\newtheorem{thm}{Theorem}[section]
\newtheorem{lem}[thm]{Lemma}
\newtheorem{prop}[thm]{Proposition}
\theoremstyle{definition}
\newtheorem{defn}{Definition}[section]
\theoremstyle{remark}
\newtheorem{rem}{Remark}[section]
\numberwithin{equation}{section}
\newcommand{\R}{\mathbb{R}} \newcommand{\mathR}{\mathbb{R}}
\newcommand{\Rn}{{\mathR^n}}
\newcommand{\calF}{\mathcal{F}}
\newcommand{\calR}{\mathcal{R}}
\newcommand{\scrS}{\mathscr{S}}
\newcommand{\supp}{\mathop{\rm supp}}
\newcommand{\diam}{\mathop{\rm diam}}
\newcommand{\divr}{\mathop{\rm div}}
\newcommand{\Forall}{{~\forall\,}}
\newcommand{\Exists}{{~\exists\,}}
\newcommand{\st}{\textrm{~s.t.~}}
\newcommand{\aee}{\textrm{~a.e.~}}
\newcommand{\ass}{\textrm{~a.s.~}}
\newcommand{\comment}[1]{}
\newcommand{\dif}[1]{\,\mathrm{d}{#1}} 
\newcommand{\nrm}[2][]{ \| {#2} \|_{#1}} 
\newcommand{\Nrm}[2][]{ \big\| {#2} \big\|_{#1}} 
\newcommand{\agl}[1][\cdot]{ \langle {#1} \rangle} 
\newcommand{\Agl}[1][\cdot]{ \big\langle {#1} \big\rangle} 
\newcommand{\Rk}{{\mathcal{R}_{k}}}
\title[Inverse problem for random Schr\"odinger equation]{Inverse problem for a random Schr\"odinger equation with unknown source and potential}
\author{Hongyu Liu}
\address{Department of Mathematics, City University of Hong Kong, Hong Kong SAR, China}
\email{hongyu.liuip@gmail.com, hongyliu@cityu.edu.hk}
\author{Shiqi Ma}
\address{School of Mathematics, Jilin University, Changchun, China}
\email{mashiqi@jlu.edu.cn, mashiqi01@gmail.com}
\begin{document}

\begin{abstract}

	In this paper, we study an inverse scattering problem associated with the time-harmonic Schr\"odinger equation where both the potential and the source terms are unknown. The source term is assumed to be a generalised Gaussian random distribution of the microlocally isotropic type, whereas the potential function is assumed to be deterministic. The well-posedness of the forward scattering problem is first established in a proper sense. It is then proved that the rough strength of the random source can be uniquely recovered, independent of the unknown potential, by a single realisation of the passive scattering measurement. In addition to the use of a single sample of the passive measurement for two unknowns, another significant feature of our result is that there is no geometric restriction on the supports of the source and the potential: they can be separated, or overlapped, or one containing the other. 
	
%
%

	\medskip

	\noindent{\bf Keywords:}~~random Schr\"odinger equation, inverse scattering, microlocally isotropic Gaussian distribution, single realisation, ergodicity, pseudo-differential operators
	
	{\noindent{\bf 2010 Mathematics Subject Classification:}~~35J10, 35R60, 60H15, 35R30}
	
\end{abstract}

\maketitle

\section{Introduction} \label{sec:intro-MLSchroEqu2018}

\subsection{Statement of the main results}

We are mainly concerned with the quantum scattering problem governed by the following time-harmonic Schr\"odinger equation (cf.~\cite{eskin2011lectures,griffiths2016introduction})
\begin{subequations} \label{eq:1-MLSchroEqu2018}
	\begin{numcases}{}
	\displaystyle{ (-\Delta-E+V(x)) u(x, \sqrt{E}, \omega)= f(x,\omega), \quad x\in\mathbb{R}^3, } \label{eq:1a-MLSchroEqu2018} \medskip\\
	\displaystyle{ \lim_{r\rightarrow\infty} r\left(\frac{\partial u}{\partial r}-\mathrm{i}\sqrt{E} u \right)=0,\quad r:=|x|. } \label{eq:1c-MLSchroEqu2018}
	\end{numcases}
\end{subequations}
In \eqref{eq:1a-MLSchroEqu2018}--\eqref{eq:1c-MLSchroEqu2018}, $u$ is the scattered wave field generated by the interaction of the source $f$ and the scattering potential $V$, and $E\in\mathbb{R}_+$ signifies the energy level. We write $k:=\sqrt{E}$, namely $E=k^2$, which can be regarded as the wavenumber for the time-harmonic wave scattering. $\omega$ in \eqref{eq:1a-MLSchroEqu2018} is the random sample belonging to $\Omega$ with $(\Omega,\mathcal F, \mathbb P)$ being a complete probability space.
The limit \eqref{eq:1c-MLSchroEqu2018} is known as the Sommerfeld radiation condition (SRC) (cf.~\cite{colton2012inverse}), which holds uniformly in the angular variable $\hat x:=x/|x|\in\mathbb{S}^2$  that characterizes the outgoing nature of the scattered wave field $u$.

In our study, $V$ is assumed to be a deterministic smooth function,
and $f$ is assumed to be a compactly supported generalised Gaussian random distribution of the microlocally isotropic type (cf.~\cite{caro2016inverse,Lassas2008}), which is rigorously characterised as follows for the self-containedness of our study.
It means that $f(\cdot,\omega)$ is a random distribution, and the mapping 
\begin{equation*}
\omega \in \Omega \ \mapsto \ \agl[f(\cdot,\omega),\varphi] \in \mathbb C,\quad \varphi\in\scrS(\Rn),
\end{equation*}
is a Gaussian random variable whose probabilistic measure depends on the test function $\varphi$. Here and also in what follows, $\scrS(\Rn)$ stands for the Schwartz space.
From this explanation we see the convolution $f(\cdot,\omega) * \varphi(x)$ is measurable with respect to the canonical product measure on the product space $\R^3 \times \Omega$.
Since both $\agl[f(\cdot,\omega),\varphi]$ and $\agl[f(\cdot,\omega),\psi]$ are random variables for $\varphi$, $\psi \in \scrS(\Rn)$,
from a statistical point of view, the covariance between these two random variables,
\begin{equation} \label{eq:CovDef-MLSchroEqu2018}
\mathbb E_\omega \big( \agl[\overline{f(\cdot,\omega) - \mathbb{E} (f(\cdot,\omega))},\varphi] \agl[f(\cdot,\omega) - \mathbb{E} (f(\cdot,\omega)),\psi] \big),
\end{equation}
can be understood as the covariance of $f$, where $\mathbb E_\omega$ means to take expectation on the argument $\omega$. 
Formula \eqref{eq:CovDef-MLSchroEqu2018} defines an operator $\mathfrak C_f$,
\[
\mathfrak C_f \colon \varphi \in \scrS(\Rn) \ \mapsto \ \mathfrak{C}_f (\varphi) \in \scrS'(\Rn),
\]
in a way that
\(\mathfrak C_f (\varphi) \colon \psi \in \scrS(\Rn) \ \mapsto \ (\mathfrak C_f (\varphi))(\psi) \in \mathbb C\)
where
\[
(\mathfrak C_f (\varphi))(\psi) := \mathbb E_\omega \big( \agl[\overline{f(\cdot,\omega) - \mathbb{E} (f(\cdot,\omega))},\varphi] \agl[f(\cdot,\omega) - \mathbb{E} (f(\cdot,\omega)),\psi] \big).
\]
The operator $\mathfrak{C}_f$ is called the covariance operator of $f$.

\begin{defn} \label{defn:migr-MLSchroEqu2018}
	A generalized Gaussian random distribution $f$ on $\R^3$ is called microlocally isotropic with rough order $-m$ and rough strength $\mu(x)$ in a bounded domain $D$, if the following conditions hold:
	\begin{enumerate}
		\item the expectation $\mathbb E (f)$ is in $\mathcal{C}_c^\infty(\R^3)$ with $\supp \mathbb E (f) \subset D$;
		
		\item $f$ is supported in $D$ a.s. (namely, almost surely);
		
		\item the covariance operator $\mathfrak C_f$ is a classical pseudodifferential operator of order $-m$;
		
		\item $\mathfrak C_f$ has a principal symbol of the form $\mu(x)|\xi|^{-m}$ with $\mu \in \mathcal{C}_c^\infty(\R^3;\R)$, $\supp \mu \subset D$ and $\mu(x) \geq 0$ for all $x \in \R^3$.
	\end{enumerate}
\end{defn}

Though Definition \ref{defn:migr-MLSchroEqu2018} is given in three dimensions, it can be easily generalize to any dimensions.
In fact, we can provide two illustrative examples on microlocally isotropic Gaussian random distributions as follows. 
The first one is
\(
f(x,\omega)
= \sqrt{\mu(x)} (I - \Delta)^{-m/4} W(x,\omega),
\)
where $W$ signifies a three-dimensional spacial Gaussian white noise \cite[Example 2.2]{caro2016inverse}.
$W$ satisfies $\mathbb E_\omega \big( G(x,\omega) \big) = 0$ and $\mathbb E_\omega \big( W(x,\omega) G(y,\omega) \big) = \delta(x-y)$.
In two dimensions, a sample of $W(x,\omega)$, i.e., $x \mapsto G(x,\omega_0)$ for a certain fixed $\omega_0$, can be seen as a very rough surface.
The pseudodifferential operator $(I - \Delta)^{-m/4}$ acts as a mollifier to smooth out this rough surface, and the smoothing effect can be tuned through the parameter $m$.
The second example is
\(
f(x,\omega)
= \sqrt{\mu(x)} X_H(x,\omega),
\)
where $X_H \colon \R^3 \times \Omega \to \R$ signifies a multidimensional fractional Brownian motion in $\R^3$ with Hurst index $H \in (0,1)$ \cite[Example 2.3]{caro2016inverse}.
The Hurst exponent describes the raggedness of the resultant motion.
A higher value of $H$ leads to a smoother motion.
For a microlocally isotropic Gaussian random distribution $f$, the $\mu(x)$ induced by the covariance operator $\mathfrak C_f$ has a role similar to the covariance function of $f$.
We shall provide more explanations in what follows in Section \ref{subsec:Prel-MLSchroEqu2018}.

In what follows, we abbreviate a microlocally isotropic Gaussian random distribution as an {\it m.i.g.r.}~function.
Let $f$ be an m.i.g.r.~function. We consider the corresponding forward and inverse scattering problems associated with the Schr\"odinger equation \eqref{eq:1a-MLSchroEqu2018}--\eqref{eq:1c-MLSchroEqu2018}. For the forward scattering problem, we shall show that there exists a well-defined scattering map in a proper sense as follows:
\begin{equation*} 
(f, V) \ \rightarrow \ u^\infty(\hat x, k, \omega),\ \ \hat x \in \mathbb{S}^2,\, k\in\mathbb{R}_+,\, \omega \in \Omega,
\end{equation*}
where $u^\infty$ is a random distribution on the unit sphere and is called the far-field pattern. That is, for a given pair of $(f, V)$, by solving the forward scattering system \eqref{eq:1a-MLSchroEqu2018}--\eqref{eq:1c-MLSchroEqu2018}, one can obtain the far-field pattern in a proper sense. It is noted that the far-field pattern is generated through the interaction of the source $f$ and the scattering potential $V$, and hence it carries the information of $f$ and $V$. The inverse scattering problem is concerned with recovering the unknown $f$ or/and $V$ by knowledge of the far-field pattern, namely,
\begin{equation}\label{eq:ip-MLSchroEqu2018}
\big\{u^\infty(\hat x, k, \omega) \,;\, \hat x \in \mathbb{S}^2, k\in\mathbb{R}_+,\, \omega \in \Omega \big\} \ \rightarrow \ (f, V). 
\end{equation}
It is noted that the measured far-field pattern in \eqref{eq:ip-MLSchroEqu2018} is produced by the unknown source, and it is referred to as the passive measurement in the scattering theory. 
This is in difference to the active measurement, where one exerts certain known wave sources to generate the scattered waves in order to recover the unknown objects.

In what follows, we shall impose the following mild regularity assumption on the potential $V$:
\begin{equation} \label{eq:asp-MLSchroEqu2018}
V \in C^{5}(\R^3),\, V \in L_{3/2+\epsilon}^2(\R^3) \text{~and~} \partial^\alpha V \in L_{1/2+\epsilon}^{\infty}(\R^3) \text{~for~} \forall \alpha \colon |\alpha| \leq 2.
\end{equation}	
It is emphasized that the above $C^5$-regularity requirement is mainly a technical condition, which shall be needed in our subsequent stationary phase argument (cf. \eqref{eq:g10-MLSchroEqu2018}).
For the inverse scattering problem, we shall prove
\begin{thm} \label{thm:Unisigma-MLSchroEqu2018}
	Let $f$ be an m.i.g.r.~distribution  such that $\supp(f)\subset D_f$ where $D_f$ is a bounded domain in $\mathbb{R}^3$ and $V$ satisfies \eqref{eq:asp-MLSchroEqu2018}.
	Let $\mu$ be the rough strength of $f$. Suppose that $f$ is of order $-m$ with 
	$2 < m < 3$.
	Then the far-field data $u^\infty(\hat x, k, \omega)$ for all $(\hat x, k)\in\mathbb{S}^2\times\mathbb{R}_+$ and a fixed $\omega\in\Omega$ can uniquely recover $\mu$ almost surely, independent of $V$.
	Moreover, a reconstruction formula is proposed in \eqref{eq:sigmaHatRecSingle-MLSchroEqu2018}.
\end{thm}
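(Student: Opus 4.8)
The plan is to reduce the single-realization, $V$-contaminated recovery to a clean two-frequency correlation in which $\mu$ surfaces as the principal symbol of $\mathfrak C_f$, and then to invoke ergodicity in the wavenumber $k$ to pass from the ensemble average to the observed sample. First I would combine the forward well-posedness with a Born-type expansion of the resolvent $R_V=(-\Delta-k^2+V)^{-1}$ about the free resolvent $R_0$ to write the far-field as a distributional pairing
\[
u^\infty(\hat x,k,\omega)=c\,\agl[f(\cdot,\omega),\Phi_{\hat x,k}],
\]
where $\Phi_{\hat x,k}$ is the total field for the incident plane wave $e^{-\mathrm{i}k\hat x\cdot y}$ scattered by $V$, and $c$ is an explicit, $k$-independent dimensional constant. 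Since $V$ is deterministic, smooth and decaying as in \eqref{eq:asp-MLSchroEqu2018}, the high-frequency geometric-optics asymptotics give $\Phi_{\hat x,k}(y)=e^{-\mathrm{i}k\hat x\cdot y}\bigl(1+O(k^{-1})\bigr)$, so that
\[
u^\infty(\hat x,k,\omega)=c\,\hat f(k\hat x,\omega)+E_V(\hat x,k,\omega),
\]
with the leading term $V$-independent and the remainder $E_V$ smaller by a factor $k^{-1}$ in the relevant $L^2(\Omega)$ sense; the stationary-phase and pseudodifferential machinery enters precisely here, to control $E_V$ uniformly in $k$.

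The second step is the ensemble correlation at two wavenumbers $k$ and $k+s$ sharing a common direction $\hat x$. Using that the kernel of $\mathfrak C_f$ is the Schwartz kernel of a classical $\Psi$DO with principal symbol $\mu(x)|\xi|^{-m}$, a direct computation gives, for fixed $\hat x\in\mathbb S^2$ and fixed shift $s>0$,
\[
\mathbb E_\omega\big[\overline{c\,\hat f(k\hat x)}\,c\,\hat f((k+s)\hat x)\big]=c^2 k^{-m}\hat\mu(s\hat x)+O(k^{-m-1}),
\]
the crucial point being that the frequency separation $\eta-\xi=s\hat x$ is \emph{independent of $k$}, so that $\mu$ is probed through $\hat\mu$ at the single fixed covector $s\hat x$. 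Combining this with the first step (and noting that the smooth mean $\mathbb E(f)\in\calC_c^\infty$ contributes only a rapidly decaying term, hence is harmless under the weight below), I obtain
\[
\mathbb E_\omega\big[\overline{u^\infty(\hat x,k)}\,u^\infty(\hat x,k+s)\big]=c^2 k^{-m}\hat\mu(s\hat x)+o(k^{-m}),\qquad k\to\infty,
\]
with the entire $V$-dependence absorbed into the $o(k^{-m})$ error. This is the analytic heart of the ``independent of $V$'' assertion.

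The third and hardest step upgrades this ensemble identity to a single realization via ergodicity in $k$. I would set
\[
A_K(\hat x,s,\omega):=\frac{1}{K}\int_1^{K} k^{m}\,\overline{u^\infty(\hat x,k,\omega)}\,u^\infty(\hat x,k+s,\omega)\dif k,
\]
so that the previous step yields $\mathbb E_\omega[A_K]\to c^2\hat\mu(s\hat x)$. Because $V$ is deterministic, $u^\infty(\hat x,k,\cdot)$ is a linear functional of the Gaussian source and hence the family $\{u^\infty(\hat x,k,\cdot)\}_k$ is a complex Gaussian process; the fourth moments entering $\mathrm{Var}(A_K)$ therefore factor, via Wick's formula, into products of the two-point function $\mathbb E_\omega[\overline{u^\infty(\hat x,k)}u^\infty(\hat x,k')]=c^2(k')^{-m}\hat\mu((k'-k)\hat x)+\cdots$. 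Since $\mu\in\calC_c^\infty$, its Fourier transform $\hat\mu$ is Schwartz, so this two-point function decays rapidly in $|k-k'|$; this decorrelation forces $\mathrm{Var}(A_K)\to 0$. A Chebyshev bound along a polynomial sequence $K_n$, Borel--Cantelli, and an $L^2(\Omega)$ continuity-in-$k$ estimate for $u^\infty$ (where the range $2<m<3$ is used, to secure finiteness of the weighted correlations and almost-sure continuity of $k\mapsto u^\infty(\hat x,k,\omega)$) then promote the convergence to hold almost surely, simultaneously for all $(\hat x,s)$ in a countable dense set, and finally for all $(\hat x,s)$ by continuity.

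It remains only to read off $\mu$. The almost-sure limit of $A_K$ equals $c^2\hat\mu(s\hat x)$; letting $\hat x$ range over $\mathbb S^2$ and $s$ over $(0,\infty)$ recovers $\hat\mu$ on all of $\R^3\setminus\{0\}$, while the diagonal $s\to 0^+$ supplies $\hat\mu(0)=\int_{\R^3}\mu$. As $c$ is an explicit known constant, Fourier inversion then determines $\mu$ uniquely and almost surely, using no knowledge of $V$. I expect the principal obstacle to be the third step: the variance decay requires fourth-moment bounds that are uniform in $k\in[1,\infty)$, which in turn demand that both the $V$-dependent remainder $E_V$ and the subprincipal symbol contributions be controlled in $L^4(\Omega)$ uniformly in $k$ — exactly the estimates that the deterministic hypothesis on $V$ makes available, and that play the role formerly served by the restrictive geometric condition in the earlier work.
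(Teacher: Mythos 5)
Your proposal is correct and follows essentially the same route as the paper's own proof: a Neumann/Born expansion of the far field into the leading pairing $\langle f,e^{-\mathrm{i}k\hat x\cdot(\cdot)}\rangle$ plus a $V$-dependent remainder controlled by resolvent estimates and stationary phase, extraction of $(2\pi)^{3/2}\widehat{\mu}(\tau\hat x)k^{-m}$ from the two-frequency correlation via the principal symbol of $\mathfrak{C}_f$, Wick/Isserlis fourth-moment factorization with the rapid decay of $\widehat{\mu}$ to kill the variance, Chebyshev and Borel--Cantelli along polynomially growing $K_j$, removal of the mean part through its $\mathcal{O}(k^{-2})$ far-field decay, and Fourier inversion (this is exactly the content of the paper's Lemmas on $F_0$, $F_1$ and its Theorems 4.1--4.3). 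The one point where you deviate is the final passage from a countable dense set of $(\hat x,s)$ to all parameters ``by continuity,'' which as stated needs an equicontinuity-in-$j$ estimate you do not have (the paper instead uses a Fubini argument yielding convergence for a.e.\ $\tau\hat x\in\mathbb{R}^3$ with an $\omega$-dependent null set $S_\omega$); this is harmless for the theorem, since $\widehat{\mu}$ is continuous and thus already determined by its values on the dense set.
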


\begin{rem}
	Theorem~\ref{thm:Unisigma-MLSchroEqu2018} indicates that a single realisation of the passive scattering measurement can uniquely recover the rough strength of the unknown source, independent of the scattering potential and the expectation of the source.
	In fact, our arguments in what follows in proving the theorem actually yield an explicit formula in recovering $\mu$ by the given far-field data (cf.~formula \eqref{eq:sigmaHatRecSingle-MLSchroEqu2018}).
	It is emphasized we do not assume $V$ to be compactly supported.
	This is in sharp difference to our earlier study \cite{HLSM2019both}, where $V$ was also assumed to be compactly supported, and $\supp V$ and $\supp f$ are assumed to be well separated in the sense that their convex hulls stay a positive distance away from each other. We shall discuss more about this point in Section 1.2. 
\end{rem}

\begin{rem}\label{rem:1.2}
	In Theorem~\ref{thm:Unisigma-MLSchroEqu2018}, we only consider the recovery of the rough strength of the source, which is independent of the expectation of the source and the scattering potential, both of them being unknown. It is pointed out that in essence one can also recover the expectation of the source, but would need to make use of the full-realisation of the passive scattering measurement. Moreover, if active scattering measurement is further used, one may also be able to recover the potential by following similar arguments as in \cite{HLSM2019both}. However, in our view, the result presented in Theorem~\ref{thm:Unisigma-MLSchroEqu2018} is the most significant advancement in understanding the inverse scattering problem associated with the random Schr\"odinger equation \eqref{eq:1a-MLSchroEqu2018}--\eqref{eq:1c-MLSchroEqu2018}. 
\end{rem}

\subsection{Discussion of our results and literature review}

Inverse scattering theory is a central topic in the mathematical study of inverse problems and on the other hand, it is the fundamental basis for many industrial and engineering applications, including radar/sonar, geophysical exploration and medical imaging. 
It is concerned with the recovery of unknown/inaccessible scattering objects by knowledge of the associated wave scattering measurements away from the objects. 
The scattering object could be a passive inhomogeneous medium or an active source. 
The scattering measurement might be generated by the underlying unknown source, referred to as the passive measurement, or by exerting a certain known wave field, referred to as the active measurement. 
Both the inverse medium scattering problem and the inverse source scattering problem in the deterministic settings have been intensively and extensively investigated in the literature; 
see e.g. \cite{Bsource, BL2018, ClaKli, Deng2019Onan, Klibanov2013, Zhang2015, WangGuo17, colton2012inverse, uhlmann2013inverse} for some recent related studies and the references cited therein.
The simultaneous recovery of an unknown source as well as the material parameter of an inhomogeneous medium by the associated passive measurement was considered \cite{KM1,liu2015determining}, which arises in the photoacoustic and thermoacoustic tomography as an emerging medical imaging modality. 
Similar inverse problems were also considered in \cite{Deng2019onident, Deng2020onident} associated with the magnetohydrodynamical system and in \cite{Deng2019Onan} associated with the Maxwell system that are related to the geomagnetic anomaly detection and the brain imaging, respectively.
Inverse scattering problems in the random settings have also received considerable attentions in the literature; see e.g. \cite{LassasA, Lassas2008, blomgren2002, borcea2002, borcea2006,
	bao2016inverse, Lu1, Yuan1, LiHelinLiinverse2018, HLSM2019determining, HLSM2019both} and the references therein.
In \cite{SM2020onrecent}, the second author of the present paper gives a review on recent progress of single-realization recoveries of random Schr\"odinger systems, and discuss some key ideas in \cite{Lassas2008} and \cite{HLSM2019both}.

Among the aforementioned studies of the random inverse problems, we are particularly interested in the case where a \emph{single} random sample is used to recover the unknowns.
Papanicolaou \cite{blomgren2002, borcea2002, borcea2006} studied the single realization recoveries that are more engineering-oriented.
In \cite{LassasA, Lassas2008}, Lassas et.~al.~considered the inverse scattering problem for the two-dimensional random Schr\"odinger system, and recovered the rough strength of the potential by using the near-field data under a single random sample.
In \cite{HLSM2019determining, HLSM2019both}, we studied the random Schr\"odinger system in a different setting and recovered the rough strength under a single random sample. In \cite{LiHelinLiinverse2018}, Li et.~al.~considered the inverse scattering problem of recovering a random source under a single random sample. It is emphasized that the recovery of the potential is comparably more challenging than the recovery of the source. In this paper, we shall consider the case that both the source and the potential are unknown, making the corresponding study radically more challenging.

Recently, the m.i.g.r.~model has been under an intensive study; see \cite{Lassas2008,LassasA,caro2016inverse,LiHelinLiinverse2018,LiLiinverse2018} and the references cited therein.
Two important parameters of the m.i.g.r.~distribution are its rough order and {rough strength}. Roughly speaking, the rough order determines the degree of spatial roughness of the m.i.g.r., and the rough strength indicates its spatial correlation length and intensity.
The rough strength also captures the micro-structure of the object in interest \cite{Lassas2008}.

The current article is a continuation of our study in two recent works \cite{HLSM2019determining,HLSM2019both} on the inverse scattering problem \eqref{eq:ip-MLSchroEqu2018} associated with the Schr\"odinger system \eqref{eq:1a-MLSchroEqu2018}- \eqref{eq:1c-MLSchroEqu2018}. The major connections and differences among those studies can be summarised as follows.
\begin{enumerate}
	\item In \cite{HLSM2019determining}, we considered the case that the random part of the source is a spatial Gaussian white noise, whereas the potential term is deterministic. It is proved that a single realisation of the passive scattering measurement can uniquely recover the variance of the random source, independent of the potential. However, in this paper, we derive a similar unique recovery result, but for the random source being a much more general m.i.g.r. distribution. As shall be seen in our subsequent analysis, the m.i.g.r source makes the corresponding analysis radically much more challenging.
	
	\item In \cite{HLSM2019both}, both the source and potential terms were assumed to be random of the m.i.g.r.~type. It was proved that a single realisation of the passive scattering measurement can uniquely recover the rough strength of the source, independent of the potential. However, in order to achieve such a unique recovery result, a restrictive geometric condition is critically required that the convex hulls of the supports of the source and potential are well separated. In this paper, we completely remove this geometric condition without imposing any assumption on the bounded supports of the source and the potential. As shall be seen in our subsequent study, the removal of this geometric condition makes the relevant analysis much more challenging and technical, and we develop novel mathematical techniques to handle this general geometric situation. On the other hand, it is remarked that the cost of removing this restrictive geometric condition is that we need to require the unknown potential to be deterministic. According to our intricate and subtle estimates in establishing the determination results in \cite{HLSM2019both} and Theorem~\ref{thm:Unisigma-MLSchroEqu2018} in the present paper, we believe that such a cost is unobjectionable. 
	
	\item In both \cite{HLSM2019determining} and \cite{HLSM2019both}, it was shown that if full scattering measurement is used, namely both passive and active measurements are used, then both the source and the potential can be recovered. In this paper, we only consider the recovery of the source by using the associated passive measurement. Nevertheless, it is remarked that if full measurement is used, then one can also establish the recovery of both the source and the potential by following similar arguments to those in \cite{HLSM2019determining} and \cite{HLSM2019both}; see Remark~\ref{rem:1.2} as well. 
	
\end{enumerate}

The rest of the paper is organized as follows. In Section \ref{sec:2-MLSchroEqu2018}, we present the well-posedness of the direct scattering problem. 
Section \ref{sec:AsyEst-MLSchroEqu2018} establishes several critical asymptotic estimates.
Finally in Section \ref{sec:VarRec-MLSchroEqu2018} we prove the unique recovery of the rough strength of the random source.

\section{Well-posedness of the direct problem} \label{sec:2-MLSchroEqu2018}

In this section, the unique existence of a {\it distributional solution} shall be established to the random Schr\"odinger system \eqref{eq:1-MLSchroEqu2018}.

We first fix some notations that shall be used throughout the rest of the paper. We write $\mathcal{L}(\mathcal A, \mathcal B)$ to denote the set of all the bounded linear mappings from a normed vector space $\mathcal A$ to a normed vector space $\mathcal B$. 
For any mapping $\mathcal K \in \mathcal{L}(\mathcal A, \mathcal B)$, we denote its operator norm as $\nrm[\mathcal{L}(\mathcal A, \mathcal B)]{\mathcal K}$. 
We use $C$ and its variants, such as $C_D$, $C_{D,f}$, to denote some generic constants whose particular values may change line by line. 
For two quantities $\mathbf{P}$ and $\mathbf{Q}$, we write $\mathbf{P}\lesssim \mathbf{Q}$ to signify $\mathbf{P}\leq C \mathbf{Q}$ and $\mathbf{P} \simeq \mathbf{Q}$ to signify $\widetilde{C}\mathbf{Q}\leq \mathbf{P} \leq C \mathbf{Q}$, for some generic positive constants $C$ and $\widetilde{C}$.
We may write ``almost everywhere'' as~``a.e.''~and ``almost surely'' as~``a.s.''~for short. 
We use $|\mathcal S|$ to denote the Lebesgue measure of any Lebesgue-measurable set $\mathcal S$.
In dimension $n \geq 1$, the Fourier transform and its inverse of a function $\varphi$ are defined respectively as
\begin{align*} 
	& \calF \varphi (\xi) = \widehat \varphi(\xi) := (2\pi)^{-n/2} \int_\Rn e^{-{\textrm{i}}x\cdot \xi} \varphi(x) \dif x, \\
	& \calF^{-1} \varphi (\xi) := (2\pi)^{-n/2} \int_\Rn e^{{\textrm{i}}x\cdot \xi} \varphi(x) \dif x.
\end{align*}
Write $\agl[x] := (1+|x|^2)^{1/2}$ for $x \in \Rn$, $n\geq 1$. We introduce the following weighted $L^p$-norm and the corresponding function space over $\Rn$ for any $\delta \in \R$,
\begin{equation} \label{eq:WetdSpace-MLSchroEqu2018}
	\begin{aligned}
		\nrm[L_\delta^p(\Rn)]{\varphi} := \ & \nrm[L^p(\Rn)]{\agl[\cdot]^{\delta} \varphi(\cdot)} = \big( \int_{\Rn} \agl[x]^{p\delta} |\varphi|^p \dif{x} \big)^{\frac 1 p}, \\
		L_\delta^p(\Rn) := \ & \{\, \varphi \in L_{loc}^1(\Rn) \,;\, \nrm[L_\delta^p(\Rn)]{\varphi} < +\infty \,\}.
	\end{aligned}
\end{equation}
We also define $L_\delta^p(S)$ for any subset $S$ in $\Rn$ by replacing $\Rn$ in \eqref{eq:WetdSpace-MLSchroEqu2018} with $S$. 
In what follows, we may write $L_\delta^2(\R^3)$ as $L_\delta^2$ for short without ambiguities.

Next, we present some basics about the random model and some other preliminaries for the subsequent use.

\subsection{Random model and preliminaries} \label{subsec:Prel-MLSchroEqu2018}

The following lemma shows the precise relationship between the regularity of $h$ and its rough order.

\begin{lem} \label{lem:migrRegu-MLSchroEqu2018}
	Let $f$ be a m.i.g.r.~of rough order $-m$ in $D_f$ in dimension $n \geq 1$. Then $f \in H^{s,p}(\Rn)$ almost surely for any $1 < p < +\infty$ and $s < (m-n)/2$.
\end{lem}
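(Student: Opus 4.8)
The plan is to estimate the Sobolev regularity of $h$ through its covariance operator, exploiting that $h$ is Gaussian so that all moments are controlled by the second moment. The key quantity is $\mathbb{E}\nrm[H^{s,p}]{h}^p$; if I can show this is finite, then $h \in H^{s,p}(\Rn)$ almost surely. Since for a Gaussian field the finiteness of such an expectation reduces (via Fubini and the $L^p$-boundedness of the relevant operators) to the finiteness of $\int \agl[\xi]^{2s} \mathbb{E}|\widehat{(J^s h)}(\xi)|^2$-type integrals, the essential object is the symbol of $\mathfrak{C}_h$. First I would reduce to the mean-zero case, since by Definition~\ref{defn:migr-MLSchroEqu2018}(1) the expectation $\mathbb{E}(h) \in \calC_c^\infty(\Rn)$ is smooth and compactly supported, hence lies in every $H^{s,p}$; so it suffices to treat $h - \mathbb{E}(h)$, whose covariance operator is still $\mathfrak{C}_h$, a classical $\Psi$DO of order $-m$ with principal symbol $\mu(x)|\xi|^{-m}$.

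Next I would fix a cutoff $\chi \in \calC_c^\infty(\Rn)$ with $\chi \equiv 1$ on $D_h$, so that $h = \chi h$ almost surely by the support property in Definition~\ref{defn:migr-MLSchroEqu2018}(2). Writing $J^s := (1-\Delta)^{s/2}$ for the Bessel potential, the goal is to show $J^s h \in L^p$ a.s. I would compute the second moment $\mathbb{E} |J^s h(x)|^2$ pointwise: this equals the Schwartz kernel of the operator $J^s \mathfrak{C}_h (J^s)^*$ evaluated on the diagonal, and since $\mathfrak{C}_h$ has order $-m$, the composed operator $J^s \mathfrak{C}_h J^s$ has order $2s - m$. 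The diagonal value of the kernel of a classical $\Psi$DO of order $2s-m$ is finite and locally bounded precisely when $2s - m < -n$, i.e. $s < (m-n)/2$, which is exactly the hypothesized range. This gives $\sup_{x} \mathbb{E}|J^s h(x)|^2 < \infty$ with the supremum over the compact support region.

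To pass from the pointwise second-moment bound to $L^p$ integrability, I would use Gaussianity: since $J^s h(x)$ is (for each $x$) a centered Gaussian, its $p$-th moment is bounded by a dimensional constant times $(\mathbb{E}|J^s h(x)|^2)^{p/2}$. Therefore
\[
\mathbb{E}\nrm[L^p]{J^s h}^p = \int \mathbb{E}|J^s h(x)|^p \dif{x} \lesssim \int \big( \mathbb{E}|J^s h(x)|^2 \big)^{p/2} \dif{x},
\]
and because the integrand is supported in a neighborhood of the compact set $\supp \chi$ and is uniformly bounded there, the right-hand side is finite for every $1 < p < \infty$. Hence $\mathbb{E}\nrm[H^{s,p}]{h}^p < \infty$, which forces $h \in H^{s,p}(\Rn)$ almost surely.

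The main obstacle I anticipate is making the $\Psi$DO diagonal-kernel computation rigorous at the borderline: I must verify that the order-$(2s-m)$ operator $J^s \mathfrak{C}_h J^s$ really does have a continuous kernel on the diagonal under the strict inequality $2s-m < -n$, and track that the full symbol (not merely the principal part $\mu(x)|\xi|^{-m}$) contributes only lower-order terms whose diagonal kernels are likewise finite. Since $\mathfrak{C}_h$ is \emph{classical}, its symbol admits an asymptotic expansion in homogeneous terms of decreasing order, and each term composed with the two factors $J^s$ yields an operator whose order stays below $-n$; summing the expansion and controlling the remainder via standard $\Psi$DO calculus completes the estimate. Care is also needed because $p$ is allowed to be close to $1$ or large, but the Gaussian moment comparison handles all finite $p$ uniformly once the second moment is controlled.
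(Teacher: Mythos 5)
Your strategy is essentially the correct one, and it matches the source of this result: the paper does not prove the lemma itself but simply cites \cite{caro2016inverse}*{Proposition 2.4}, whose proof proceeds exactly along your lines---reduce to the mean-zero field, use the Gaussian moment comparison $\mathbb{E}|X|^p \lesssim_p (\mathbb{E}|X|^2)^{p/2}$ to bound $\mathbb{E}\nrm[L^p]{J^s h}^p$ by $\int (\mathbb{E}|J^s h(x)|^2)^{p/2}\dif{x}$, and identify $\mathbb{E}|J^s h(x)|^2$ with the diagonal value of the Schwartz kernel of $J^s \mathfrak{C}_h J^s$, a pseudodifferential operator of order $2s-m<-n$, whose kernel is then continuous and bounded.

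There is, however, one step that is wrong as written, though fixable. You justify the finiteness of $\int (\mathbb{E}|J^s h(x)|^2)^{p/2}\dif{x}$ by claiming the integrand is \emph{supported} in a neighborhood of $\supp \chi$. It is not: $J^s$ is a nonlocal operator (convolution with a Bessel-type kernel), so $J^s h$ does not inherit compact support from $h$, and uniform boundedness alone cannot make an integral over all of $\Rn$ converge. What actually saves the argument is decay rather than support: the kernel of $\mathfrak{C}_h$ is supported in $D_h \times D_h$ while the kernel of $J^s$ decays exponentially off the diagonal, so the diagonal kernel of $J^s \mathfrak{C}_h J^s$ decays rapidly as $\dist(x, D_h) \to \infty$, which gives integrability of $(\mathbb{E}|J^s h(x)|^2)^{p/2}$ for every finite $p$. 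A second, more minor point: evaluating $J^s h$ pointwise and writing $\mathbb{E}|J^s h(x)|^2$ presupposes that $J^s h$ is a measurable function, which is part of what is being proved; the standard way to make this rigorous is to mollify, $h_\epsilon = h * \phi_\epsilon$, obtain the $L^p$-moment bound uniformly in $\epsilon$, and pass to the limit. With these two repairs your proof is complete and coincides with the argument in \cite{caro2016inverse}.
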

\begin{proof}
	See \cite[Proposition 2.4]{caro2016inverse}.
\end{proof}

\smallskip

Let $f$ be as in Lemma \ref{lem:migrRegu-MLSchroEqu2018} in dimension $n \geq 1$.
By the Schwartz kernel theorem \cite[Theorem 5.2.1]{hormander1985analysisI}, there exists a kernel $K_f(x,y)$ with $\supp K_f \subset D_f \times D_f$ such that
\begin{align} 
(\mathfrak C_f \varphi)(\psi) 
& = \mathbb E_\omega ( \agl[\overline{f(\cdot,\omega) - \mathbb{E} (f(\cdot,\omega))},\varphi] \agl[f(\cdot,\omega) - \mathbb{E} (f(\cdot,\omega)),\psi]) \nonumber\\
& = \iint K_f(x,y) \varphi(x) \psi(y) \dif x \dif y, \label{eq:CK-MLSchroEqu2018}
\end{align}
for all $\varphi$, $\psi \in \scrS(\Rn)$.
It is easy to show that $K_f(x,y) = \overline{K_f(y,x)}$.
Denote the symbol of $\mathfrak C_f$ as $c_f$. It can be verified the following identities hold in the distributional sense (cf. \cite{caro2016inverse}),
\begin{subequations} \label{eq:KandSymbol-MLSchroEqu2018}
	\begin{numcases}{}
	K_f(x,y) = (2\pi)^{-n} \int e^{i(x-y) \cdot \xi} c_f(x,\xi) \dif \xi, \label{eq:KtoSymbol-MLSchroEqu2018} \\
	c_f(x,\xi) = \int e^{-i\xi\cdot(x-y)} K_f(x,y) \dif y, \label{eq:SymboltoK-MLSchroEqu2018}
	\end{numcases}
\end{subequations}
where the integrals shall be understood as oscillatory integrals.
Despite the fact that $f$ usually is not a function, intuitively speaking, however, it is helpful to keep in mind the following correspondence,
\[
K_f(x,y) \sim \mathbb E_\omega \big( \overline{f(x,\omega)} f(y,\omega) \big).
\]
Moreover, we have
\[
K_f(x,y)
= \mu(x) |x-y|^{m-n} + F_\alpha(x,y),
\]
where $F_\alpha \in C^{0,\alpha}(\Rn \times \Rn)$ (cf. \cite{caro2016inverse}). Thus $\mu(x)$ plays a role similar to the covariance function of $f$.

We recall the domain $D_f$ in Theorem~\ref{thm:Unisigma-MLSchroEqu2018}. Through out the rest of the paper, for notational consistence, we let $\mathcal D$ be a bounded open  domain in $\mathbb{R}^3$ such that
\begin{equation} \label{eq:calD-MLSchroEqu2018}
\overline{D_f} \Subset \mathcal D.
\end{equation}
For a generalized Gaussian random field $f$, we define the so-called resolvent $\Rk f(x)$ as
\begin{equation} \label{eq:RkSigmaBDefn-MLSchroEqu2018}
\Rk f(x) := \agl[f, \Phi_k(x,\cdot)],
\end{equation}
where $\Phi_k(x,y) = \frac {e^{ik|x-y|}} {4\pi|x-y|}$ is the fundamental solution of the Helmholtz equation in $\R^3$, and we may abbreviate $\Phi_k$ as $\Phi$ if no ambiguity occurs.
We may also express $\Rk f(x)$ as an integral form $\int_{\R^3} \Phi_k(x,y) f(y) \dif{y}$.
The following lemma shows some preliminary properties of $\Rk f$. Note that the $\mu$ is the rough strength of $f$.

\begin{lem} \label{lemma:RkfBdd-MLSchroEqu2018}
	In dimension $n = 3$, we have $\Rk f \in L_{-1/2-\epsilon}^2$ for any $\epsilon > 0$ almost surely, and $\mathbb{E} ( \nrm[L^2(\mathcal D)]{\Rk f}) < C < +\infty$ for some constant $C$ independent of $k$.
\end{lem}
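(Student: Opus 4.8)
The plan is to handle both assertions through the centred decomposition $f = \mathbb E(f) + f_0$ with $f_0 := f - \mathbb E(f)$, and to reduce the global weighted statement to a near-field $L^2(\mathcal D)$ bound (delivered by the second assertion) together with a far-field pointwise decay estimate. Since $\mathbb E(f) \in \calC_c^\infty(\R^3)$, the field $\Rk \mathbb E(f)(x) = \int \Phi_k(x,y)\,\mathbb E(f)(y)\dif y$ is a deterministic Newtonian-type potential dominated by $(4\pi)^{-1}\int |x-y|^{-1}|\mathbb E(f)(y)|\dif y$ uniformly in $k$; this is square-integrable over the bounded set $\mathcal D$ and decays like $|x|^{-1}$ at infinity, so it is harmless for both assertions. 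Hence it suffices to control $\Rk f_0$, whose second moment is governed by the covariance operator $\mathfrak C_f$.

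For the second assertion I would first record, via the kernel representation \eqref{eq:CK-MLSchroEqu2018} of $\mathfrak C_f$ applied with $\varphi = \overline{\Phi_k(x,\cdot)}$ and $\psi = \Phi_k(x,\cdot)$, the identity
\begin{equation*}
\mathbb E\big(|\Rk f_0(x)|^2\big) = \iint K_f(y,z)\,\overline{\Phi_k(x,y)}\,\Phi_k(x,z)\dif y\dif z ,
\end{equation*}
the admissibility of these non-Schwartz, mildly singular test functions following by density since $K_f$ is locally integrable. Integrating over $x \in \mathcal D$ and exchanging the order of integration gives
\begin{equation*}
\mathbb E\big(\nrm[L^2(\mathcal D)]{\Rk f_0}^2\big) = \iint K_f(y,z)\,G_k(y,z)\dif y\dif z, \qquad G_k(y,z) := \int_{\mathcal D}\overline{\Phi_k(x,y)}\,\Phi_k(x,z)\dif x .
\end{equation*}
By Cauchy--Schwarz, $|G_k(y,z)| \le (4\pi)^{-2}\int_{\mathcal D}|x-y|^{-1}|x-z|^{-1}\dif x$ is bounded uniformly in $k$ and in $(y,z)$, using that $|x-\cdot|^{-1} \in L^2(\mathcal D)$ in $\R^3$. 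Since $\mathfrak C_f$ is a classical pseudodifferential operator of order $-m$ with $m<3$, its Schwartz kernel $K_f$ carries only the integrable diagonal singularity of type $|y-z|^{m-3}$ (with $m-3 \in (-1,0)$) and is supported in $D_f \times D_f$, so $K_f \in L^1$. The double integral is therefore finite uniformly in $k$, and Jensen's inequality yields $\mathbb E(\nrm[L^2(\mathcal D)]{\Rk f}) \le (\mathbb E\,\nrm[L^2(\mathcal D)]{\Rk f}^2)^{1/2} \le C$, which is the second assertion.

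For the first assertion, the finiteness of the above second moment already forces $\nrm[L^2(\mathcal D)]{\Rk f} < \infty$ almost surely, which controls the weighted norm on the bounded region $\mathcal D$. It remains to estimate $\Rk f$ on $\R^3 \setminus \mathcal D$. By Lemma~\ref{lem:migrRegu-MLSchroEqu2018} with $p=2$, almost surely $f \in H^{s}$ for every $s < (m-3)/2$ and $\supp f \subset D_f$; I would fix $s_0 \in ((3-m)/2,\,1/2)$, a nonempty window precisely because $m>2$, so that $f \in H^{-s_0}$ almost surely. Writing $\Rk f(x) = \agl[f, \chi\,\Phi_k(x,\cdot)]$ with $\chi$ a fixed cutoff equal to one on a neighbourhood of $D_f$, duality gives $|\Rk f(x)| \le \nrm[H^{-s_0}]{f}\,\nrm[H^{s_0}]{\chi\Phi_k(x,\cdot)}$. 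For $|x|$ large the kernel $\Phi_k(x,\cdot)$ is smooth on $\supp\chi$, and a direct estimate of its $y$-derivatives there produces $\nrm[H^{s_0}]{\chi\Phi_k(x,\cdot)} \le C_k\,|x|^{-1}$ with $C_k$ polynomial in $k$. Hence $|\Rk f(x)| \le C_k\,\nrm[H^{-s_0}]{f}\,|x|^{-1}$ for $|x|$ large, and
\begin{equation*}
\int_{\R^3 \setminus \mathcal D} \agl[x]^{-1-2\epsilon}|\Rk f(x)|^2 \dif x \lesssim C_k^2\,\nrm[H^{-s_0}]{f}^2 \int_{|x|>R_0} |x|^{-3-2\epsilon}\dif x < \infty \quad \ass,
\end{equation*}
the last integral converging in $\R^3$ for every $\epsilon>0$. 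Combining the two regions yields $\Rk f \in L_{-1/2-\epsilon}^2$ almost surely.

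The main obstacle I anticipate is the rigorous pairing of the negative-order random distribution $f$ with the singular, non-Schwartz kernel $\Phi_k(x,\cdot)$: one must simultaneously use the sharp regularity $f \in H^{-s_0}$ with $s_0 < 1/2$ (available only because $m>2$) and the borderline membership $\Phi_k(x,\cdot) \in H^{s_0}$ across its $|x-y|^{-1}$ singularity, while extracting enough $|x|^{-1}$ decay to close the weighted integral. The parallel delicacy in the second assertion is justifying the test-function admissibility and the Fubini exchange despite $\Phi_k$ being neither smooth nor decaying, where the local integrability of $K_f$ (ensured by $m<3$) and the uniform-in-$k$ boundedness of $G_k$ are the essential ingredients.
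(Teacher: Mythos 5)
Your proof is correct, but it reaches the first assertion by a genuinely different route than the paper. For the second assertion (the uniform-in-$k$ bound on $\mathbb{E}\nrm[L^2(\mathcal D)]{\Rk f}$) the two arguments essentially coincide: both reduce to a second-moment identity through the covariance kernel $K_f$ and a uniform bound on $\int_{\mathcal D}|x-y|^{-1}|x-z|^{-1}\dif{x}$; the only difference is how the diagonal singularity of $K_f$ is controlled. You invoke the standard estimate $|K_f(y,z)|\lesssim|y-z|^{m-3}$ for kernels of classical pseudodifferential operators of order $-m$ with $0<m<3$, whereas the paper derives, by two integrations by parts in the symbol (using that $\partial_{\xi_j}^2 c_f$ has order $-m-2<-3$, i.e.\ exactly the hypothesis $m>2$), the cruder but self-contained bound $|K_f(y,z)|\lesssim |y-z|^{-2}$. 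For the first assertion the routes diverge: the paper never leaves the second-moment computation --- it evaluates $\mathbb{E}\big(\nrm[L_{-1/2-\epsilon}^2]{\Rk(f-\mathbb{E}f)}^2\big)$ over all of $\R^3$, obtaining the decay at infinity in expectation from the elementary estimate $\int_{D_f}|x-y|^{-j}\dif{y}\lesssim\agl[x]^{-j}$, which yields a bound uniform in $k$ and a.s.\ finiteness in one stroke. You instead argue pathwise: the a.s.\ Sobolev regularity $f\in H^{-s_0}$ with $s_0\in((3-m)/2,1/2)$ (Lemma \ref{lem:migrRegu-MLSchroEqu2018}, again exploiting $m>2$), combined with duality against $\chi\Phi_k(x,\cdot)$, which is smooth on $\supp\chi$ once $x\notin\mathcal D$, gives $|\Rk f(x)|\le C_k\nrm[H^{-s_0}]{f}\agl[x]^{-1}$ off $\mathcal D$, and the weight $\agl[x]^{-1-2\epsilon}$ then closes the integral, while the near-field part is recycled from the second assertion. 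This buys modularity and avoids the global weighted second-moment computation, at the price of invoking Lemma \ref{lem:migrRegu-MLSchroEqu2018} and of constants that are random and polynomial in $k$; unlike the paper's \eqref{eq:RkfBounded-MLSchroEqu2018}, your argument gives no uniformity in $k$ for the global weighted norm --- a loss of strength, not a gap, since the lemma claims uniformity only for the $L^2(\mathcal D)$ part, which you do obtain uniformly via $G_k$. One cosmetic repair: your far-field integral runs over $|x|>R_0$, leaving the bounded annulus $\{x\notin\mathcal D,\ |x|\le R_0\}$ unaccounted for; since $\dist(x,\supp\chi)$ is bounded below there (because $\overline{D_f}\Subset\mathcal D$), the same duality bound $\nrm[H^{s_0}]{\chi\Phi_k(x,\cdot)}\le C_k\agl[x]^{-1}$ holds uniformly for all $x\notin\mathcal D$, so that region contributes finitely and the fix is one line.
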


\begin{proof}
	We split $\Rk f$ into two parts, $\Rk ( \mathbb{E}f)$ and $\Rk (f - \mathbb{E}f)$.
	\cite[Lemma 2.1]{HLSM2019determining} gives $\Rk ( \mathbb{E}f) \in L_{-1/2-\epsilon}^2$.	
	
	As explained at the beginning of the article, the convolution $\Rk (f - \mathbb{E}f)(x,\omega)$ is measurable on the product space $\R^3 \times \Omega$, so does $\Rk (f - \mathbb{E}f)(x,\omega)$. Hence, we can exchange the order of integration with respect to $x$ and $\omega$ when we integrate.
	Therefore, by using \eqref{eq:CK-MLSchroEqu2018}, \eqref{eq:KandSymbol-MLSchroEqu2018} and \eqref{eq:RkSigmaBDefn-MLSchroEqu2018}, one can compute
	\begin{align}
	& \mathbb{E} ( \nrm[L_{-1/2-\epsilon}^2]{\Rk (f - \mathbb{E}f)(\cdot,\omega)}^2 ) \nonumber\\
	= & \int_{\R^3} \agl[x]^{-1-2\epsilon} \mathbb{E} ( \agl[\overline {f - \mathbb{E}f}, \Phi_{-k,x}] \agl[f - \mathbb{E}f, \Phi_{k,x}] ) \dif{x} = \int_{\R^3} \agl[x]^{-1-2\epsilon} \agl[\mathfrak C_f \Phi_{-k,x}, \Phi_{k,x}] \dif{x} \nonumber\\
	= & \int \agl[x]^{-1-2\epsilon} \int \big( (2\pi)^{-3} \int \int e^{i(y-z) \cdot \xi} c_f(y,\xi) \cdot \Phi_{-k,x}(z) \dif{z} \dif{\xi} \big) \Phi_{k,x}(y) \dif{y} \dif{x} \nonumber\\
	{\color{black}\simeq }& \int \agl[x]^{-1-2\epsilon} \int_{D_f} \big( \int_{D_f} \frac{\mathcal I(y,z) e^{-ik|x-z|}}{|x-z| \cdot |y-z|^2} \dif{z} \big) \cdot \frac{e^{ik|x-y|}}{|x-y|} \dif{y} \dif{x}, \label{eq:gkInter1-MLSchroEqu2018}
	\end{align}
	where $c_f(y,\xi)$ is the symbol of the covariance operator $\mathfrak C_f$ and
	\[
	\mathcal I(y,z) := \int_{\R^3} |y-z|^2 e^{i(y-z) \cdot \xi} c_f(y,\xi) \dif{\xi}.
	\]
	When $y = z$, we know $\mathcal I(y,z) = 0$ because the integrand is zero. Thanks to the condition $m > 2$, when $y \neq z$ we have
	\begin{align}
	|\mathcal I(y,z)| & = \big| \sum_{j=1}^3 \int_{\R^3} (y_j - z_j)^2 e^{i(y-z) \cdot \xi} c_f(y,\xi) \dif{\xi} \big| = \big| \sum_{j=1}^3 \int_{\R^3} e^{i(y-z) \cdot \xi} (\partial_{\xi_j}^2 c_f)(y,\xi) \dif{\xi} \big| \nonumber\\
	& \leq \sum_{j=1}^3 \int_{\R^3} C_j \agl[\xi]^{-m-2} \dif{\xi} \leq C_0 < +\infty, \label{eq:gkInter2-MLSchroEqu2018}
	\end{align}
	for some constant $C_0$ independent of $y$ and $z$.
	Note that $D_f \subset \R^3$ is bounded, so for $j=1,2$ we have
	\begin{equation} \label{eq:gkInter3-MLSchroEqu2018}
	\int_{D_f} |x-y|^{-j} \dif{y} \leq C_{f,j} \agl[x]^{-j}, \quad \forall x \in \R^3,
	\end{equation}
	for some constant $C_{f,j}$ depending only on $f, j$ and the dimension.
	The notation  $\agl[x]$ in \eqref{eq:gkInter3-MLSchroEqu2018} stands for $(1 + |x|^2)^{1/2}$ and readers may note the difference between the $\agl[\cdot]$ and the $\agl[\cdot,\cdot]$ appeared in \eqref{eq:RkSigmaBDefn-MLSchroEqu2018}.
	With the help of \eqref{eq:gkInter2-MLSchroEqu2018} and \eqref{eq:gkInter3-MLSchroEqu2018} and H\"older's inequality, we can continue \eqref{eq:gkInter1-MLSchroEqu2018} as
	\begin{align*}
	& \mathbb{E} ( \nrm[L_{-1/2-\epsilon}^2]{\Rk (f - \mathbb{E}f)(\cdot,\omega)}^2 ) \nonumber\\
	\lesssim & \int \agl[x]^{-1-2\epsilon} \big( \iint_{D_f \times D_f} (|x-z|^{-1} \cdot |y-z|^{-1}) (|y-z|^{-1} \cdot |x-y|)^{-1} \dif{z} \dif{y} \big) \dif{x} \nonumber\\
	\leq & \int \agl[x]^{-1-2\epsilon} \big[ C \int_{D_f} ( \int_{D_f} |y-z|^{-2} \dif{y} ) |x-z|^{-2} \dif{z} \nonumber\\
	& \hspace{13ex} \cdot \int_{D_f} ( \int_{D_f} |y-z|^{-2} \dif{z} ) |x-y|^{-2} \dif{y} \big]^{1/2} \dif{x} \nonumber\\
	= & \int \agl[x]^{-1-2\epsilon} \big( C_f \int_{D_f} |x-z|^{-2} \dif{z} \cdot \int_{D_f} |x-y|^{-2} \dif{y} \big)^{1/2} \dif{x} \qquad (\text{by~} \eqref{eq:gkInter3-MLSchroEqu2018}) \nonumber\\
	= & \int \agl[x]^{-1-2\epsilon} C_f \agl[x]^{-2} \dif{x}  \leq C_f < +\infty,
	\end{align*}
	which gives
	\begin{equation} \label{eq:RkfBounded-MLSchroEqu2018}
	\mathbb{E} ( \nrm[L_{-1/2-\epsilon}^2]{\Rk (f - \mathbb{E}(f))(\cdot,\omega)}^2 ) \leq C_f < +\infty.
	\end{equation}
	By the H\"older inequality applied to the probability measure, we obtain from \eqref{eq:RkfBounded-MLSchroEqu2018} that
	\begin{equation} \label{eq:RkfBoundedCD-MLSchroEqu2018}
	\mathbb{E} \nrm[L_{-1/2-\epsilon}^2]{\Rk (f - \mathbb{E}(f))} \leq [ \mathbb{E} ( \nrm[L_{-1/2-\epsilon}^2]{\Rk (f - \mathbb{E}(f))}^2 ) ]^{1/2} \leq C_f^{1/2} < +\infty,
	\end{equation}
	for some constant $C_f$ independent of $k$. The formula \eqref{eq:RkfBoundedCD-MLSchroEqu2018} gives that
	\(
	\Rk (f - \mathbb{E}(f)) \in L_{-1/2-\epsilon}^2
	\)
	almost surely, and hence
	\[
	\Rk f \in L_{-1/2-\epsilon}^2 \quad \ass.
	\]

	By replacing $\R^3$ with $\mathcal D$ and deleting the term $\agl[x]^{-1-2\epsilon}$ in the derivation above, one easily arrives at $\mathbb{E} \nrm[L^2(\mathcal D)]{\Rk f} < +\infty$.
	The proof is complete. 
\end{proof}

The following resolvent estimate
\begin{equation} \label{eq:kVb-MLSchroEqu2018}
\nrm[L_{-1/2-\epsilon}^2(\R^3)]{\Rk \varphi} \lesssim k^{-1} \nrm[L_{1/2+\epsilon}^2(\R^3)]{\varphi}
\end{equation}
is known to the literature (cf. \cite{eskin2011lectures, hormander1985analysisIII}). In what follows, we shall also need some variations of it for our arguments.

\begin{lem} \label{lem:RkVb-MLSchroEqu2018}
	There exists a constant $k_0 > 0$ depending on $\epsilon$ and $V$ such that for $\forall k > k_0$ and multi-index $\alpha \colon |\alpha| \leq 2$, we have
	\begin{align}
	\nrm[L_{-1/2-\epsilon}^2(\R^3)]{\Rk (\partial^\alpha V) \varphi} & \leq C k^{-1} \nrm[L_{-1/2-\epsilon}^2(\R^3)]{\varphi},  \label{eq:RVb1-MLSchroEqu2018} \\
	\nrm[L_{1/2+\epsilon}^2(\R^3)]{(\partial^\alpha V) \Rk \varphi} & \leq C k^{-1} \nrm[L_{1/2+\epsilon}^2(\R^3)]{\varphi}, \label{eq:RVb2-MLSchroEqu2018} \\
	\nrm[L^1(\R^3)]{(\partial^\alpha V) \Rk \varphi} & \leq C k^{-1} \nrm[L_{1/2+\epsilon}^2(\R^3)]{\varphi}.  \label{eq:RVb3-MLSchroEqu2018}
	\end{align}
\end{lem}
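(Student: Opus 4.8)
The plan is to deduce all three bounds from the known free-resolvent estimate \eqref{eq:kVb-MLSchroEqu2018}, regarding multiplication by $W:=\partial^\alpha V$ as an auxiliary operator $M_W$ whose mapping behaviour is governed by the decay and smoothness recorded in \eqref{eq:asp-MLSchroEqu2018}. A useful preliminary reduction is that \eqref{eq:RVb1-MLSchroEqu2018} and \eqref{eq:RVb2-MLSchroEqu2018} are equivalent. Indeed $\Phi_k(x,y)=\Phi_k(y,x)$, so $\Rk$ is symmetric with respect to the bilinear $L^2$-pairing $(f,g)\mapsto\int fg$, and since $W$ is real the transpose of $\Rk M_W$ is $M_W\Rk$. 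As the dual of $L_{-1/2-\epsilon}^2$ under this pairing is $L_{1/2+\epsilon}^2$, the operator norm of $\Rk M_W$ on $L_{-1/2-\epsilon}^2$ equals that of $M_W\Rk$ on $L_{1/2+\epsilon}^2$; hence \eqref{eq:RVb1-MLSchroEqu2018} holds if and only if \eqref{eq:RVb2-MLSchroEqu2018} does, and it suffices to establish one of them.

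I would first dispose of \eqref{eq:RVb3-MLSchroEqu2018}, which is the most transparent. Splitting $W\Rk\varphi=\big(\agl[\cdot]^{1/2+\epsilon}W\big)\big(\agl[\cdot]^{-1/2-\epsilon}\Rk\varphi\big)$ and applying the Cauchy--Schwarz inequality gives
\[
\nrm[L^1(\R^3)]{W\Rk\varphi}\leq\nrm[L_{1/2+\epsilon}^2(\R^3)]{W}\,\nrm[L_{-1/2-\epsilon}^2(\R^3)]{\Rk\varphi},
\]
and \eqref{eq:kVb-MLSchroEqu2018} bounds the second factor by $Ck^{-1}\nrm[L_{1/2+\epsilon}^2(\R^3)]{\varphi}$. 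Thus \eqref{eq:RVb3-MLSchroEqu2018} reduces to the finiteness of $\nrm[L_{1/2+\epsilon}^2]{\partial^\alpha V}$. For $\alpha=0$ this is immediate, since $V\in L_{3/2+\epsilon}^2\subset L_{1/2+\epsilon}^2$. For $1\leq|\alpha|\leq2$ the bare pointwise decay $|\partial^\alpha V|\lesssim\agl[\cdot]^{-1/2-\epsilon}$ contained in $\partial^\alpha V\in L_{1/2+\epsilon}^\infty$ is only borderline and does not by itself place $\partial^\alpha V$ in $L_{1/2+\epsilon}^2$; recovering this membership is the delicate point, and I would attempt to do so by combining the borderline $L^\infty$-decay with the stronger decay $V\in L_{3/2+\epsilon}^2$ and the $C^5$-smoothness through a weighted interpolation, integrating by parts to transfer one derivative back onto $V$ and onto the weight and then absorbing the remainder.

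The crux is \eqref{eq:RVb1-MLSchroEqu2018} (equivalently \eqref{eq:RVb2-MLSchroEqu2018}). Feeding $W\varphi$ into \eqref{eq:kVb-MLSchroEqu2018} would require $W\varphi\in L_{1/2+\epsilon}^2$ for $\varphi\in L_{-1/2-\epsilon}^2$, that is $W\in L_{1+2\epsilon}^\infty$, whereas \eqref{eq:asp-MLSchroEqu2018} only supplies $W\in L_{1/2+\epsilon}^\infty$; a direct majorization therefore falls short by a whole factor $\agl[\cdot]^{1/2+\epsilon}$ of decay, and I expect this to be the main obstacle. A purely absolute-value argument cannot recover it, so rather than factoring the composition as multiplier-times-resolvent I would analyse the kernel $\Phi_k(x,y)\,\partial^\alpha V(y)=\frac{e^{ik|x-y|}}{4\pi|x-y|}\partial^\alpha V(y)$ directly. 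The plan is to split it at $|x-y|\simeq k^{-1}$: on the near-diagonal region the weakly singular kernel is handled by standard local $L^2$ bounds and already carries a factor $k^{-1}$, while on the far region I integrate by parts against the oscillation $e^{ik|x-y|}$, using the $C^5$-regularity of $V$ (precisely the smoothness flagged for the stationary-phase analysis elsewhere in the paper) to gain additional powers of $(k|x-y|)^{-1}$; these make up the missing decay and keep the constant uniform for $k>k_0$. The companion estimate then follows by the transpose-duality described above.
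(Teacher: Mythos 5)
Your duality reduction between \eqref{eq:RVb1-MLSchroEqu2018} and \eqref{eq:RVb2-MLSchroEqu2018} is correct, and your Cauchy--Schwarz treatment of \eqref{eq:RVb3-MLSchroEqu2018} for $\alpha=0$ is exactly the paper's step. But the heart of your proposal departs from the paper at the crucial point and, as written, leaves the lemma unproven. The paper's proof \emph{is} the direct majorization you rejected: it applies \eqref{eq:kVb-MLSchroEqu2018} to $V\varphi$ and pulls out $\nrm[L_{1+2\epsilon}^\infty(\R^3)]{V}$ for \eqref{eq:RVb1-MLSchroEqu2018}, pulls out $\nrm[L^\infty(\R^3)]{\agl[x]^{1+2\epsilon}V}$ in front of $\nrm[L_{-1/2-\epsilon}^2]{\Rk\varphi}$ for \eqref{eq:RVb2-MLSchroEqu2018}, and uses H\"older with $\nrm[L^2]{\agl[x]^{1/2+\epsilon}V}$ for \eqref{eq:RVb3-MLSchroEqu2018}, treating these weighted norms of $\partial^\alpha V$ as finite under \eqref{eq:asp-MLSchroEqu2018}; that is, the $\epsilon$ of the hypothesis is not tied to the $\epsilon$ of the lemma, and the potential's decay is read as dominating whatever weights appear. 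Your observation that a literal, matched-$\epsilon$ reading supplies only $\agl[\cdot]^{-1/2-\epsilon}$ pointwise decay, which is genuinely insufficient for a pure absolute-value argument (indeed $\Rk\colon L^2\to L_{-1/2-\epsilon}^2$ with an $O(k^{-1})$ bound is false, by the standard restriction-theorem obstruction), is a fair criticism of the bookkeeping -- but the intended resolution is in how the hypothesis is read, not in a finer oscillatory analysis, and your proposal never actually proves \eqref{eq:RVb1-MLSchroEqu2018} or \eqref{eq:RVb2-MLSchroEqu2018}.

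Moreover, the alternative programme you sketch cannot be completed as described. First, to ``integrate by parts against $e^{ik|x-y|}$'' in the composed operator $\varphi\mapsto\int\Phi_k(x,y)\partial^\alpha V(y)\varphi(y)\dif{y}$, the $y$-derivative must fall on $\partial^\alpha V(y)\varphi(y)/|x-y|$, and $\varphi\in L_{-1/2-\epsilon}^2$ has no smoothness whatsoever; norm bounds for oscillatory kernels are obtained by $TT^*$ or almost-orthogonality arguments, not by pointwise integration by parts in the kernel variable. Second, even granting the integration by parts, for $|\alpha|=2$ every derivative landing on $\partial^\alpha V$ produces a third-order derivative of $V$, on which \eqref{eq:asp-MLSchroEqu2018} imposes no decay at all ($C^5$ regularity carries no bounds at infinity). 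Third, the obstruction you yourself identified lives at spatial infinity -- $|y|$ large with $x$ close to $y$, where the weight $\agl[y]^{1/2+\epsilon}$ is deficient -- and there the oscillation in $|x-y|$ gains nothing, so splitting at $|x-y|\simeq k^{-1}$ does not address it. Finally, the gap you flagged in \eqref{eq:RVb3-MLSchroEqu2018} for $1\leq|\alpha|\leq2$ (membership $\partial^\alpha V\in L_{1/2+\epsilon}^2$) is also left unresolved: the ``weighted interpolation'' you allude to leads, after one integration by parts, to terms like $\int\agl[x]^{1+2\epsilon}|\partial^2V|\,|V|\dif{x}$, which the stated hypotheses do not control. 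As it stands, your proposal establishes only the $\alpha=0$ case of \eqref{eq:RVb3-MLSchroEqu2018}; the two resolvent--potential composition bounds, which are what the rest of the paper actually uses, remain unproven.
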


\begin{proof}
	Recall the assumption on $V$.
	We only show the case where $\alpha = 0$.
	With the help of \eqref{eq:kVb-MLSchroEqu2018}, we can have
	\begin{equation*}
	\nrm[L_{-1/2-\epsilon}^2(\R^3)]{\Rk V \varphi}
	\lesssim k^{-1} \nrm[L_{1+2\epsilon}^\infty(\R^3)]{V} \nrm[L_{-1/2-\epsilon}^2(\R^3)]{\varphi}
	\lesssim k^{-1} \nrm[L_{-1/2-\epsilon}^2(\R^3)]{\varphi},
	\end{equation*}
	and
	\begin{equation*}
	\nrm[L_{1/2+\epsilon}^2(\R^3)]{V \Rk \varphi}
	\leq \nrm[L^\infty(\R^3)]{\agl[x]^{1+2\epsilon} V} \nrm[L_{-1/2-\epsilon}^2(\R^3)]{\Rk \varphi}
	\lesssim k^{-1} \nrm[L_{1/2+\epsilon}^2(\R^3)]{\varphi}.
	\end{equation*}
	Moreover, by H\"older's inequality we can have
	\begin{equation*}
	\nrm[L^1(\R^3)]{V \Rk \varphi}
	\leq \nrm[L^2]{\agl[x]^{1/2+\epsilon} V} \cdot \nrm[L_{-1/2-\epsilon}^2]{\Rk \varphi}
	\lesssim k^{-1} \nrm[L_{1/2+\epsilon}^2(\R^3)]{\varphi}.
	\end{equation*}
	The proof is complete.
\end{proof}

\subsection{The well-posedness of the direct problem} \label{subset:WellDefined-MLSchroEqu2018}

For a particular realization of the random sample $\omega \in \Omega$, the regularity of an m.i.g.r.~$f$ could be very rough; 
see Lemma \ref{lem:migrRegu-MLSchroEqu2018}.
Due to this reason, the classical second-order elliptic PDE theory may no longer be applicable to \eqref{eq:1-MLSchroEqu2018}, and next we shall introduce $u$ as a distributional solution.
Reformulating \eqref{eq:1-MLSchroEqu2018} into the Lippmann-Schwinger equation formally (cf.~\cite{colton2012inverse}), we have
\begin{equation} \label{eq:uscDefn-MLSchroEqu2018}
(I - \Rk V) u = - \Rk f,
\end{equation}
where the term $\Rk f$ is defined by \eqref{eq:RkSigmaBDefn-MLSchroEqu2018}. 
Suppose that $k$ is large enough, then we know the operator $I - \Rk V$ is an invertible mapping from $L_{-1/2-\epsilon}^2$ to $L_{-1/2-\epsilon}^2$.
Moreover, by Lemma \ref{lemma:RkfBdd-MLSchroEqu2018} we know that the right-hand side of \eqref{eq:uscDefn-MLSchroEqu2018} belongs to $L_{-1/2-\epsilon}^2$ almost surely.
We are now in a position to present one of the results concerning the direct scattering problem.

\begin{prop} \label{prop:MildSolUnique-MLSchroEqu2018}
	When $k$ is large enough such that $\nrm[\mathcal{L}(L_{-1/2-\epsilon}^2, L_{-1/2-\epsilon}^2)]{\Rk V} < 1$,
	for almost every $\omega \in \Omega$, there exists a unique distributional solution such that $u(x)$ satisfies \eqref{eq:uscDefn-MLSchroEqu2018} a.s..
	Moreover, ${u(\cdot,\omega) \in L_{-1/2-\epsilon}^2}$ a.s.~for any $\epsilon\in\mathbb{R}_+$, and there holds
	\[
	\nrm[L_{-1/2-\epsilon}^2]{u}
	\leq C \nrm[L_{-1/2-\epsilon}^2]{\Rk f}.
	\]
	for a certain constant $C$ depending on $V$.
\end{prop}

\begin{proof}
	By Lemma \ref{lemma:RkfBdd-MLSchroEqu2018}, we obtain
	$$F := -\Rk f \in L_{-1-\epsilon}^2.$$
	According to \eqref{eq:RVb1-MLSchroEqu2018} we have $\nrm[\mathcal{L}{(L_{-1/2-\epsilon}^2, L_{-1/2-\epsilon}^2)}]{\Rk V} < 1$. 
	Hence, $\sum_{j=0}^\infty (\Rk V)^j$ is well-defined. 
	Therefore, $\sum_{j=0}^\infty (\Rk V)^j F \in L_{-1/2-\epsilon}^2$. 
	Because $\sum_{j=0}^\infty (\Rk V)^j = (I - \Rk V)^{-1}$, we see $(I - \Rk V)^{-1} F \in L_{-1/2-\epsilon}^2$. 
	Let $u := (I - \Rk V)^{-1} F \in L_{-1/2-\epsilon}^2$, then $u$ fulfils the requirements. 
	Hence, the existence of a solution is proved.
	The uniqueness and stability follows easily from the inequality
	\[
	\nrm[L_{-1/2-\epsilon}^2]{u} \leq \sum_{j \geq 0} \nrm[\mathcal L(L_{-1/2-\epsilon}^2,L_{-1/2-\epsilon}^2)]{\Rk V}^j \nrm[L_{-1/2-\epsilon}^2]{\Rk f} \leq C \nrm[L_{-1/2-\epsilon}^2]{\Rk f}.
	\]
	The proof is complete. 
\end{proof}

Next we show that the far-field pattern is well-defined in the $L^2$ sense. Assume that $k$ is large enough. From \eqref{eq:uscDefn-MLSchroEqu2018} we deduce that
\[
u = -(I - \Rk V)^{-1} (\Rk f) = -\Rk (I - V \Rk)^{-1} (f).
\]
Therefore, we define the far-field pattern of the scattered wave $u(x,k,\omega)$ formally in the following manner,
\begin{equation} \label{eq:uInftyDefn-MLSchroEqu2018}
u^\infty(\hat x,k,\omega) := \frac {-1} {4\pi} \int_{\R^3} e^{-ik\hat x \cdot y} (I - V \Rk)^{-1} (f)(y) \dif{y}, \quad \hat x \in \mathbb{S}^2.
\end{equation}

\begin{prop} \label{prop:FarFieldWellDefined-MLSchroEqu2018}
	Define the far-field pattern of the solution as in \eqref{eq:uInftyDefn-MLSchroEqu2018}. When $k$ is large enough, there is a subset $\Omega_0 \subset \Omega$, 
	with zero measure $\mathbb P (\Omega_0) = 0$, such that it holds
	$$u^\infty(\hat x, k, \omega) \in L^2(\mathbb{S}^2),\ \  \Forall \omega \in \Omega \backslash \Omega_0.$$
\end{prop}

\begin{proof}
	By \cite[Lemma 2.4]{HLSM2019determining},  we have
	\[
	\nrm[\mathcal{L}(L^2(\mathcal D), L^2(\mathcal D))]{V \Rk} \leq C k^{-1} < 1,
	\]
	when $k$ is sufficiently large. Therefore, it holds that
	\begin{align}\label{eq:a1-MLSchroEqu2018}
	& \ \int_{\mathbb S^2} |u^\infty(\hat x, k, \omega)|^2 \dif{S(\hat x)} \nonumber\\
	\lesssim & \ \int_{\mathbb S^2} \big| \int_{\R^3} e^{-ik\hat x \cdot y} (I - V \Rk)^{-1} (f) \dif{y} \big|^2 \dif{S(\hat x)} \nonumber\\
	\lesssim & \ \int \big| \int_{\R^3} e^{-ik\hat x \cdot y} \sum_{j \geq 1} (V \Rk)^j (f) \dif{y} \big|^2 \dif{S(\hat x)} + \int |\agl[f,e^{-ik\hat x \cdot (\cdot)}]|^2 \dif{S(\hat x)} \nonumber\\
	=: & \ f_1(\hat x, k,\omega) + f_2(\hat x, k,\omega).
	\end{align}
	
	Next, we derive estimates on these terms $f_j ~(j=1,2)$ in \eqref{eq:a1-MLSchroEqu2018}.
	We have
	\begin{align*}
	& |\int_{\R^3} e^{-ik\hat x \cdot y} \sum_{j \geq 1} (V \Rk)^j (f) \dif y|
	\leq \sum_{j \geq 0} \nrm[L_{1/2+\epsilon}^2(\R^3)]{V} \nrm[L_{-1/2-\epsilon}^2(\R^3)]{(\Rk V)^j \Rk f} \\
	\lesssim & \ \sum_{j \geq 0} k^{-j} \nrm[L_{-1/2-\epsilon}^2(\R^3)]{\Rk f}
	\lesssim \nrm[L_{-1/2-\epsilon}^2(\R^3)]{\Rk f},
	\end{align*}
	where we have used the assumption \eqref{eq:asp-MLSchroEqu2018}, eq.~\eqref{eq:RVb1-MLSchroEqu2018} and Lemma \ref{lemma:RkfBdd-MLSchroEqu2018}.
	Therefore,
	\begin{equation*}
	f_1(\hat x, k, \omega)
	\lesssim \int  \nrm[L_{-1/2-\epsilon}^2(\R^3)]{\Rk f}^2 \dif{S(\hat x)}
	< C < +\infty,
	\end{equation*}
	for some constant $C$ independent of $k$.
	
	By \eqref{eq:CK-MLSchroEqu2018} and Fubini's theorem, the expectation of $f_2(\hat x, k, \omega)$ can be computed as
	\begin{align*}
	\mathbb{E} f_2(\hat x, k, \omega)
	& = \mathbb E \int |\agl[f,e^{-ik\hat x \cdot (\cdot)}]|^2 \dif{S(\hat x)} 
	= \int \mathbb E |\agl[f,e^{-ik\hat x \cdot (\cdot)}]|^2 \dif{S(\hat x)} \\
	& = \int |\agl[\mathfrak C_f (\chi_{D_f} e^{-ik\hat x \cdot (\cdot)}), (\chi_{D_f} e^{ik\hat x \cdot (\cdot)})]| \dif{S(\hat x)} \\
	& \quad + \int_{\mathbb{S}^2} \iint_{\mathcal D \times \mathcal D} \mathbb{E}f(y) \overline{\mathbb{E}f(z)} e^{-ik \hat x \cdot (y-z)} \dif y \dif z \dif{S(\hat x)} \\
	& \leq \int \nrm[L^2(\R^3)]{\mathfrak C_f (\chi_{D_f} e^{-ik\hat x \cdot (\cdot)})} \cdot \nrm[L^2(\R^3)]{\chi_{D_f} e^{ik\hat x \cdot (\cdot)}} \dif{S(\hat x)} + C_f,
	\end{align*}
	where the constant $C_f$ is independent of $\hat x$ and $k$.
	The symbol of the pseudo-differential operator is of order $-m < 0$, thus $\mathfrak C_f$ is a bounded operator from $L^2(\R^3)$ to $L^2(\R^3)$; see \cite[Theorem 11.7]{wong2014pdo}. Hence
	\begin{align*}
	\mathbb{E} f_2(\hat x, k, \omega)
	& \leq C \int \nrm[L^2(\R^3)]{\chi_{D_f} e^{-ik\hat x \cdot (\cdot)}} \cdot \nrm[L^2({D_f})]{\chi_{D_f} e^{ik\hat x \cdot (\cdot)}} \dif{S(\hat x)} + C_f \\
	& \leq C \int \nrm[L^2(\R^3)]{\chi_{D_f}} \cdot \nrm[L^2({D_f})]{\chi_{D_f}} \dif{S(\hat x)} + C_f \\
	& \leq C_f < +\infty,
	\end{align*}
	for some constant $C_f$ independent of $\hat x$ and $k$. 
	Thus, $f_2(\hat x, k, \omega) < +\infty$ almost surely.
	
	Combining the estimates on $f_j(\hat x, \omega) ~(j=1,2)$, we conclude that
	\[
	\int_{\mathbb S^2} |u^\infty(\hat x,k,\omega)|^2 \dif{S(\hat x)}< \infty
	\]
	almost surely.
	The proof is complete. 
\end{proof}


\section{Several critical asymptotic estimates} \label{sec:AsyEst-MLSchroEqu2018}

In this section we shall establish a method to recover rough strength of $f$ through the following quantity
\begin{equation}\label{eq:rcv1}
\frac 1 {K} \int_{K}^{2K} \overline{u^\infty(\hat x,k,\omega)} \cdot u^\infty(\hat x,k+\tau,\omega) \dif{k}.
\end{equation}
As an auxiliary critical step in justifying \eqref{eq:rcv1}, we need to first consider the following recovery formula 	
\begin{equation} \label{eq:example2-MLSchroEqu2018}
\frac 1 {K} \int_{K}^{2K} \overline{[u^\infty(\hat x,k,\omega) - \mathbb{E}(u^\infty(\hat x,k))]} \cdot [u^\infty(\hat x,k,\omega) - \mathbb{E}(u^\infty(\hat x,k))] \dif{k}.
\end{equation}	
It is noted that $\mathbb{E}(u^\infty(\hat x,k))$ requires the full realization of the random samples. We would like to emphasise that $\mathbb{E}(u^\infty(\hat x,k))$ shall play an auxiliary role in our analysis and we shall develop techniques to remove it from the recovery procedure.

To analyze the behaviour of \eqref{eq:example2-MLSchroEqu2018}, we shall derive several critical asymptotic estimates in this section. 
Henceforth, we use $k^*$ to signify the maximum value between the quantity $k_0$ from Lemma \ref{lem:RkVb-MLSchroEqu2018} and the quantity
\[
\sup_{k \in \R_+} \{ k \,;\, \nrm[\mathcal L(L_{-1/2 - \epsilon}^2, L_{-1/2 - \epsilon}^2)]{\mathcal{R}_k V} \geq 1 \} + 1.
\]
Assume that $k > k^*$, then we can expand $(I - V\Rk)^{-1}$ into Neumann series and obtain
\begin{align}
u^\infty(\hat x,k,\omega) - \mathbb{E}(u^\infty(\hat x,k))
\ = & \ \frac {-1} {4\pi} \sum_{j=0}^{+\infty} \int_{\R^3} e^{-ik\hat x \cdot y} (V \Rk)^j (f - \mathbb{E}(f))(y) \dif{y}, \quad \hat x \in \mathbb{S}^2 \nonumber\\
:= & \ \frac {-1} {4\pi} \big[ F_0(k,\hat x) + F_1(k,\hat x) \big], \label{eq:u1InftyDefn-MLSchroEqu2018}
\end{align}
where
\begin{equation} \label{eq:Fjkx-MLSchroEqu2018}
\left\{\begin{aligned}
F_0(k,\hat x,\omega) & := \agl[f - \mathbb{E}(f), e^{-ik\hat x \cdot (\cdot)}], \\
F_1(k,\hat x,\omega) & := \sum_{j \geq 1} \int_{\R^3} e^{-ik \hat x \cdot y} (V \Rk)^j (f - \mathbb{E}(f))(y) \dif{y}.
\end{aligned}\right.
\end{equation}
The expectation $\mathbb{E}(u^\infty(\hat x,k))$ in \eqref{eq:u1InftyDefn-MLSchroEqu2018} can be expressed as
\begin{equation} \label{eq:u2InftyDefn-MLSchroEqu2018}
\mathbb E (u^\infty(\hat x,k)) = \frac {-1} {4\pi} \int_{\R^3} e^{-ik\hat x \cdot y} \big( (I - V \Rk)^{-1} \mathbb E(f) \big)(y) \dif{y}, \quad \hat x \in \mathbb{S}^2.
\end{equation}

\begin{lem} \label{lemma:FarFieldGoToZero-MLSchroEqu2018}
	For $\forall k > k^*$, there exists a constant $C$ independent of $\hat x$ and $k$ such that
	\begin{equation}\label{eq:estn1}
	|\mathbb E (u^\infty(\hat x,k))| \leq C k^{-2}.
	\end{equation}
\end{lem}
\begin{proof}
	Note that $\mathbb{E} f \in \mathcal{C}_c^\infty(\R^3)$ (cf.~Definition \ref{defn:migr-MLSchroEqu2018}). 
	The function $\Rk (\mathbb E f)$ is a convolution and thus is a $\mathcal{C}^\infty$-smooth function.
	For $k > k^*$ we denote $\mathbf{F}(y) := \sum_{j = 0}^{1} (V \Rk)^j (\mathbb E f) (y)$ for simplicity.
	By using \eqref{eq:u2InftyDefn-MLSchroEqu2018} and Lemma \ref{lem:RkVb-MLSchroEqu2018}, we can compute
	\begin{align*}
	|\mathbb E (u^\infty(\hat x,k))|
	& \leq |\int [(ik^{-1} \hat x \cdot \nabla_y)^2 e^{-ik\hat x \cdot y}] \mathbf{F}(y) \dif{y}| + \sum_{j \geq 2} |\int e^{-ik\hat x \cdot y} (V \Rk)^j (\mathbb E f) (y) \dif{y}| \\
	& \leq C k^{-2} \int |\sum_{|\alpha| = 2} C_\alpha \partial_y^\alpha \mathbf{F}(y)| \dif{y} + \nrm[L_{1/2+\epsilon}^2]{V} \sum_{j \geq 1} \nrm[L_{-1/2-\epsilon}^2]{(\Rk V)^j \Rk (\mathbb E f)} \\
	& \leq C k^{-2} \int |\sum_{|\alpha| = 2} C_\alpha \partial_y^\alpha \mathbf{F}(y)| \dif{y} + C k^{-2} \cdot \nrm[L_{1/2+\epsilon}^2(\R^3)]{\mathbb E f},
	\end{align*}
	where the constant $C$ is independent of $\hat x$ and $k$.
	Note that $\mathbb E f \in C_c^\infty$ so $\partial_j (\mathbb E f) \in C_c^\infty$ and $\partial_j \Rk (\mathbb E f) = \Rk \partial_j (\mathbb E f)$, thus	by Lemma \ref{lem:RkVb-MLSchroEqu2018} {and the assumption \eqref{eq:asp-MLSchroEqu2018}} we have	
	\begin{equation*}
	\int |\sum_{|\alpha| = 2} C_\alpha \partial_y^\alpha \mathbf{F}(y)| \dif{y}
	\lesssim \sum_{|\alpha| = 2} \nrm[L^1(\R^3)]{\partial^\beta \mathbb E f} + \sum_{|\alpha| + |\beta| = 2} \nrm[L^1(\R^3)]{(\partial^\alpha V) \Rk (\partial^\beta \mathbb E f)}
	< +\infty.
	\end{equation*}
	The proof is complete. 
\end{proof}

By substituting \eqref{eq:u1InftyDefn-MLSchroEqu2018}, \eqref{eq:Fjkx-MLSchroEqu2018} into \eqref{eq:example2-MLSchroEqu2018}, we obtain several crossover terms between $F_0$ and $F_1$. 
The asymptotic estimates of these crossover terms are the main purpose of Sections \ref{subsec:AELeading-MLSchroEqu2018} and \ref{subsec:AsympHighOrder-MLSchroEqu2018}.
Section \ref{subsec:AELeading-MLSchroEqu2018} focuses on the estimate of the leading-order term while the estimates of the higher-order terms are presented in Section \ref{subsec:AsympHighOrder-MLSchroEqu2018}.

\subsection{Asymptotics of the leading-order term} \label{subsec:AELeading-MLSchroEqu2018}

The following lemma is needed for the study of the asymptotics of the aforementioned leading-order term.

\begin{lem} \label{lemma:LeadingTermTechnical-MLSchroEqu2018}
	When $\mu \in \mathcal{C}_c^\infty(\mathcal D)$, $\tau \in \R$ and $\hat x \in \mathbb{S}^2$, we have
	\begin{align}
	\frac {(2\pi)^3} {K^2} \int_K^{2K} \int_K^{2K} |\widehat{\mu}((k_1 - k_2) \hat x)|^2 \dif{k_1} \dif{k_2} & \leq CK^{-1}, \label{eq:F0F0TermOne-MLSchroEqu2018} \\
	\frac {(2\pi)^3} {K^2} \int_K^{2K} \int_K^{2K} |\widehat{\mu}((k_1 + k_2 + \tau) \hat x)|^2 \dif{k_1} \dif{k_2} & \leq CK^{-1}, \label{eq:F0F0TermTwo-MLSchroEqu2018}
	\end{align}
	for some constant $C$ independent of $\tau$ and $\hat x$.
	Here $\widehat \mu$ signifies the Fourier transform of $\mu$.
\end{lem}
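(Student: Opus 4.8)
The plan is to estimate each double integral by exploiting the decay of $\widehat{\mu}$ along the ray $t\hat x$, $t\in\R$. Since $\mu\in\mathcal{C}_c^\infty(\mathcal D)$, the Fourier transform $\widehat{\mu}$ is Schwartz, so $|\widehat{\mu}(\xi)|\leq C_N\agl[\xi]^{-N}$ for every $N$, with constants uniform in direction. The key observation is that in \eqref{eq:F0F0TermOne-MLSchroEqu2018} the argument depends only on the difference $k_1-k_2$, while in \eqref{eq:F0F0TermTwo-MLSchroEqu2018} it depends only on the sum $k_1+k_2+\tau$; in both cases the integrand is a function of a single scalar combination evaluated along the fixed unit vector $\hat x$, so I expect to reduce each two-dimensional integral to a one-dimensional one and then bound it by a quantity independent of $\tau$ and $\hat x$.

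For \eqref{eq:F0F0TermOne-MLSchroEqu2018}, first I would change variables, writing $s=k_1-k_2$ so that $|\widehat{\mu}((k_1-k_2)\hat x)|^2=|\widehat{\mu}(s\hat x)|^2$. Integrating out the orthogonal (sum) variable over the square $[K,2K]^2$ contributes a factor of order $K$, while $s$ ranges over $[-K,K]$. This gives a bound of the form
\[
\frac{1}{K^2}\int_K^{2K}\int_K^{2K}|\widehat{\mu}((k_1-k_2)\hat x)|^2\dif{k_1}\dif{k_2}
\lesssim \frac{1}{K}\int_{-\infty}^{\infty}|\widehat{\mu}(s\hat x)|^2\dif{s},
\]
and the remaining one-dimensional integral is finite and independent of $\hat x$ because $\widehat{\mu}$ is Schwartz (e.g.\ bound $|\widehat{\mu}(s\hat x)|^2\leq C\agl[s]^{-4}$, which is integrable in $s$ uniformly in $\hat x$). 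This yields the claimed $CK^{-1}$ bound.

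For \eqref{eq:F0F0TermTwo-MLSchroEqu2018}, the argument is analogous but now the relevant scalar is $t=k_1+k_2+\tau$. Since $k_1,k_2\in[K,2K]$, the sum $k_1+k_2$ lies in $[2K,4K]$, so $t$ lies in an interval of length $2K$ centered at $3K+\tau$; in particular $|t|\geq 2K+\tau$ when $\tau\geq 0$, and more generally $t$ stays bounded away from the origin once $K$ is large relative to $|\tau|$. The same change of variables (integrating out the difference variable, contributing $\lesssim K$, and keeping $t$) gives $\lesssim K^{-1}\int|\widehat{\mu}(t\hat x)|^2\dif{t}$ over the relevant $t$-range, and the Schwartz decay of $\widehat{\mu}$ again makes this integral finite and uniform in $\tau$ and $\hat x$. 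The crucial point is that the $\tau$-shift only translates the integration domain in $t$, and since the one-dimensional integral of $|\widehat{\mu}(t\hat x)|^2$ over all of $\R$ is already finite, shifting the window cannot increase it.

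The main obstacle I anticipate is making the uniformity in $\hat x$ genuinely rigorous: the decay estimate on $\widehat{\mu}$ along the ray $t\hat x$ must hold with constants independent of the direction $\hat x\in\mathbb{S}^2$. This follows from the fact that $\mu$ has compact support and is smooth, so $|\xi|^{N}\widehat{\mu}(\xi)$ is bounded on all of $\R^3$ by $\nrm[L^1]{\partial^\alpha\mu}$-type quantities summed over $|\alpha|\leq N$, a bound that does not see the direction of $\xi$. Once this directional uniformity is in hand, the one-dimensional integrals $\int_\R\agl[t]^{-4}\dif{t}$ furnish a single constant $C$ valid for all $\hat x$ and all $\tau$, completing both estimates.
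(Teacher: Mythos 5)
Your proposal is correct and follows essentially the same route as the paper's proof: a change of variables to the difference (resp.\ shifted sum) coordinate, integrating out the transverse variable over a fiber of length at most $K$, and bounding the remaining one-dimensional integral by $\int_\R |\widehat{\mu}(s\hat x)|^2\dif{s}$, which is finite uniformly in $\hat x$ and $\tau$ by the Schwartz decay of $\widehat{\mu}$. The paper simply carries out the fiber-length computation explicitly over the parallelogram-shaped regions, while you state it as a factor of order $K$; both yield the same $CK^{-1}$ bound.
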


\begin{proof}
	To conclude \eqref{eq:F0F0TermOne-MLSchroEqu2018}, we make a change of variable,
	\begin{equation*}
	\left\{\begin{aligned}
	s & = k_1 - k_2, \\
	t & = k_2.
	\end{aligned}\right.
	\end{equation*}
	Write $Q = \{(s,t) \in \R^2 \,\big|\, K \leq s+t \leq 2K,\, K \leq t \leq 2K \}$, which is illustrated in Fig.~\ref{fig:D-MLSchroEqu2018}.
	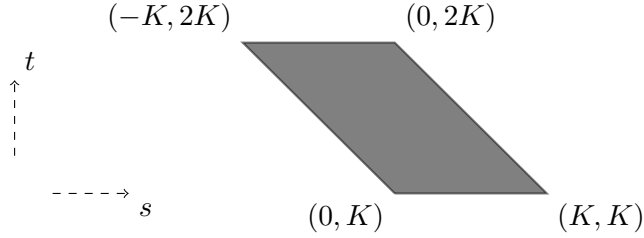
\begin{figure}[h]
		\centering
		\begin{tikzpicture}
		\draw[dashed,->] (-4.5,-1) -- (-3.5,-1) node[anchor=north west] {$s$};
		\draw[dashed,->] (-5,-0.5) -- (-5,0.5) node[anchor=south west] {$t$};
		\filldraw[opacity=0.5, thick] (0,-1) -- (2,-1) -- (0,1) -- (-2,1) -- (0,-1);
		\filldraw[black] (0,-1) node[anchor=north east] {$(0,K)$};
		\filldraw[black] (2,-1) node[anchor=north west] {$(K,K)$};
		\filldraw[black] (0, 1) node[anchor=south west] {$(0,2K)$};
		\filldraw[black] (-2,1) node[anchor=south east] {$(-K,2K)$};
		\end{tikzpicture}
		\caption{Schematic illustration of $Q$. } \label{fig:D-MLSchroEqu2018}
	\end{figure}
	Recall that $\supp \mu \subseteq D_f$. Then we have
	\begin{align}
	& \frac 1 {K^2} \int_K^{2K} \int_K^{2K} |\widehat{\mu}((k_1 - k_2) \hat x)|^2 \dif{k_1} \dif{k_2} = \frac 1 {K^2} \iint_Q |\widehat{\mu}(s \hat x)|^2 \dif{s} \dif{t} \nonumber\\
	= & \frac 1 {K^2} \int_{-K}^0 (K+s) |\widehat{\mu}(s \hat x)|^2 \dif{s} + \frac 1 {K^2} \int_0^{K} (K-s) |\widehat{\mu}(s \hat x)|^2 \dif{s} \nonumber\\
	\leq & \frac 2 K \int_\R |\widehat{\mu}(s \hat x)|^2 \dif{s}. \label{eq:muInter1-MLSchroEqu2018}
	\end{align}
	Recall that $\mu \in \mathcal{C}_c^\infty(\R^3) \subset \scrS(\R^3)$, thus $\widehat{\mu}(x)$ decays faster than the reciprocal of any polynomials, especially, $|\widehat{\mu}(s \hat x)| \leq C \agl[s]^{-1}$ for all $\forall s \in \R$, thus
	$$\frac 1 {K^2} \int_K^{2K} \int_K^{2K} |\widehat{\mu}((k_1 - k_2) \hat x)|^2 \dif{k_1} \dif{k_2} \leq \frac 2 K \int_\R C \agl[s]^{-2} \dif{s} \leq  C K^{-1},$$
	which is \eqref{eq:F0F0TermOne-MLSchroEqu2018}.
	To prove \eqref{eq:F0F0TermTwo-MLSchroEqu2018}, again we make a change of variable:
	\begin{equation*} 
	\left\{\begin{aligned}
	s & = k_1 + k_2 + \tau, \\
	t & = k_2.
	\end{aligned}\right.
	\end{equation*}
	Write $Q' = \{(s,t) \in \R^2 \,\big|\, K \leq s-t-\tau \leq 2K,\, K \leq t \leq 2K \}$.
	One can compute
	\begin{align*}
	& \frac 1 {K^2} \int_K^{2K} \int_K^{2K} |\widehat{\mu}((k_1 + k_2 + \tau) \hat x)|^2 \dif{k_1} \dif{k_2} = \frac 1 {K^2} \iint_{Q'} |\widehat{\mu}(s \hat x)|^2 \dif{s} \dif{t} \\
	= & \frac 1 {K^2} \int_{2K+\tau}^{3K+\tau} (s-2K-\tau) |\widehat{\mu}(s \hat x)|^2 \dif{s} + \frac 1 {K^2} \int_{3K+\tau}^{4K+\tau} (4K+\tau-s) |\widehat{\mu}(s \hat x)|^2 \dif{s} \\
	\leq & \frac 2 {K} \int_{2K-\tau}^{2K+\tau} |\widehat{\mu}(s \hat x)|^2 \dif{s} = \frac 2 {K} \int_\R |\widehat{\mu}(s \hat x)|^2 \dif{s} \leq \frac C K \int_\R \agl[s]^{-2} \dif{s} \leq \frac C K,
	\end{align*}
	which gives \eqref{eq:F0F0TermTwo-MLSchroEqu2018}.
	The proof is complete.
\end{proof}

For notational convenience, we shall use $\{K_j\} \in P(t)$ to signify a sequence $\{K_j\}_{j \in \mathbb{N}}$ satisfying $K_j \geq C j^t ~(j \in \mathbb{N})$ for some fixed constant $C > 0$. Throughout the rest of the paper, $\gamma$ stands for a fixed positive real number.	
The next lemma gives the asymptotic estimate of the leading-order term.

\begin{lem} \label{lemma:LeadingTermErgo-MLSchroEqu2018}
	Let $F_j(k,\hat x) ~(j=0,1)$ be defined as in \eqref{eq:Fjkx-MLSchroEqu2018}. Write
	\begin{equation*}
	X_{0,0}(K,\tau,\hat x) = \frac 1 K \int_K^{2K} k^m \overline{F_0(k,\hat x)} \cdot F_0(k+\tau,\hat x) \dif{k}.
	\end{equation*}
	Assume that $\{K_j\} \in P(1+\gamma)$. Then for any $\tau > 0$, we have
	\begin{equation} \label{eq:LeadingTermErgo-MLSchroEqu2018}
	\lim_{j \to +\infty} X_{0,0}(K_j,\tau,\hat x)
	= (2\pi)^{3/2} \widehat{\mu} (\tau \hat x) \quad \ass.
	\end{equation}
\end{lem}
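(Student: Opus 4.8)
The plan is to establish \eqref{eq:LeadingTermErgo-MLSchroEqu2018} by an ergodicity (second-moment) argument: I would show that the mean $\mathbb{E}(X_{0,0}(K,\tau,\hat x))$ converges to the claimed limit $(2\pi)^{3/2}\widehat\mu(\tau\hat x)$ as $K\to+\infty$, and that the variance $\mathrm{Var}(X_{0,0}(K,\tau,\hat x))$ decays like $K^{-1}$. The almost sure convergence along the sparse sequence $\{K_j\}\in P(1+\gamma)$ would then follow from Chebyshev's inequality together with the Borel--Cantelli lemma, the growth condition $K_j\ge Cj^{1+\gamma}$ being exactly what makes the variances summable in $j$.

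For the mean, since $F_0(k,\hat x)=\agl[f - \mathbb{E}(f), e^{-ik\hat x\cdot(\cdot)}]$ and $f$ is real-valued, the quantity $\mathbb{E}(\overline{F_0(k,\hat x)}\,F_0(k+\tau,\hat x))$ equals, by the definition of the covariance operator and its Schwartz kernel \eqref{eq:CK-MLSchroEqu2018}--\eqref{eq:KandSymbol-MLSchroEqu2018}, the pairing $\agl[\mathfrak C_f e^{ik\hat x\cdot(\cdot)}, e^{-i(k+\tau)\hat x\cdot(\cdot)}]$. Performing the same symbol manipulation as in the proof of Lemma~\ref{lemma:RkfBdd-MLSchroEqu2018} (integrating out one spatial variable to collapse the frequency integral) reduces this to $\int_{D_f} e^{-i\tau\hat x\cdot x}\, c_f(x,-(k+\tau)\hat x)\dif x$, where $c_f$ is the symbol of $\mathfrak C_f$. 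Writing $c_f(x,\xi)=\mu(x)|\xi|^{-m}+r(x,\xi)$ with $r$ of order $-m-1$, the principal part contributes $(k+\tau)^{-m}(2\pi)^{3/2}\widehat\mu(\tau\hat x)$, while $r$ contributes $O((k+\tau)^{-m-1})$ uniformly on the compact set $D_f$. Multiplying by $k^m$ and averaging, $\frac1K\int_K^{2K}k^m(k+\tau)^{-m}\dif k=\frac1K\int_K^{2K}(1+\tau/k)^{-m}\dif k\to1$ and the remainder integrates to $O(K^{-1})$, so $\mathbb{E}(X_{0,0}(K,\tau,\hat x))\to(2\pi)^{3/2}\widehat\mu(\tau\hat x)$.

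For the variance I would expand $|X_{0,0}|^2$ as a double integral over $(k_1,k_2)\in[K,2K]^2$ of the fourth-order product $k_1^m k_2^m\,\overline{F_0(k_1,\hat x)}\,F_0(k_1+\tau,\hat x)\,F_0(k_2,\hat x)\,\overline{F_0(k_2+\tau,\hat x)}$ and take expectation. Because the $F_0$'s are centered jointly Gaussian, the Isserlis/Wick formula splits the fourth moment into three pairwise products of second moments. The pairing $\mathbb{E}(\overline{F_0(k_1)}F_0(k_1+\tau))\,\mathbb{E}(F_0(k_2)\overline{F_0(k_2+\tau)})$ integrates to exactly $|\mathbb{E}(X_{0,0})|^2$ and cancels in the variance. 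The two surviving pairings involve, respectively, the Hermitian second moment $\mathbb{E}(\overline{F_0(k_1)}F_0(k_2))\sim k_2^{-m}(2\pi)^{3/2}\widehat\mu((k_2-k_1)\hat x)$ and the pseudo-covariance $\mathbb{E}(F_0(k_1+\tau)F_0(k_2))\sim k_2^{-m}(2\pi)^{3/2}\widehat\mu((k_1+k_2+\tau)\hat x)$, the latter being nonzero precisely because $f$ is real. After cancelling the $k^m$ weights against the $k^{-m}$ from the symbol (the residual weights $k_1^m(k_2+\tau)^{-m}$ staying bounded on $[K,2K]^2$), these two contributions are dominated by $\frac{(2\pi)^3}{K^2}\iint_{[K,2K]^2}|\widehat\mu((k_1-k_2)\hat x)|^2\dif k_1\dif k_2$ and $\frac{(2\pi)^3}{K^2}\iint_{[K,2K]^2}|\widehat\mu((k_1+k_2+\tau)\hat x)|^2\dif k_1\dif k_2$, which are both $\le CK^{-1}$ by \eqref{eq:F0F0TermOne-MLSchroEqu2018} and \eqref{eq:F0F0TermTwo-MLSchroEqu2018} of Lemma~\ref{lemma:LeadingTermTechnical-MLSchroEqu2018}; the lower-order symbol remainders only improve these bounds. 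Hence $\mathrm{Var}(X_{0,0}(K,\tau,\hat x))\le CK^{-1}$.

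Finally, for $\{K_j\}\in P(1+\gamma)$ one has $K_j^{-1}\le Cj^{-(1+\gamma)}$, so for each fixed $\delta>0$ Chebyshev's inequality gives $\mathbb{P}(|X_{0,0}(K_j,\tau,\hat x)-\mathbb{E}(X_{0,0}(K_j,\tau,\hat x))|>\delta)\le C\delta^{-2}j^{-(1+\gamma)}$, which is summable in $j$. Borel--Cantelli then yields $X_{0,0}(K_j,\tau,\hat x)-\mathbb{E}(X_{0,0}(K_j,\tau,\hat x))\to0$ almost surely, and intersecting the resulting full-measure events over a sequence $\delta=1/n\to0$ removes the dependence on $\delta$; combined with the mean convergence this gives \eqref{eq:LeadingTermErgo-MLSchroEqu2018}. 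I expect the main obstacle to be the variance step: correctly tracking the complex conjugates through the Wick expansion so as to recognise that the diagonal pairing reproduces $|\mathbb{E}(X_{0,0})|^2$, and to match the two off-diagonal pairings (in particular the pseudo-covariance term carrying the $k_1+k_2+\tau$ phase) to the two distinct oscillatory-average estimates of Lemma~\ref{lemma:LeadingTermTechnical-MLSchroEqu2018}, while controlling the lower-order symbol remainders uniformly in $(k_1,k_2)$.
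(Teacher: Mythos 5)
Your proposal is correct and follows essentially the same route as the paper: the mean of $X_{0,0}$ is computed through the symbol of $\mathfrak C_f$ (principal part giving $(2\pi)^{3/2}\widehat\mu(\tau\hat x)$, remainder of order $-m-1$ contributing $\mathcal{O}(K^{-1})$), the second moment is expanded by Isserlis' theorem with the diagonal pairing cancelling and the two off-diagonal pairings bounded exactly by \eqref{eq:F0F0TermOne-MLSchroEqu2018} and \eqref{eq:F0F0TermTwo-MLSchroEqu2018}, and almost sure convergence along $\{K_j\}\in P(1+\gamma)$ follows from Chebyshev plus a Borel--Cantelli argument (which the paper delegates to a cited lemma). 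The only cosmetic difference is that you bound $\mathrm{Var}(X_{0,0})$ and invoke mean convergence separately, whereas the paper estimates $\mathbb{E}\big(|X_{0,0}-(2\pi)^{3/2}\widehat\mu(\tau\hat x)|^2\big)$ directly; these differ by $|\mathbb{E}(X_{0,0})-(2\pi)^{3/2}\widehat\mu(\tau\hat x)|^2=\mathcal{O}(K^{-2})$ and are interchangeable.
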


The proof of Lemma \ref{lemma:LeadingTermErgo-MLSchroEqu2018} utilizes ergodicity.
In what follows, we may denote $X_{0,0}(K,\tau,\hat x)$ as $X_{0,0}$ for short if it is clear in the context.

\begin{proof}[Proof of Lemma \ref{lemma:LeadingTermErgo-MLSchroEqu2018}]
	
	By \eqref{eq:CK-MLSchroEqu2018}, \eqref{eq:KandSymbol-MLSchroEqu2018} and \eqref{eq:Fjkx-MLSchroEqu2018}, we can compute $\mathbb{E} \big( \overline{F_0(k,\hat x)} F_0(k+\tau,\hat x) \big)$ as follows,
	\begin{align}
	& \ \mathbb{E} \big( \overline{F_0(k,\hat x)} F_0(k+\tau,\hat x) \big) \nonumber \\
	= & \ \int_{\R^3} \Big( \int_{\R^3} K_f(y,z) e^{-ik \hat x \cdot (y-z)} \dif{z} \Big) e^{-i \tau \hat x \cdot y} \dif{y} \nonumber \\
	= & \ \int_{\R^3} c_f(y,k \hat x) e^{-i\tau \hat x \cdot y} \dif{y}
	= (2\pi)^{3/2}\, \widehat{\mu} (\tau \hat x) k^{-m} + \int_{\R^3} a(y,k \hat x) e^{i\tau \hat x \cdot y} \dif{y}, \label{eq:I0-MLSchroEqu2018}
	\end{align}
	where $a(y,\xi) := c_f(y,\xi) - \mu(y)|\xi|^{-m}$ is the difference of the full symbol and the principal symbol.
	$a(y,k \hat x)$ is compactly supported in $y$ and $|a(y,k \hat x)| \lesssim k^{-m-1}$.
	Therefore,
	\begin{align}
	& \ \mathbb{E} ( X_{0,0} )
	= \frac 1 K \int_K^{2K} k^m \mathbb{E} \big( \overline{F_0(k,\hat x)} F_0(k+\tau,\hat x) \big) \dif{k} \nonumber \\
	= & \ \frac 1 K \int_K^{2K} [(2\pi)^{3/2} \,\widehat{\mu}(\tau \hat x) + \mathcal{O}(k^{-1})] \dif{k} \nonumber \\
	= & \ (2\pi)^{3/2} \,\widehat{\mu}(\tau \hat x) + \mathcal{O}(K^{-1}), \quad K \to +\infty. \label{eq:I0EstE-MLSchroEqu2018}
	\end{align}
	
	By Isserlis' Theorem and \eqref{eq:I0EstE-MLSchroEqu2018}, and noting that $\overline{F_j(k,\hat x)} = F_j(-k,\hat x)$, $F_0(-k,-\hat x) = F_0(k,\hat x)$, one can compute
	\begin{align}
	& \mathbb{E} \big( | X_{0,0} - (2\pi)^{3/2} \widehat{\mu} (\tau \hat x) |^2 \big) \nonumber\\
	= & \frac 1 {K^2} \int_K^{2K} \int_K^{2K} \mathbb{E} \Big( \big[ k_1^m F_0(k_1+\tau,\hat x) \overline{F_0(k_1,\hat x)} - (2\pi)^{3/2} \widehat{\mu}(\tau \hat x) \big] \nonumber\\
	& \times \big[ k_2^m \overline{ F_0(k_2+\tau,\hat x) } F_0(k_2,\hat x) - (2\pi)^{3/2} \overline{ \widehat{\mu}(\tau \hat x) } \big] \Big) \dif{k_1} \dif{k_2} \nonumber\\
	\leq & \frac {(2\pi)^3} {K^2} \int\limits_K^{2K} \int\limits_K^{2K} |\widehat{\mu}((k_1 - k_2) \hat x)|^2 \dif{k_1} \dif{k_2} + \frac {(2\pi)^3} {K^2} \int\limits_K^{2K} \int\limits_K^{2K} |\widehat{\mu}((k_1 + k_2 + \tau) \hat x)|^2 \dif{k_1} \dif{k_2} \nonumber\\
	& + \big( \frac 1 {K^2} \int_K^{2K} \int_K^{2K} |\widehat{\mu}((k_1 - k_2) \hat x)|^2 \dif{k_1} \dif{k_2} \big)^{1/2} \cdot \mathcal{O}(K^{-1}) + \mathcal{O}(K^{-1}). \label{eq:X00Square-MLSchroEqu2018}
	\end{align}
	Note that the missing term involving $\widehat{\mu}((k_1 + k_2 + \tau) \hat x)$ in \eqref{eq:X00Square-MLSchroEqu2018} is counted into $\mathcal{O}(K^{-1})$ because $\widehat{\mu}((k_1 + k_2 + \tau) \hat x) \to 0 ~(k_1,k_2 \to +\infty)$. 
	By \eqref{eq:X00Square-MLSchroEqu2018} and Lemma \ref{lemma:LeadingTermTechnical-MLSchroEqu2018}, we have
	\begin{equation} \label{eq:X00Bdd-MLSchroEqu2018}
	\mathbb{E} \big( | X_{0,0} - (2\pi)^{3/2} \widehat{\mu} (\tau \hat x) |^2 \big) = \mathcal{O}(K^{-1}), \quad K \to +\infty.
	\end{equation}
	Fixing an integer $K_0 > 0$ and by Chebyshev's inequality and \eqref{eq:X00Bdd-MLSchroEqu2018} we have
	\begin{align}
	& \ P \big( \bigcup_{j \geq K_0} \{ | X_{0,0}(K_j) - (2\pi)^{3/2} \widehat \mu (\tau \hat x) | \geq \epsilon \} \big) \nonumber \\
	\leq & \ \frac 1 {\epsilon^2} \sum_{j \geq K_0} \mathbb{E} \big( | X_{0,0}(K_j) - (2\pi)^{3/2} \widehat \mu (\tau \hat x) |^2 \big) \nonumber\\
	\lesssim & \ \frac 1 {\epsilon^2} \sum_{j \geq K_0} K_j^{-1} = \frac 1 {\epsilon^2} \sum_{j \geq K_0} j^{-1-\gamma} \leq \frac 1 {\epsilon^2} \int_{K_0}^{+\infty} (t-1)^{-1-\gamma} \dif{t} = \frac 1 {\epsilon^2 \gamma} (K_0-1)^{-\gamma}. \label{eq:PX00Epsilon-MLSchroEqu2018}
	\end{align}
	Here $X_{0,0}(K_j)$ stands for $X_{0,0}(K_j, \tau, \hat x)$. By \cite[Lemma 3.2]{HLSM2019determining}, formula \eqref{eq:PX00Epsilon-MLSchroEqu2018} implies that for any fixed $\tau \geq 0$ and $\hat x \in \mathbb{S}^2$, one has
	\[
	X_{0,0}(K_j,\tau,\hat x) \to (2\pi)^{3/2} \widehat \mu (\tau \hat x) \quad \ass.
	\]
	The proof is complete.
\end{proof}

\subsection{Asymptotics of the higher-order terms} \label{subsec:AsympHighOrder-MLSchroEqu2018}

\begin{lem} \label{lemma:HOT0-MLSchroEqu2018}
	Define $F_j(k,\hat x) ~(j=0,1)$ as in \eqref{eq:Fjkx-MLSchroEqu2018}.
	For every $\hat x \in \mathbb{S}^2$ and every $k_1, k_2 \geq k$, when $k \to +\infty$, we have the following estimates:
	\begin{alignat}{2}
	\big| \mathbb{E} \big( \overline{F_1(k_2,\hat x)}  F_0(k_1,\hat x) \big) \big| & = \mathcal{O}(k^{-m-1}),& \quad \big| \mathbb{E} \big( F_0(k_1,\hat x) \cdot F_1(k_2,\hat x) \big) \big| & = \mathcal{O}(k^{-m-1}), \label{eq:hotF0F1-MLSchroEqu2018}
	\end{alignat}
	uniformly for all $\hat x$.
\end{lem}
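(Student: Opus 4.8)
The plan is to exploit that both $F_0$ and $F_1$ are linear functionals of the centred field $g := f - \mathbb{E}(f)$, so that the two expectations in \eqref{eq:hotF0F1-MLSchroEqu2018} can be expressed through the covariance operator $\mathfrak{C}_f$ and then estimated by separating a factor that carries the order $-m$ of $\mathfrak{C}_f$ (producing $k_1^{-m}$) from a factor that carries the resolvent smallness (producing $k_2^{-1}$). First I would transpose the operator $\sum_{j\geq1}(V\mathcal{R}_{k_2})^j$ appearing in \eqref{eq:Fjkx-MLSchroEqu2018} onto the plane wave: since $\mathcal{R}_{k_2}$ has the symmetric kernel $\Phi_{k_2}$ and $V$ is a real multiplication operator, $(V\mathcal{R}_{k_2})^t = \mathcal{R}_{k_2}V$, whence
\[
F_1(k_2,\hat x) = \agl[g, h_{k_2}], \qquad h_{k_2} := \sum_{j\geq1}(\mathcal{R}_{k_2}V)^j e^{-ik_2\hat x\cdot(\cdot)}.
\]
Because $\supp g\subset D_f$ almost surely and $\supp K_f\subset D_f\times D_f$, I may multiply all the plane waves by a fixed cutoff $\chi\in C_c^\infty(\mathcal D)$ equal to $1$ on a neighbourhood of $D_f$, rendering the test functions Schwartz so that the covariance identity \eqref{eq:CK-MLSchroEqu2018} applies; using that $g$ is real this turns the first expectation into $\agl[\mathfrak{C}_f\overline{h_{k_2}}, e^{-ik_1\hat x\cdot(\cdot)}]$ and the second into $\agl[\mathfrak{C}_f e^{-ik_1\hat x\cdot(\cdot)}, h_{k_2}]$.

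Next I would expand each pairing with the kernel $K_f$ and integrate out the plane-wave variable using the symbol formula \eqref{eq:SymboltoK-MLSchroEqu2018} together with the symmetry $K_f(x,y)=\overline{K_f(y,x)}$. In both cases this produces an absolutely convergent integral over $D_f$ of the form $\int_{D_f}(\cdots)(z)\,c_f(z,-k_1\hat x)\,e^{-ik_1\hat x\cdot z}\dif{z}$, where $(\cdots)$ is $\overline{h_{k_2}}$ or $h_{k_2}$. Since $\mathfrak{C}_f$ is a classical pseudodifferential operator of order $-m$, its symbol satisfies $|c_f(z,\xi)|\lesssim\agl[\xi]^{-m}$ uniformly in $z\in D_f$ and in the direction of $\xi$, so $|c_f(z,-k_1\hat x)|\lesssim k_1^{-m}$ uniformly in $\hat x$; here the hypothesis $2<m<3$ ensures that $K_f$ is locally integrable, which legitimises these Fubini manipulations. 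The remaining factor is the size of $h_{k_2}$ on $D_f$, which I expect to be $\mathcal{O}(k_2^{-1})$: the term $j=1$ equals $\mathcal{R}_{k_2}(Ve^{-ik_2\hat x\cdot(\cdot)})$ and, since $\nrm[L_{1/2+\epsilon}^2]{Ve^{-ik_2\hat x\cdot(\cdot)}}=\nrm[L_{1/2+\epsilon}^2]{V}$ is independent of $\hat x$ and $k_2$, the resolvent estimate \eqref{eq:kVb-MLSchroEqu2018} bounds it in $L_{-1/2-\epsilon}^2$ by $Ck_2^{-1}$, while each further factor $\mathcal{R}_{k_2}V$ contributes another $k_2^{-1}$ via \eqref{eq:RVb1-MLSchroEqu2018} of Lemma \ref{lem:RkVb-MLSchroEqu2018}. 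Hence for $k_2>k^*$ the Neumann series converges and $\nrm[L_{-1/2-\epsilon}^2]{h_{k_2}}\lesssim k_2^{-1}$ uniformly in $\hat x$, and on the bounded set $D_f$ this gives $\nrm[L^1(D_f)]{h_{k_2}}\lesssim k_2^{-1}$.

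Combining the two factors by H\"older's inequality then yields $|\mathbb{E}(\cdots)|\lesssim k_1^{-m}k_2^{-1}\leq k^{-m-1}$ since $k_1,k_2\geq k$, which is the asserted bound, and the estimate is uniform in $\hat x$ because both constituent bounds are. The main obstacle I anticipate is the rigorous passage from the abstract covariance pairing to the concrete integral over $D_f$: the field $g$ is a genuine rough distribution rather than a function (Lemma \ref{lem:migrRegu-MLSchroEqu2018}), the plane waves are not Schwartz, and $h_{k_2}$ is not compactly supported, so I must carefully use the compact supports of $g$ and of $K_f$, the local integrability of $K_f$ secured by $m>2$, and the smoothing of $\mathcal{R}_{k_2}$ to justify Fubini and the symbol extraction. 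The second, more bookkeeping-type, difficulty is to verify that every implicit constant is genuinely independent of $\hat x$, $k_1$ and $k_2$, which forces me to track the uniformity of the symbol bound in the covariable direction and the $\hat x$-independence of $\nrm[L_{1/2+\epsilon}^2]{Ve^{-ik_2\hat x\cdot(\cdot)}}$.
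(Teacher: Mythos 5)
Your proof is correct, and it rests on the same two pillars as the paper's: the kernel/symbol identity that converts the covariance pairing against the plane wave $e^{-ik_1\hat x\cdot(\cdot)}$ into the symbol $c_f$ evaluated at frequency $\pm k_1\hat x$ (producing the factor $k_1^{-m}$), and the resolvent estimates \eqref{eq:kVb-MLSchroEqu2018} and Lemma \ref{lem:RkVb-MLSchroEqu2018} (producing the factor $k_2^{-1}$). The genuine difference is the decomposition. You transpose the entire Neumann series onto the plane wave, writing $F_1(k_2,\hat x)=\agl[f-\mathbb{E}f,\,h_{k_2}]$ with $h_{k_2}=\sum_{j\geq1}(\mathcal{R}_{k_2}V)^je^{-ik_2\hat x\cdot(\cdot)}$, so that the single bound $\nrm[L^1(D_f)]{h_{k_2}}\lesssim k_2^{-1}$ (first term via \eqref{eq:kVb-MLSchroEqu2018} and $\nrm[L_{1/2+\epsilon}^2]{V}<\infty$, each further factor via \eqref{eq:RVb1-MLSchroEqu2018}) disposes of all orders of the series at once. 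The paper instead splits $F_1=G_1+r_2$: the term $\mathbb{E}(G_0\overline{G_1})$ is computed directly from the kernel and bounded through the $L^1$ resolvent estimate \eqref{eq:RVb3-MLSchroEqu2018}, as in \eqref{eq:hotG0G1-MLSchroEqu2018}, while the tail $r_2$ is handled through the per-term identity \eqref{eq:hotG0Gj-MLSchroEqu2018} and iterated resolvent bounds giving $k_1^{-m}k_2^{-j}$, as in \eqref{eq:hotG0r2-MLSchroEqu2018}. Your reorganization is leaner (one uniform estimate instead of two cases), whereas the paper's explicit identities \eqref{eq:hotG0Gj-MLSchroEqu2018} and \eqref{eq:hotG1Gj-MLSchroEqu2018} set up the structural template that is reused for the much harder terms $\mathbb{E}(G_j\overline{G_\ell})$ in Lemma \ref{lemma:HOT1-MLSchroEqu2018}, which your shortcut does not provide. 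Two cosmetic remarks: the transpose identity $(V\mathcal{R}_{k_2})^t=\mathcal{R}_{k_2}V$ requires only the symmetry of the kernel $\Phi_{k_2}$, not realness of $V$; and in your second expectation the symbol emerges as $\overline{c_f(\cdot,\,k_1\hat x)}$ rather than $c_f(\cdot,-k_1\hat x)$, which is harmless since only the bound $|c_f|\lesssim\agl[k_1]^{-m}$, uniform in the direction $\hat x$ and in $z\in D_f$, is used.
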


\begin{proof}
	We only prove the first asymptotic estimate in \eqref{eq:hotF0F1-MLSchroEqu2018} and the second one can be proved by following similar arguments. 
	For simplicity, we may use ${\mathcal D}_y$ to stand for $\mathcal D$ to indicate that the argument $y$ is integrated over this domain.

	In what follows we let $\hat x_1, \hat x_2 \in \mathbb{S}^2$. In this proof we may drop the arguments $k,\hat x$ if it is clear in the context. Write
	\begin{align}
	& G_0(k,\hat x) := \agl[f - \mathbb{E}f, e^{-ik \hat x \cdot (\cdot)}], \ G_j(k,\hat x) := \int e^{-ik \hat x \cdot y} \big( (V \Rk)^j (f - \mathbb{E}f) \big) (y) \dif{y}, \label{eq:Gjkx-MLSchroEqu2018} \\
	& r_j(k,\hat x) := \sum_{s \geq j} G_s(k,\hat x), \ \ j = 1,2,\cdots \label{eq:SumGjkx-MLSchroEqu2018}
	\end{align}
	thus $F_0 = G_0$ and $F_1 = r_1 = G_1 + r_2$, so we have
	\begin{equation} \label{eq:F0F1G0G1r2-MLSchroEqu2018}
	\mathbb{E} \big( F_0(k_1,\hat x_1) \cdot \overline{F_1(k_2,\hat x_2)} \big) = \mathbb{E} \big( G_0(k_1,\hat x_1) \cdot \overline{G_1(k_2,\hat x_2)} \,\big) + \mathbb{E} \big( G_0(k_1,\hat x_1) \cdot \overline{r_2(k_2,\hat x_2)} \big).
	\end{equation}
	
	To prove \eqref{eq:hotF0F1-MLSchroEqu2018}, we need to estimate $\mathbb{E} (G_0 \overline{G_1})$ and $\mathbb{E} (G_0 \overline{r_2})$. One can compute
	\begin{align}
	& \ | \mathbb{E} \big( G_0(k_1,\hat x_1) \cdot \overline{G_1(k_2,\hat x_2)} \,\big) | \nonumber\\
	= & \ \big| \mathbb{E} \big( \int_{{\mathcal D}_y} e^{-ik_1 \hat x_1 \cdot y} (f - \mathbb E f)(y) \dif{y} \times \overline{\int e^{-ik_2 \hat x_2 \cdot z} V(z) \int_{{\mathcal D}_t} \Phi(z,t) (f - \mathbb E f)(t) \dif{t} \dif{z}} \big) \big| \nonumber\\
	= & \ \big| \int e^{ik_2 \hat x_2 \cdot z} \overline V(z) \cdot \mathbb{E} \Big( \int_{{\mathcal D}_y} e^{-ik_1 \hat x_1 \cdot y} (f - \mathbb E f)(y) \dif{y} \cdot \int_{{\mathcal D}_t} \overline{\Phi}(z,t) (f - \mathbb E f)(t) \dif{t} \Big) \dif{z} \big| \nonumber\\
	= & \ \big| \int e^{ik_2 \hat x_2 \cdot z} \overline V(z) \cdot \big( \iint_{{\mathcal D} \times {\mathcal D}} K_f(t,y) e^{-ik_1 \hat x_1 \cdot y} \overline{\Phi}(z,t) \dif{y}\dif{t} \big) \dif{z} \big| \nonumber\\
	= & \ \big| \int e^{ik_2 \hat x_2 \cdot z} \overline V(z) \cdot \big( \int_ {\mathcal D} ( \mu(t)k_1^{-m} + a(t,-k_1 \hat x_1) ) e^{-ik_1 \hat x_1 \cdot t} \overline{\Phi}_{k_2}(z,t) \dif{t} \big) \dif{z} \big| \nonumber\\
	= & \ \nrm[L^1(\R^3)]{V \mathcal{R}_{k_2} (\mu(t) k_1^{-m} + \overline{a(\cdot,-k_1 \hat x_1)} e^{ik_1 \hat x_1 \cdot (\cdot)} \chi_{\mathcal D})} \nonumber \\
	\lesssim & \ k_2^{-1} \nrm[L_{1/2+\epsilon}^2(\R^3)]{(\mu(t) k_1^{-m} + \overline{a(\cdot,-k_1 \hat x_1)} e^{ik_1 \hat x_1 \cdot (\cdot)} \chi_{\mathcal D})} \nonumber \\
	\lesssim & \ k_2^{-1} k_1^{-m}, \quad k \to +\infty. \label{eq:hotG0G1-MLSchroEqu2018}
	\end{align}
	To estimate $\mathbb{E} \big( G_0(k_1,\hat x_1) \cdot \overline{r_2(k_2,\hat x_2)} \,\big)$ we first prove for $j > 1$,
	\begin{align} 
	\mathbb{E} \big( G_0(k_1,\hat x_1) \cdot \overline{G_j(k_2,\hat x_2)} \,\big) & = \overline{ \int e^{-ik_2 \hat x_2 \cdot z} (V \mathcal{R}_{k_2})^j \big( c_f(\cdot,k_1 \hat x_1) \,e^{ik_1 \hat x_1 \cdot (\cdot)} \,\chi_{\mathcal D} \big) (z) \dif{z} }, \label{eq:hotG0Gj-MLSchroEqu2018}\\
	\mathbb{E} \big( G_1(k_1,\hat x_1) \cdot \overline{G_j(k_2,\hat x_2)} \big) & = \overline{ \int e^{-ik_2 \hat x_2 \cdot z} \big( (V \mathcal{R}_{k_2})^j (\chi_{\mathcal D} \mathfrak C_f \overline{ \calR_{k_1}(V e^{-ik_1 \hat x_1 \cdot (\cdot)})}) \big)(z) \dif{z} }. \label{eq:hotG1Gj-MLSchroEqu2018}
	\end{align}
	We have
	\begin{align}
	& \mathbb{E} \big( \overline{G_0(k_1,\hat x_1)} \cdot G_j(k_2,\hat x_2) \big) \nonumber\\
	= & \mathbb{E} \big( \agl[f - \mathbb E f, e^{ik_1 \hat x_1 \cdot (\cdot)}] \cdot \int_{\mathcal D} e^{-ik_2 \hat x_2 \cdot z} (V \mathcal{R}_{k_2})^{j-1} \big( V(\cdot) \agl[(f - \mathbb E f)(y), \Phi_{k_2}(y,\cdot)] \big)(z) \dif{z} \big) \nonumber\\
	= & \int e^{-ik_2 \hat x_2 \cdot z} (V \mathcal{R}_{k_2})^{j-1} \Big( V(\cdot) \mathbb E \big( \agl[(f - \mathbb E f)(t), e^{ik_1 \hat x_1 \cdot t}] \agl[(f - \mathbb E f)(y), \Phi_{k_2}(y,\cdot)] \big) \Big)(z) \dif{z} \nonumber\\
	= & \int e^{-ik_2 \hat x_2 \cdot z} (V \mathcal{R}_{k_2})^j \big( c_f(\cdot,k_1 \hat x_1) e^{ik_1 \hat x_1 \cdot (\cdot)} \chi_{\mathcal D} \big) (z) \dif{z}. \label{eq:hotG0GjInter-MLSchroEqu2018}
	\end{align}
	By taking the conjugate of \eqref{eq:hotG0GjInter-MLSchroEqu2018}, we arrive at \eqref{eq:hotG0Gj-MLSchroEqu2018}. 
	Then to prove \eqref{eq:hotG1Gj-MLSchroEqu2018} one can compute
	{\small \begin{align}
		& \mathbb{E} \big( \overline{G_1(k_1,\hat x_1)} \cdot G_j(k_2,\hat x_2) \big) \nonumber\\
		= & \mathbb{E} \big( \int e^{ik_1 \hat x_1 \cdot x} \overline{ (V \mathcal{R}_{k_1} (f - \mathbb E f)) (x) } \dif{x} \cdot \int e^{-ik_2 \hat x_2 \cdot z} (V \mathcal{R}_{k_2})^j (f - \mathbb E f) (z) \dif{z} \big) \nonumber\\
		= & \int e^{-ik_2 \hat x_2 \cdot z} (V \mathcal{R}_{k_2})^{j-1} \Big( V(\cdot) \agl[(\mathfrak C_f \overline{ \calR_{k_1}(V e^{-ik_1 \hat x_1 \cdot (\cdot)})})(y), \chi_{\mathcal D}(y) \Phi_{k_2}(y,\cdot)] \Big)(z) \dif{z} \nonumber\\
		= & \int e^{-ik_2 \hat x_2 \cdot z} \big( (V \mathcal{R}_{k_2})^j (\chi_{\mathcal D} \mathfrak C_f \overline{ \calR_{k_1}(V e^{-ik_1 \hat x_1 \cdot (\cdot)})}) \big)(z) \dif{z}. \label{eq:hotG1GjInter-MLSchroEqu2018}
		\end{align}}
	We arrive at \eqref{eq:hotG1Gj-MLSchroEqu2018} by taking the conjugate of \eqref{eq:hotG1GjInter-MLSchroEqu2018}.
	By applying \eqref{eq:hotG0Gj-MLSchroEqu2018} we have
	\begin{align}
	& \big| \mathbb{E} \big( G_0(k_1,\hat x_1) \cdot \overline{r_2(k_2,\hat x_2)} \big) \big|
	\leq \sum_{j \geq 2} \big| \mathbb{E} \big( G_0(k_1,\hat x_1) \cdot \overline{G_j(k_2,\hat x_2)} \big) \big| \nonumber\\
	\leq & \sum_{j \geq 2} \nrm[L^1(\R^3)]{ (V \mathcal{R}_{k_2})^j \big( c_f(\cdot,k_1 \hat x_1) e^{ik_1 \hat x_1 \cdot (\cdot)} \chi_{\mathcal D} \big) } \nonumber\\
	\leq & C k_1^{-m} \cdot \sum_{j \geq 2} k_2^{-j} \nrm[L_{1/2+\epsilon}^2(\R^3)]{ k_1^mc_f(\cdot,k_1 \hat x_1) \chi_{\mathcal D}} \nonumber\\
	= & \mathcal{O}(k_1^{-m} k_2^{-2}), \quad k \to +\infty. \label{eq:hotG0r2-MLSchroEqu2018}
	\end{align}
	By \eqref{eq:F0F1G0G1r2-MLSchroEqu2018}, \eqref{eq:hotG0G1-MLSchroEqu2018} and \eqref{eq:hotG0r2-MLSchroEqu2018}, the formula \eqref{eq:hotF0F1-MLSchroEqu2018} is proved.
\end{proof}

Before we analyze the behavior of $\mathbb{E} \big( \overline{F_1(k_2,\hat x)} F_1(k_1,\hat x) \big)$ in terms of $k_1$ and $k_2$, we first present an auxiliary lemma that shall be useful in the proof of Lemma \ref{lemma:HOT1-MLSchroEqu2018}. In the sequel, we denote $\diam \Omega := \sup\limits_{x, x' \in \Omega} \{ |x - x'|\}$.

\begin{lem} \label{lem:intbdd-MLSchroEqu2018}
	Assume $\Omega$ is a bounded domain in $\Rn~(n \geq 1)$.
	For $\forall \alpha, \beta \in \R$ such that $\alpha < n$ and $\beta < n$, and for $\forall p \in \Rn \backslash \{0\}$, there exists a constant $C_{\alpha,\beta}$ independent of $p$ and $\Omega$ such that
	\begin{equation} \label{eq:intbdd-MLSchroEqu2018}
	\int_\Omega |t|^{-\alpha} |t - p|^{-\beta} \dif t 
	\leq C_{\alpha,\beta} \times
	\begin{cases} |p|^{n - \alpha - \beta} + (\diam  \Omega)^{n - \alpha - \beta}, & \alpha + \beta \neq n, \\
	\ln \frac 1 {|p|} + \ln (\diam \Omega) +C_{\alpha, \beta}, & \alpha + \beta = n.
	\end{cases}
	\end{equation}
\end{lem}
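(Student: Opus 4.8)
The plan is to isolate the two integrable singularities of the integrand --- at $t=0$ and at $t=p$, both integrable precisely because $\alpha<n$ and $\beta<n$ --- and to let $\diam\Omega$ furnish the outer cut-off that controls the polynomial behaviour away from them. Concretely, I would split $\Omega$ into the ball $B_1=\{\,t:|t|<|p|/2\,\}$ about the origin, the ball $B_2=\{\,t:|t-p|<|p|/2\,\}$ about $p$, and the remainder $B_3=\Omega\setminus(B_1\cup B_2)$; note that $B_1$ and $B_2$ are disjoint by the triangle inequality, since $|t|<|p|/2$ and $|t-p|<|p|/2$ would force $|p|\le|t|+|t-p|<|p|$.

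On $B_1$ the triangle inequality gives $|p|/2\le|t-p|\le 3|p|/2$, hence $|t-p|^{-\beta}\simeq|p|^{-\beta}$ with two-sided constants that do not depend on the sign of $\beta$; the contribution is therefore controlled by $|p|^{-\beta}\int_{|t|<|p|/2}|t|^{-\alpha}\dif t\simeq|p|^{-\beta}|p|^{\,n-\alpha}=|p|^{\,n-\alpha-\beta}$, where the radial integral converges because $\alpha<n$. The term from $B_2$ is handled identically after interchanging the roles of $(0,\alpha)$ and $(p,\beta)$, using $|p|/2\le|t|\le 3|p|/2$ there, and again yields a multiple of $|p|^{\,n-\alpha-\beta}$.

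The region $B_3$ carries the dichotomy. There both $|t|\ge|p|/2$ and $|t-p|\ge|p|/2$, and a short case analysis (splitting on whether $|t|\le|p|$ or not) shows $|t|\simeq|t-p|$, so the integrand is comparable to $|t|^{-(\alpha+\beta)}$. Passing to polar coordinates and using $|t|\le\diam\Omega$ on $\Omega$ (which holds in the setting where this lemma is applied, the origin lying in $\Omega$), the contribution is bounded by $\int_{|p|/2}^{\diam\Omega}r^{\,n-1-(\alpha+\beta)}\dif r$. When $\alpha+\beta\neq n$ the exponent $n-1-(\alpha+\beta)\neq-1$, and integrating gives a constant multiple of $(\diam\Omega)^{\,n-\alpha-\beta}+|p|^{\,n-\alpha-\beta}$ (the first term dominating when $n-\alpha-\beta>0$, the second when $n-\alpha-\beta<0$); when $\alpha+\beta=n$ the exponent is exactly $-1$ and one instead picks up $\ln(\diam\Omega)-\ln|p|$, which is the logarithmic alternative in \eqref{eq:intbdd-MLSchroEqu2018}. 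Collecting the three contributions gives the stated bound, with the constant depending only on $\alpha$, $\beta$ and the dimension $n$.

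An equivalent and slightly cleaner route I would keep in reserve is the rescaling $t=|p|s$, which factors out $|p|^{\,n-\alpha-\beta}$ and reduces everything to the normalized integral $\int_{\Omega/|p|}|s|^{-\alpha}|s-\hat p|^{-\beta}\dif s$ with $\hat p=p/|p|$; the same three-piece split then produces the annular integral $\int_{1/2}^{\diam\Omega/|p|}r^{\,n-1-(\alpha+\beta)}\dif r$, and multiplying back by $|p|^{\,n-\alpha-\beta}$ reproduces both alternatives. The one genuinely delicate point is the borderline exponent $\alpha+\beta=n$: this is exactly where the power law in $B_3$ degenerates into a logarithm, and one must track that the lower endpoint of order $|p|$ of the radial integral is precisely what converts into the $\ln(1/|p|)$ term. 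Everything else reduces to checking that the comparisons $|t-p|\simeq|p|$ and $|t|\simeq|t-p|$ are genuinely two-sided, so that they remain valid for negative $\alpha$ or $\beta$, and that no constant secretly depends on $p$ or on the position of $\Omega$.
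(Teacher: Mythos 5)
Your proof is correct and follows essentially the same route as the paper's: the paper likewise splits $\Omega$ into three regions --- a ball $B(p,|p|/2)$ about $p$, the set $B(0,2|p|)\setminus B(p,|p|/2)$ about the origin, and the remainder on which $|t|/2\le|t-p|\le 3|t|/2$ --- and concludes with the same radial integral from $\sim|p|$ to $\diam\Omega$ that yields the power/logarithm dichotomy. The only cosmetic differences are your symmetric choice of two balls of radius $|p|/2$ (giving comparability constant $3$ rather than $3/2$ on the outer region), and your explicit flagging of the implicit assumption, shared by the paper, that the origin lies in $\Omega$ so that $|t|\le\diam\Omega$ there.
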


\begin{rem} \label{rem:intbdd-MLSchroEqu2018}
	Formula \eqref{eq:intbdd-MLSchroEqu2018} also holds when $p = 0$ and $\alpha + \beta \neq n$.
	When $p \neq 0$ and $\alpha + \beta \geq n$, the upper bound of the integral in \eqref{eq:intbdd-MLSchroEqu2018} goes to infinity as $p$ approaches the origin.
	When $p = 0$ and $\alpha + \beta \geq n$, the integral is ill-defined, i.e.~the Cauchy principal value of the integral is infinity. 
	Hence formula \eqref{eq:intbdd-MLSchroEqu2018} gives a description about how fast (in terms of $|p|$) the integral goes to infinity as $p$ approaches the origin.
\end{rem}

\begin{proof}[Proof of Lemma~\ref{lem:intbdd-MLSchroEqu2018}]
	We use $B(0,\diam \Omega)$ to signify the ball centering at the point $0$ and of radius $\diam(\Omega)$.
	We divide $\Omega$ into three parts:
	$\Omega_1 := B(p,|p|/2)$,
	$\Omega_2 := B(0,2|p|) \backslash \Omega_1$
	and $\Omega_3 := \Omega \backslash (\Omega_1 \cup \Omega_2)$.
	Noting that $\beta < n$, we can compute
	\begin{align} 
	\int_{\Omega_1} |t|^{-\alpha} |t - p|^{-\beta} \dif t
	& \leq \int_{\Omega_1} |p/2|^{-\alpha} |t - p|^{-\beta} \dif t
	= 2^{\alpha} |p|^{-\alpha} \int_{B(0,|p|/2)} |t|^{-\beta} \dif t \nonumber \\
	& = C_{\alpha, \beta} |p|^{n - \alpha - \beta}. \label{eq:intbdd1-MLSchroEqu2018}
	\end{align}
	Then we compute the integral over $\Omega_2$ as follows (noting that $\alpha < n$),
	\begin{align}
	\int_{\Omega_2} |t|^{-\alpha} |t - p|^{-\beta} \dif t
	& \leq \int_{\Omega_2} |t|^{-\alpha} |p/2|^{-\beta} \dif t
	= 2^\beta |p|^{-\beta} \int_{\Omega_2} |t|^{-\alpha} \dif t \nonumber \\
	& \leq 2^\beta |p|^{-\beta} \int_{B(0,2|p|)} |t|^{-\alpha} \dif t 
	= C_{\alpha,\beta} |p|^{n - \alpha - \beta}. \label{eq:intbdd2-MLSchroEqu2018}
	\end{align}
	
	We claim that $|t|/2 \leq |t - p| \leq 3|t|/2$ for $\forall t \in \Omega_3$. 
	This can be seen in the following way: 
	fix a quantity $T > 2|p|$, then $p$ is an inner point of the ball $B(0,T)$.
	The distance between $t$ and $p$ is $|t - p|$.
	For every $t$ such that $|t| = T$, 
	the longest distance between $t$ and $p$ is $T + |p|$ while the shortest distance is $T - |p|$, thus $T - |p| \leq |t - p| \leq T + |p|$ holds.
	Because $T > 2|p|$ and $|t| = T$, we obtain $|t|/2 \leq |t - p| \leq 3|t|/2$ for $\forall t \in \Omega_3$.
	The quantity $\diam \Omega$ is finite because $\Omega$ is bounded.
	Therefore, the integral over $\Omega_3$ can be  computed as follows,
	\begin{align} 
	\int_{\Omega_3} |t|^{-\alpha} |t - p|^{-\beta} \dif t
	& \leq \int_{\Omega_3} |t|^{-\alpha} (|t|/2)^{-\beta} \dif t
	\leq 2^{|\beta|} \int_{ \{2|p| \leq |t| \leq \diam \Omega\} } |t|^{-\alpha-\beta} \dif t \nonumber \\
	& \leq
	\begin{cases}
	\frac {2^{|\beta|}} {n - \alpha - \beta} [(\diam  \Omega)^{n - \alpha - \beta} - |p|^{n - \alpha - \beta}], & \alpha + \beta \neq n, \\
	2^{|\beta|} [\ln \frac 1 {|p|} - \ln 2 + \ln (\diam \Omega)], & \alpha + \beta = n,
	\end{cases} \nonumber \\
	& \leq C_{\alpha,\beta} \times
	\begin{cases} |p|^{n - \alpha - \beta} + (\diam  \Omega)^{n - \alpha - \beta}, & \alpha + \beta \neq n, \\
	\ln \frac 1 {|p|} + \ln (\diam \Omega) - \ln 2, & \alpha + \beta = n.
	\end{cases}
	\label{eq:intbdd3-MLSchroEqu2018}
	\end{align}
	Summing up \eqref{eq:intbdd1-MLSchroEqu2018}, \eqref{eq:intbdd2-MLSchroEqu2018} and \eqref{eq:intbdd3-MLSchroEqu2018}, we obtain \eqref{eq:intbdd-MLSchroEqu2018}.
	The proof is complete. 
\end{proof}

\begin{lem} \label{lemma:HOT1-MLSchroEqu2018}
	Define $F_j(k,\hat x) ~(j=0,1)$ as in \eqref{eq:Fjkx-MLSchroEqu2018}. For every $\hat x \in \mathbb{S}^2$ and every $k_1, k_2 \geq k$, when $k \to +\infty$, we have the following estimates:
	\begin{alignat}{2}
	\big| \mathbb{E} \big( \overline{F_1(k_2,\hat x)} F_1(k_1,\hat x) \big) \big| & = \mathcal{O}(k^{-3}),& \quad \big| \mathbb{E} \big( F_1(k_1,\hat x) \cdot F_1(k_2,\hat x) \big) \big| & = \mathcal{O}(k^{-3}), \label{eq:hotF1F1-MLSchroEqu2018}
	\end{alignat}
	uniformly for all $\hat x$.
\end{lem}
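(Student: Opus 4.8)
The plan is to expand $F_1=\sum_{j\ge 1}G_j$ as in \eqref{eq:Gjkx-MLSchroEqu2018}--\eqref{eq:SumGjkx-MLSchroEqu2018}, so that
\[
\mathbb{E}\big(\overline{F_1(k_2,\hat x)}F_1(k_1,\hat x)\big)=\sum_{i\ge 1}\sum_{j\ge 1}\mathbb{E}\big(\overline{G_j(k_2,\hat x)}G_i(k_1,\hat x)\big),
\]
and to reduce every summand to a bilinear expression in the covariance plus an amplitude. For each term I would invoke the operator identity \eqref{eq:hotG1Gj-MLSchroEqu2018} and the manipulation that produced it (which generalizes verbatim to arbitrary $i,j\ge 1$): peeling off the $i$ factors of $V\calR_{k_1}$ and the $j$ factors of $V\calR_{k_2}$, the quantity $\mathbb{E}(\overline{G_j(k_2)}G_i(k_1))$ becomes an integral of $(V\calR_{k_2})^{j}$ acting on $\chi_{\mathcal D}\,\mathfrak C_f$ applied to a resolvent image of $Ve^{-ik_1\hat x\cdot(\cdot)}$. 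Taking $L^1$ norms and applying the resolvent bounds \eqref{eq:RVb3-MLSchroEqu2018} and \eqref{eq:RVb2-MLSchroEqu2018} of Lemma~\ref{lem:RkVb-MLSchroEqu2018} contributes one factor $k^{-1}$ per resolvent, so the double geometric series over $i,j$ collapses and everything is governed by the leading term $i=j=1$.

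To treat that leading term cleanly I would use $G_1(k,\hat x)=\agl[f-\mathbb{E}f,\psi_{k}]$ with $\psi_{k}:=\calR_{k}(Ve^{-ik\hat x\cdot(\cdot)})$, which follows from the symmetry of $\Phi_k$ in its two arguments. Then, by the definition of $\mathfrak C_f$ and \eqref{eq:CK-MLSchroEqu2018},
\[
\mathbb{E}\big(\overline{G_1(k_2,\hat x)}G_1(k_1,\hat x)\big)=\iint_{\mathcal D\times\mathcal D}K_f(x,y)\,\psi_{k_2}(x)\,\psi_{k_1}(y)\dif x\dif y .
\]
The crucial point --- and what I expect to be the main obstacle --- is that the naive estimate is not good enough: bounding $|K_f(x,y)|\lesssim|x-y|^{-2}$ (as in Lemma~\ref{lemma:RkfBdd-MLSchroEqu2018}, valid since $m>2$) and $\|\psi_{k_j}\|_{L^2(\mathcal D)}\lesssim k_j^{-1}$ (from \eqref{eq:kVb-MLSchroEqu2018}) yields only $\mathcal{O}(k^{-2})$, whereas we must reach $\mathcal{O}(k^{-3})$. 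This is essential rather than cosmetic: after the $k^m$-weighting in the recovery functional one has $k^{m}\cdot k^{-3}=k^{m-3}\to 0$ because $m<3$, while $k^{m}\cdot k^{-2}=k^{m-2}\to\infty$ because $m>2$, so the elementary bound is useless and the $F_1$--$F_1$ contribution must genuinely decay faster than the leading $F_0$--$F_0$ term of size $k^{-m}$.

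To gain the missing power of $k$ I would exploit oscillation rather than mere size. A stationary-phase analysis of $\psi_{k}(y)=\int \Phi_k(y,w)V(w)e^{-ik\hat x\cdot w}\dif w$, legitimate thanks to the $C^5$-regularity in \eqref{eq:asp-MLSchroEqu2018}, shows that $\psi_k$ is an oscillatory function of amplitude $\mathcal{O}(k^{-1})$ localized at spatial frequencies of size $\sim k$; consequently the classical order-$(-m)$ operator $\mathfrak C_f$, whose symbol $c_f(x,\xi)=\mu(x)|\xi|^{-m}+a(x,\xi)$ with $|a|\lesssim|\xi|^{-m-1}$ decays on this frequency support, produces genuine extra decay in $\mathfrak C_f\psi_{k_2}$ beyond the trivial $L^2$-boundedness. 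The precise power is delicate because the two oscillatory factors $\psi_{k_1},\psi_{k_2}$ interact, and the residual singular spatial integrals generated by writing out the kernels $K_f$ (of order $|x-y|^{-2}$) and $\Phi$ (of order $|y-w|^{-1}$) are exactly of the type controlled by Lemma~\ref{lem:intbdd-MLSchroEqu2018}; combining the stationary-phase bookkeeping, the symbol decay, and these singular-integral bounds, I expect $|\mathbb{E}(\overline{G_1(k_2)}G_1(k_1))|\lesssim k^{-3}$, and more generally $|\mathbb{E}(\overline{G_j(k_2)}G_i(k_1))|\lesssim k^{-(i+j+1)}$, so that summing over $i,j\ge 1$ gives the first estimate in \eqref{eq:hotF1F1-MLSchroEqu2018}, dominated by the $i=j=1$ term. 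The second, non-conjugated estimate follows by the same reduction applied to the corresponding identity for $\mathbb{E}(G_i(k_1)G_j(k_2))$. The hard part throughout is precisely this amplitude estimate: squeezing a real improvement over the elementary $L^2$ bound, which is what forces the joint use of the stationary-phase principle and the pseudodifferential symbol calculus.
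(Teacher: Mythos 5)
Your proposal takes essentially the same route as the paper's proof, and its structural skeleton is correct: the expansion $F_1=\sum_{j\ge1}G_j$, the reduction of each cross term to the covariance operator sandwiched between resolvent images of modulated plane waves (the paper's identities \eqref{eq:hotGj-MLSchroEqu2018}--\eqref{eq:hotGjGl-MLSchroEqu2018}), the resolvent-peeling via Lemma~\ref{lem:RkVb-MLSchroEqu2018} giving $k_1^{-i}k_2^{-j}$ per term so that the tail $i+j\ge 3$ sums to $\mathcal{O}(k^{-3})$, and---most importantly---the correct diagnosis that the whole difficulty is concentrated in the single term $i=j=1$, for which resolvent bounds alone give only $\mathcal{O}(k^{-2})$, useless after the $k^m$-weighting since $m>2$. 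Your proposed mechanism for that term (stationary phase, justified by the $C^5$-regularity in \eqref{eq:asp-MLSchroEqu2018}, combined with the order-$(-m)$ symbol decay of $\mathfrak C_f$ at the frequency $\sim k$ carried by the two oscillatory factors, with Lemma~\ref{lem:intbdd-MLSchroEqu2018} controlling residual singular integrals) is exactly the mechanism the paper uses.

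The genuine gap is that this decisive estimate is asserted (``I expect $\lesssim k^{-3}$'') rather than proved, and it is precisely where all of the paper's technical work lies. The paper does not estimate $\mathfrak C_f\psi_{k_2}$ directly; it uses the symbol calculus (\cite[Lemma 18.2.1]{hormander1985analysisIII}) and the change of variables $S=s-t$, $T=s+t$ to rewrite $\mathbb{E}\big(G_1(k_1,\hat x)\overline{G_1(k_2,\hat x)}\big)$ as $\int\eta_2(T)e^{i\theta_2\cdot T}c_3(T,\theta_1)\dif{T}$ with $\theta_1=(k_1+k_2)\hat x/2$, so that the symbol contributes a clean factor $\agl[\theta_1]^{-m}\lesssim k^{-m}$, while the amplitude is the product $\mathcal G(T/2,k_1,\hat x)\,\overline{\mathcal G}(T/2,k_2,\hat x)$ with $\mathcal G(s,k,\hat x)=\int e^{-ik(\hat x\cdot y-|y|)}|y|^{-1}V(y+s)\dif{y}$, which is (up to modulation and a constant) your $\psi_k$. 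Two points are finer than your sketch suggests. First, because $V$ is \emph{not} compactly supported, the pointwise amplitude bound $\mathcal{O}(k^{-1})$ you assert is not what one gets: the paper must insert a weight $\agl[y]^{1+\sigma}$, split the radial integral at $r=k^{-1/2}$, and cover $\mathbb{S}^2$ by patches (nondegenerate stationary phase on the patches containing the critical directions, where the $C^5$-regularity is consumed, and non-stationary integration by parts elsewhere), yielding only $\mathcal G=\mathcal{O}(k^{-1+\sigma/2})$ uniformly in $s$ (formula \eqref{eq:fs-MLSchroEqu2018}). With $\sigma=1/3$ this gives $k^{-m}\cdot k^{-2+\sigma}+k^{-m-1}=\mathcal{O}(k^{-m-1})$, which is $\mathcal{O}(k^{-3})$ only because $m>2$; so your conclusion survives, but through a narrower margin than your heuristic indicates, and establishing even this weakened amplitude bound is the bulk of the proof. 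Second, your stronger claim $|\mathbb{E}(\overline{G_j(k_2)}G_i(k_1))|\lesssim k^{-(i+j+1)}$ for all $i,j\ge1$ is both unproven and unnecessary: for the tail the paper proves only $k_1^{-i}k_2^{-j}$ (via the integration-by-parts bound $|I(z,y)|\lesssim k_1^{-1}k_2^{-1}(|z-y|^{2-\tau}+C)$ together with Lemma~\ref{lem:intbdd-MLSchroEqu2018} and the resolvent estimates), and that suffices.
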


\begin{proof}
	We only prove the first asymptotic estimate in \eqref{eq:hotF1F1-MLSchroEqu2018} and the second one can be proved by following  similar arguments.		
	We continue to use the notation $G_j$ defined in \eqref{eq:Gjkx-MLSchroEqu2018}.
	To prove the statement, the following two identities are needed:
	\begin{equation} \label{eq:hotGj-MLSchroEqu2018}
	G_j(k,\hat x) = \agl[(f - \mathbb E f)(s), \int e^{-ik \hat x \cdot y} {\big[} (V\Rk)^{j-1} (V(\cdot) \Phi(s,\cdot)) {\big]} (y) \dif{y}] \quad (j \geq 1),
	\end{equation}
	{\small \begin{align} \label{eq:hotGjGl-MLSchroEqu2018}
		& \ \mathbb{E} \big( G_j(k_1,\hat x_1) \cdot \overline{G_\ell(k_2,\hat x_2)} \big) \nonumber\\
		= & \int e^{ik_2 \hat x_2 \cdot z} \bigg\{ (V \mathcal{R}_{k_2})^{\ell-1} \Big( \int e^{-ik_1 \hat x_1 \cdot y} \big[ (V \mathcal{R}_{k_1})^{j-1} (V(1) \overline V (2) I(2,1)) \big] (y) \dif{y} \Big) \bigg\}(z) \dif{z} \ (j,\ell \geq 1),
		\end{align}}
	where the operation $\agl[\cdot,\cdot]$ in \eqref{eq:hotGj-MLSchroEqu2018} is in terms of the variable $s$, and
	\begin{equation} \label{eq:DefI-MLSchroEqu2018}
	I(x,y) := \iint_{D_f \times D_f} K_f(s,t) \Phi(s-y) \overline \Phi(t-x) \dif{s} \dif{t}.
	\end{equation}
	In \eqref{eq:hotGjGl-MLSchroEqu2018}, with some abuse of notations, we use ``1'' (resp.~``2'') to represent the variable that the operator $V \calR_{k_1}$ (resp.~$V \calR_{k_2}$) acts on.
	
	To prove \eqref{eq:hotGj-MLSchroEqu2018}, one can compute
	\begin{align} \label{eq:VRkjf-MLSchroEqu2018}
	[(V \Rk)^j f](x)
	& = [(V \Rk)^{j-1}((V \Rk)f)](x) = \big[ (V \Rk)^{j-1}( V(\cdot) \agl[f(s), \Phi_k(s,\cdot)] ) \big] (x) \nonumber\\
	& = \agl[f(s), {\big[} (V\Rk)^{j-1} (V(\cdot) \Phi(s,\cdot)) {\big]} (x)].
	\end{align}
	By \eqref{eq:Gjkx-MLSchroEqu2018} and \eqref{eq:VRkjf-MLSchroEqu2018}, we arrive at \eqref{eq:hotGj-MLSchroEqu2018}.
	To prove \eqref{eq:hotGjGl-MLSchroEqu2018}, one can compute
	\begin{align*}
	& \ \mathbb{E} \big( G_j(k_1,\hat x_1) \cdot \overline{G_\ell(k_2,\hat x_2)} \big) \\
	= & \ \mathbb{E} \Big( \Agl[(f - \mathbb E f)(s), \int e^{-ik_1 \hat x_1 \cdot y} {\big[} (V\mathcal{R}_{k_1})^{j-1} (V(\cdot) \Phi(s,\cdot)) {\big]} (y) \dif{y}] \nonumber\\
	& \ \ \ \cdot \Agl[(f - \mathbb E f)(t), \int e^{ik_2 \hat x_2 \cdot z} {\big[} (V\mathcal{R}_{k_2})^{\ell-1} (\overline V(\cdot) \overline \Phi(t,\cdot)) {\big]} (z) \dif{z}] \Big) \\
	= & \iint_{{D_f} \times {D_f}} \int e^{-ik_1 \hat x_1 \cdot y} {\big[} (V\mathcal{R}_{k_1})^{j-1} (K(s,t) V(\cdot) \Phi(s,\cdot)) {\big]} (y) \dif{y} \nonumber\\
	& \ \ \cdot \int e^{ik_2 \hat x_2 \cdot z} {\big[} (V\mathcal{R}_{k_2})^{\ell-1} (\overline V(\cdot) \overline \Phi(t,\cdot)) {\big]} (z) \dif{z} \dif{s} \dif{t} \\
	= & \int e^{ik_2 \hat x_2 \cdot z} \bigg\{ (V \mathcal{R}_{k_2})^{\ell-1} \Big( \int e^{-ik_1 \hat x_1 \cdot y} \big[ (V \mathcal{R}_{k_1})^{j-1} (V(1) \overline V (2) I(2,1)) \big] (y) \dif{y} \Big) \bigg\}(z) \dif{z}.
	\end{align*}
	Thus, \eqref{eq:hotGjGl-MLSchroEqu2018} is proved.
	
	\smallskip

	Note that
	{\small \begin{align}
		\mathbb{E} \big( F_1(k_1,\hat x_1) \cdot \overline{F_1(k_2,\hat x_2)} \big) & = \mathbb{E} \big( G_1(k_1,\hat x_1) \cdot \overline{G_1(k_2,\hat x_2)} \,\big) + \sum_{\substack{j+\ell \geq 3\\j,\ell \geq 1}} \mathbb{E} \big( G_j(k_1,\hat x_1) \cdot \overline{G_\ell(k_2,\hat x_2)} \,\big). \label{eq:F1F1G1GjGl-MLSchroEqu2018}
		\end{align}}
	Next we estimate $\mathbb{E} (G_1 \overline{G_1})$ and $\mathbb{E} (G_j \overline{G_\ell}) \,(j+\ell \geq 3, \,j,\ell \geq 1)$ in different manners.
	
	Recall the definition of $\mathcal D$ given in \eqref{eq:calD-MLSchroEqu2018}.
	We denote $\widetilde{\mathcal D} := \{x + x', x - x' \,;\, x, x' \in \mathcal D \}$.
	To estimate $\mathbb{E} (G_1 \overline{G_1})$, we fix real-valued cut-off functions $\eta_i \in \mathcal{C}_c^\infty(\R^3) \,(i=1,2)$ satisfying
	\begin{equation} \label{eq:etaj-MLSchroEqu2018}
	\begin{cases}
	\supp \eta_i \subset \widetilde{\mathcal D}, \ i = 1,2, \\
	\eta_1 = 1 \text{~in~} D_f, \\
	\eta_2 = 1 \text{~in~} \{s+t \in \R^3 \,;\, s,t \in D_f \}.
	\end{cases}
	\end{equation}
	With the help of \eqref{eq:hotGjGl-MLSchroEqu2018} and \eqref{eq:KtoSymbol-MLSchroEqu2018} and by using \cite[Lemma 18.2.1]{hormander1985analysisIII} repeatedly, one have
	{\small \begin{align}
		& \mathbb{E} \big( G_1(k_1,\hat x_1) \cdot \overline{G_1(k_2,\hat x_2)} \big) \nonumber\\
		= & \int e^{ik_2 \hat x_2 \cdot z} \int e^{-ik_1 \hat x_1 \cdot y} V(y) \overline V (z) I(z,y) \dif{y} \dif{z} \nonumber\\
		\simeq & \iint \eta_2(s+t) \eta_1(s) \eta_1(t) \big( \int e^{i (s-t) \cdot \xi} c_f(s,\xi) \dif \xi \big) \cdot \big( \int e^{-ik_1 (\hat x_1 \cdot y - |y-s|)} \frac {V(y)} {|y-s|} \dif y \big) \nonumber\\
		& \quad \cdot \big( \int e^{ik_2 (\hat x_2 \cdot z - |z-t|)} \frac {\overline V(z)} {|z-t|} \dif z \big) \dif s \dif t \nonumber\\
		= & \iint \eta_2(s+t) \big( \int e^{i (s-t) \cdot \xi} \tilde c(s,t,\xi) \dif \xi \big) e^{-ik_1 \hat x_1 \cdot s} e^{ik_2 \hat x_2 \cdot t} {\mathcal G} (s,k_1,\hat x_1) \overline {\mathcal G} (t,k_2,\hat x_2) \dif s \dif t \nonumber\\
		= & \frac 1 2 \iint \eta_2(T) e^{i\theta_2 \cdot T} e^{-i\theta_1 \cdot S} \big( \int e^{i S \cdot \xi} {\mathcal G}(\frac {T+S} 2,k_1,\hat x_1) \overline {\mathcal G}(\frac {T-S} 2,k_2,\hat x_2) c_2(T,\xi) \dif \xi \big) \dif S \dif T \nonumber\\
		= & \frac 1 2 \iint \eta_2(T) e^{i\theta_2 \cdot T} e^{-i\theta_1 \cdot S} \big( \int e^{i S \cdot \xi} \tilde c_3(S,T,\xi) \dif \xi \big) \dif S \dif T \nonumber\\
		= & \frac 1 2 \int \eta_2(T) e^{i \theta_2 \cdot T} \Big( \int e^{-i \theta_1 \cdot S} \big( \int e^{i S \cdot \xi} c_3(T,\xi) \dif \xi \big) \dif S \Big) \dif T \nonumber\\
		\simeq & \int_{\R^3} \eta_2(T) e^{i\theta_2 \cdot T} c_3(T,\theta_1) \dif T, \label{eq:hotG1G1Inter1-MLSchroEqu2018}
		\end{align}}
	where
	\[
	{\mathcal G}(s,k,\hat x) := \int_{\R^3} e^{-ik (\hat x \cdot y - |y|)} \frac {V(y+s)} {|y|} \dif y ,
	\]
	and
	\[
	\begin{cases}
	\theta_1 := (k_1 \hat x_1 + k_2 \hat x_2)/2 \\
	\theta_2 := (k_1 \hat x_1 - k_2 \hat x_2)/2
	\end{cases}
	\quad \text{and} \quad
	\begin{cases}
	S := s-t \\
	T := s+t
	\end{cases},
	\]
	and
	\begin{equation*}
	\begin{cases}
	& \tilde c(s,t,\xi) := \eta_1(s) \eta_1(t) c_f(s,\xi), \\
	& c_2(T,\xi) = \tilde c(T/2,T/2,\xi) + S^{-m-1} = (\eta_1(T/2))^2 c(T/2,\xi) + S^{-m-1}, \\
	& \tilde c_3(S,T,\xi) = \tilde c_3(S,T,\xi;k_1,\hat x_1,k_2,\hat x_2) := {\mathcal G}(\frac {T+S} 2,k_1,\hat x_1) \,\overline {\mathcal G}(\frac {T-S} 2,k_2,\hat x_2) \,c_2(T,\xi), \\
	& c_3(T,\xi) = \tilde c_3(0,T,\xi) + S^{-m-1}.
	\end{cases}
	\end{equation*}
	Here the notation $S^{-m-1}$ stands for the set of symbols of pseudo-differential operators of order $-m-1$; see e.g. \cite{wong2014pdo} for more details about pseudo-differential operators.
	Therefore,
	\begin{align}
	c_3(T,\xi)
	& = {\mathcal G}(T/2,k_1,\hat x_1)\, \overline {\mathcal G}(T/2,k_2,\hat x_2)\, c_2(T,\xi) + S^{-m-1} \nonumber\\
	& = (\eta_1(T/2))^2\, {\mathcal G}(T/2,k_1,\hat x_1) \,\overline {\mathcal G}(T/2,k_2,\hat x_2) \,c(T/2,\xi) \nonumber\\
	& \quad + {\mathcal G}(T/2,k_1,\hat x_1) \,\overline {\mathcal G}(T/2,k_2,\hat x_2) \cdot S^{-m-1} \nonumber\\
	& = (\eta_1(T/2))^2\, {\mathcal G}(T/2,k_1,\hat x_1) \,\overline {\mathcal G}(T/2,k_2,\hat x_2) \,c(T/2,\xi) + S^{-m-1}. \label{eq:c3-MLSchroEqu2018}
	\end{align}
	Set $\hat x = \hat x_1 = \hat x_2$ and recall that $|\mathcal S|$ signifies the Lebesgue measure of any Lebesgue-measurable set $\mathcal S$, from \eqref{eq:hotG1G1Inter1-MLSchroEqu2018} and \eqref{eq:c3-MLSchroEqu2018} we obtain
	\begin{align}
	& \ |\mathbb{E} \big( G_1(k_1,\hat x) \cdot \overline{G_1(k_2,\hat x)} \big)| \leq C |\supp \eta_2| \cdot \sup_{T \in \supp \eta_3} |c_3(T,\theta_1)| \nonumber\\
	\leq & \ C |\supp \eta_2| \agl[\theta_1]^{-m} \big( \sup_{T \in \supp \eta_2} |{\mathcal G}(T/2,k_1,\hat x)| \cdot |\overline {\mathcal G}(T/2,k_2,\hat x)| + C |\supp \eta_2| \cdot \agl[\theta_1]^{-1} \big) \nonumber\\
	\leq & \ C_f \sup_{T \in \supp \eta_2} |{\mathcal G}(T/2,k_1,\hat x)| \cdot |\overline {\mathcal G}(T/2,k_2,\hat x)| \cdot k^{-m} + C_f k^{-m-1}, \label{eq:hotG1G1Inter2-MLSchroEqu2018}
	\end{align}
	where the constant $C_f$ is independent of $k$, $k_1$, $k_2$ and $\hat x$.
	
	We proceed to show that ${\mathcal G}(T/2,k,\hat x) = \mathcal{O}(k^{-1})$. For any $\hat x \in \mathbb{S}^2$, we can always find two unit vectors $\hat x^{\perp,1}, \hat x^{\perp,2} \in \mathbb{S}^2$ such that the set $\{\hat x, \hat x^{\perp,1}, \hat x^{\perp,2} \}$ forms an orthonormal basis. 
	Write the $3 \times 3$ matrix $\Phi = (\hat x,\hat x^{\perp,1}, \hat x^{\perp,2})$, then $\Phi^T \hat x = (1,0,0)^T =: e_1$. 
	Denoting
	\[
	\widetilde V(y,s) := \agl[y]^{1+\sigma} V(y+s),
	\]
	where the value of $\sigma$ shall be determined later,
	we know $\widetilde V(y,s) \in C^3$ in $y$ variable.
	We have
	\begin{align*}
	{\mathcal G}(s,k,\hat x)
	& = \int_{\R^3} e^{-ik (\hat x \cdot y - |y|)} |y|^{-1} \agl[y]^{-1-\sigma} \widetilde V(y,s) \dif y \\
	& = \mathcal{O}(k^{-1}) + \int_{k^{-1/2}}^{+\infty} r \agl[r]^{-1-\sigma} e^{ikr} \dif r \cdot \int_{\mathbb{S}^2} e^{ikr \hat x \cdot w} \widetilde V(rw, s) \dif{S(w)} \\
	& = \mathcal{O}(k^{-1}) + \int_{k^{-1/2}}^{+\infty} r \agl[r]^{-1-\sigma} e^{ikr} \dif r \cdot \int_{\mathbb{S}^2} e^{ikr e_1 \cdot w} \widetilde V(r \Phi w, s) \dif{S(w)}, \quad k \to +\infty.
	\end{align*}
	We cover the  unit sphere $\mathbb{S}^2$ by six (relative) open parts:
	$$\Gamma_{p,q} := \{ (w_1,w_2,w_3) \in \R^3 ; \sum_{j=1}^3 w_j^2 = 1, (-1)^q w_p > \sqrt{3}/6 \}, ~p=1,2,3, ~q=0,1.$$
	It is straightforward to verify that $\{\Gamma_{p,q}\}$ is an open covering of $\mathbb{S}^2$, i.e.~$\mathbb{S}^2 \subset \cup_{p,q} \Gamma_{p,q}$. 
	There exists a partition of unity $\{\rho_{p,q}\}$ subject to the open covering $\{\Gamma_{p,q}\}$, and we write
	$$g_{p,q}(r,k,\hat x,s) := \int_{\Gamma_{p,q}} e^{ikr e_1 \cdot w} \rho_{p,q}(w) \widetilde V(r \Phi w, s) \dif{S(w)}.$$
	Hence,
	\begin{equation} \label{eq:fsInter-MLSchroEqu2018}
	\mathcal G (s,k,\hat x) = \mathcal{O}(k^{-1}) + \sum_{p,q} \int_{k^{-1/2}}^{+\infty} r \agl[r]^{-1-\sigma} e^{ikr} g_{p,q}(r,k,\hat x,s) \dif r.
	\end{equation}
	We proceed to analyze $g_{1,0}$ and $g_{3,0}$. The analysis of $g_{1,1}$ is similar to that of $g_{1,0}$, and $g_{p,q} \,(p=2,3,\, q=0,1)$ is similar to $g_{3,0}$, so we skip the analyses of these terms.
	
	In what follows, we write $w = (w_1,w_2,w_3)^T \in \mathbb{S}^2$ as a vertical vector. 
	Noticing that in $\Gamma_{1,0}$ the $w_1$ is uniquely determined by the $w_2$ and $w_3$, so there exists a unique function $\phi \in C^\infty$ such that $w_1 = \phi(w_2,w_3)$, and with a slight abuse of notation, we may write $w = w(w_1,w_2) = (\phi(w_2,w_3),w_2,w_3)^T$. 
	Denote the projection of $\Gamma_{1,0}$ onto the $(w_2,w_3)$-coordinate as $\Pi_{1,0}$. We know $\Pi_{1,0} \subset (-1,1)^2$.  We have
	$$\phi(w_2,w_3) \in (\sqrt{30}/6,1], \Forall (w_2,w_3) \in \Pi_{1,0}.$$
	We can fix some $\rho_{1,0} \in \mathcal{C}_c^\infty((-1,1)^2)$ such that $\rho_{1,0} \equiv 1$ in $\Pi_{1,0}$. Then
	\begin{equation} \label{eq:g10kr-MLSchroEqu2018}
	\begin{split}
	g_{1,0}
	& = \int_{\R^2} e^{ikr \phi(w_2,w_3)} \rho_{1,0}(w_2,w_3) \widetilde V(r \Phi w, s)  \\
	& \ \ \ \cdot \sqrt{\det [ (\partial_{w_2} w, \partial_{w_3} w)^T (\partial_{w_2} w, \partial_{w_3} w)]} \dif{w_2} \dif{w_3}.
	\end{split}		
	\end{equation}
	According to $\phi^2 + w_2^2 + w_3^2 = 1$ we have
	\begin{equation*}
	\begin{cases}
	\phi_{w_2} = -w_2/\phi \\
	\phi_{w_3} = -w_3/\phi
	\end{cases}
	\text{and}\quad
	\begin{cases}
	\phi_{w_2 w_2} = -(1+\phi_{w_2}^2)/\phi \\
	\phi_{w_2 w_3} = -\phi_{w_2} \phi_{w_3}/\phi \\
	\phi_{w_3 w_3} = -(1+\phi_{w_3}^2)/\phi
	\end{cases}.
	\end{equation*}
	Note that $\phi > \sqrt{30}/6$. Hence, we have that $|\nabla \phi| = 0$ only when $w_2 = w_3 = 0$ and that $\det [\frac {\partial_2 \phi} {\partial w_2 \partial w_3}] = (1 + \phi_{w_2}^2 + \phi_{w_3}^2)/\phi^2 \neq 0$. 
	This means that $(0,0)$ is the only critical point of the phase function $kr\phi(w_2,w_3)$ in \eqref{eq:g10kr-MLSchroEqu2018}, when $w \in \Gamma_{1,0}$. 
	According to the stationary phase lemma \cite[Chapter 3]{zw2012semi},
	we have
	\begin{equation}
	g_{1,0}(r,k,\hat x,s)
	= \Big( \frac {2\pi} {kr} \Big) C_1 \left( C_2 + \mathcal{O}((kr)^{-1})\right) = \mathcal{O}((kr)^{-1}), \quad k \to +\infty. \label{eq:g10-MLSchroEqu2018}
	\end{equation}
	Note that in order to use the stationary phase lemma to obtain the high-order term with $-1$ order decay, the integrand should have $C^5$-smoothness, which is guaranteed by \eqref{eq:asp-MLSchroEqu2018}.
	
	Next we analyze $g_{3,0}$. We may write $w = w(w_1,w_2) = (w_1,w_2,\phi(w_1,w_2))^T$.
	It holds that
	\begin{align*}
	& g_{3,0}(r,k,\hat x,s) \\
	= & \int_{\R^2} e^{ikr w_1} \rho_{3,0}^2(w_1,w_2) \widetilde V(r \Phi w, s) \nonumber\\
	& \ \ \cdot \sqrt{\det [ (\partial_{w_1} w, \partial_{w_2} w)^T (\partial_{w_1} w, \partial_{w_2} w)]} \dif{w_1} \dif{w_2} \\
	= & \frac 1 {ikr} \int_{\R^2} \partial_{w_1} (e^{ikr w_1}) \rho_{3,0}(w) \widetilde V(r \Phi w, s) \mathcal C_1(w_1,w_2) \dif{w_1} \dif{w_2} \\
	= & \frac i {kr} \int_{\R^2} e^{ikr w_1} \partial_{w_1} \big( \mathcal C_2(w_1,w_2;|\hat x|,V) \big) \dif{w_1} \dif{w_2} ,
	\end{align*}
	where $\mathcal C_1$ and $\mathcal C_2 := \rho_{3,0}(w) \widetilde V(r \Phi w, s) \mathcal C_1(w_1,w_2)$ are two functions such that $\mathcal C_1 \in C^\infty$ and $\mathcal C_2 \in C_c^3((-1,1)^2)$ because $\widetilde V(\cdot,s) 
	\in C^3$, and $\rho_{3,0}$ is chosen in the same manner as $\rho_{1,0}$. 
	Therefore the partial derivative of the function $\mathcal C_2$ is bounded above, and hence
	\begin{equation}  \label{eq:g30-MLSchroEqu2018}
	|g_{3,0}(r,k,\hat x,s)| \leq C (kr)^{-1}.
	\end{equation}
	
	Combining \eqref{eq:fsInter-MLSchroEqu2018} with \eqref{eq:g10-MLSchroEqu2018} and \eqref{eq:g30-MLSchroEqu2018}, one can compute
	\begin{align}
	|\mathcal G(s,k,\hat x)|
	& \leq \mathcal{O}(k^{-1}) + \sum_{p,q} \int_{k^{-1/2}}^{+\infty} r^{-\sigma} [C_1 (kr)^{-1} + C_2 (kr)^{-2} + \mathcal{O}((kr)^{-3}) ] \dif r \nonumber \\
	& = \mathcal{O}(k^{-1+\sigma/2}), \quad k \to +\infty, \label{eq:fs-MLSchroEqu2018}
	\end{align}
	where the asymptotics is uniform in terms of $s$ and $\hat x$.
	Note that we used $r \agl[r]^{-1-\sigma} \leq r^{-\sigma}$.
	By \eqref{eq:hotG1G1Inter2-MLSchroEqu2018} and \eqref{eq:fs-MLSchroEqu2018}, we arrive at
	\begin{equation*}
	|\mathbb{E} \big( G_1(k_1,\hat x_1) \cdot \overline{G_1(k_2,\hat x_2)} \big)| \leq C k^{-m-2+\sigma} + C \cdot k^{-m-1} = \mathcal{O}(k^{-m-1}), ~k \to +\infty,
	\end{equation*}
	where the last inequality is by taking $\sigma = 1/3$.
	Because $m > 2$, we obtain
	\begin{equation} \label{eq:hotG1G1-MLSchroEqu2018}
	|\mathbb{E} \big( G_1(k_1,\hat x_1) \cdot \overline{G_1(k_2,\hat x_2)} \big)| \leq \mathcal{O}(k^{-3}), ~k \to +\infty.
	\end{equation}

	\smallskip
	
	To estimate $\mathbb{E} (G_j \overline{G_\ell})$ for $j + \ell \geq 3$, $j,\,\ell \geq 1$, we first estimate $I(z,y)$ which is defined in \eqref{eq:DefI-MLSchroEqu2018}. 
	Choose $\eta_1, \eta_2 \in \mathcal{C}_c^\infty(\R^3)$ as in \eqref{eq:etaj-MLSchroEqu2018}.
	It follows that
	\begin{align}
	& I(z,y) \nonumber\\
	= \ & \iint_{{D_f} \times {D_f}} K(s,t) \eta_1(s) \eta_1(t) \Phi(s-y) \overline \Phi(t-z) \dif{s} \dif{t} \nonumber\\
	\simeq \ & \iint_{{D_f} \times {D_f}} \calF^{-1} \big\{ c(s,\cdot) \big\} (s-t) \cdot \eta_1(s) \eta_1(t) \Phi(s-y) \overline \Phi(t-z) \dif{s} \dif{t} \nonumber\\
	\simeq \ & \iint_{{D_f} \times {D_f}} e^{ik_1|s-y| -ik_2|t-z|} \big( |s-y|^{-1} |t-z|^{-1} \int e^{i(s-t) \cdot \xi} c_1(s,t,\xi) \dif{\xi} \big)  \dif{s} \dif{t}, \label{eq:I.inter1-MLSchroEqu2018}
	\end{align}
	where $c_1(s,t,\xi) := c(s,\xi) \eta_1(s) \eta_1(t)$. 
	Define two differential operators 
	\[
	L_1 := \frac {(s-y) \cdot \nabla_s} {ik_1|s-y|}
	\quad \text{and} \quad
	L_2 := \frac {(t-z) \cdot \nabla_t} {-ik_2|t-z|}.
	\]
	It can be  verified that
	\[
	L_1 L_2 (e^{ik_1|s-y|-ik_2|t-z|}) = e^{ik_1|s-y|-ik_2|t-z|}.
	\]
	Hence, noting that the integrand is compactly supported in ${D_f} \times {D_f}$ and by using integration by part, we can continue \eqref{eq:I.inter1-MLSchroEqu2018} as
	\begin{align}
	& \ |I(z,y)| \nonumber\\
	\simeq & \ |\iint_{{D_f} \times {D_f}} L_1 L_2 (e^{ik_1|s-y| -ik_2|t-z|}) \big( |s-y|^{-1} |t-z|^{-1} \int e^{i(s-t) \cdot \xi} c_1(s,t,\xi) \dif{\xi} \big)  \dif{s} \dif{t}| \nonumber\\
	\simeq & \ k_1^{-1} k_2^{-1} |\iint_{{D_f} \times {D_f}} e^{ik_1|s-y| -ik_2|t-z|} \nonumber \\
	& \hspace*{1.5cm} \times \Big\{ \divr \big( \frac {s-y} {|s-y|} \big) |s-y|^{-1} \big[ \divr \big( \frac {t-z} {|t-z|} \big) |t-z|^{-1} \int e^{(s-t) \cdot \xi} c_1 \dif \xi \nonumber \\
	& \hspace*{6cm} + \frac {t-z} {|t-z|^2} \cdot \nabla_t \int e^{(s-t) \cdot \xi} c_1 \dif \xi \big] \nonumber \\
	& \hspace*{2.3cm} + \frac {s-y} {|s-y|^2} \cdot \big[ \divr \big( \frac {t-z} {|t-z|} \big) |t-z|^{-1} \nabla_s \int e^{(s-t) \cdot \xi} c_1 \dif \xi \nonumber \\
	& \hspace*{6cm} + \frac {t-z} {|t-z|^2} \cdot \nabla_t \nabla_s \int e^{(s-t) \cdot \xi} c_1 \dif \xi \big] \Big\} \dif s \dif t | \nonumber \\
	\lesssim & \ k_1^{-1} k_2^{-1} \iint_{{D_f} \times {D_f}} \big[ |s-y|^{-2} |t-z|^{-2} \mathcal J_0 + |s-y|^{-2} |t-z|^{-1} (\max_a \mathcal J_{1;a}) \nonumber \\
	& \hspace*{2cm} + |s-y|^{-1} |t-z|^{-2} (\max_{a}\mathcal J_{1;a}) + |s-y|^{-1} |t-z|^{-1} (\max_{a,b} \mathcal J_{2;a,b}) \big] \dif s \dif t, \label{eq:I.inter2-MLSchroEqu2018}
	\end{align}
	where $a,b$ are indices running from 1 to 3, and
	\begin{align*}
	\mathcal J_0
	& := |\int e^{i(s-t) \cdot \xi}\, c_1(s,t,\xi) \dif \xi|, \quad
	\mathcal J_{1;a}
	:= |\int e^{i(s-t) \cdot \xi}\, \xi_a c_1(s,t,\xi) \dif \xi|, \\
	\mathcal J_{2;a,b}
	& := |\int e^{i(s-t) \cdot \xi}\, \xi_a \xi_b c_1(s,t,\xi) \dif \xi|.\end{align*}

	Because of the condition $m > 2$, we can find a  number $\tau \in (0,1)$ satisfying the inequalities $3 - m < \tau < 1$.
	Therefore, we have
	\begin{subequations} \label{eq:hotF1F1J3T-MLSchroEqu2018}
		\begin{numcases}{}
		- m - \tau < -3, \label{eq:hotF1F1J3s1-MLSchroEqu2018} \\
		- 2 - \tau > -3. \label{eq:hotF1F1J3s2-MLSchroEqu2018}
		\end{numcases}
	\end{subequations}
	By using \cite[Lemmas 3.1 and 3.2]{HLSM2019both}, these quantities $\mathcal J_0$, $\mathcal J_{1;a}$ and $\mathcal J_{2;a,b}$ can be estimated as follows:	
	\begin{align}
	\mathcal J_0
	& = |s-t|^{-\tau} \cdot |\int (-\Delta_\xi)^{\tau/2} (e^{i(s-t) \cdot \xi}) c_1 (s,t,\xi) \dif \xi| \nonumber \\
	& = |s-t|^{-\tau} \cdot |\int e^{i(s-t) \cdot \xi}\, (-\Delta_\xi)^{\tau/2} (c_1 (s,t,\xi)) \dif \xi| \nonumber \\
	& \lesssim |s-t|^{-\tau} \cdot \int \agl[\xi]^{-m-\tau} \dif \xi  \lesssim |s-t|^{-\tau}. \label{eq:hotF1F1J1-MLSchroEqu2018}
	\end{align}
	The last inequality in \eqref{eq:hotF1F1J1-MLSchroEqu2018} makes use of the fact \eqref{eq:hotF1F1J3s1-MLSchroEqu2018}.
	We estimate $\mathcal J_{1;a}$ as follows,
	\begin{align}
	\mathcal J_{1;a}
	& = |\int \frac {(s - t) \cdot \nabla_{\xi}} {i|s-t|^{2+\tau}} ((-\Delta_\xi)^{\tau/2} e^{i(s-t) \cdot \xi})\, \xi_a c_1(s,t,\xi) \dif \xi| \nonumber\\
	& = \frac {|s - t|} {|s-t|^{2+\tau}} |\int e^{i(s-t) \cdot \xi}\, (-\Delta_\xi)^{\tau/2} \big(\nabla_\xi ( \xi_a c_1(s,t,\xi) ) \big) \dif \xi| \nonumber\\
	& \leq C |s-t|^{-1-\tau} \int \agl[\xi]^{-m+1-1-\tau} \dif \xi
	\leq C |s-t|^{-1-\tau}, \label{eq:hotF1F1J2-MLSchroEqu2018}
	\end{align}
	where the constant $C$ is independent of the index $a$.
	Similarly, we have
	\begin{align}
	\mathcal J_{2;a,b}
	& = |s - t|^{-2-\tau} |\int \Delta_\xi (-\Delta_\xi)^{\tau/2} (e^{i(s-t) \cdot \xi})\, \xi_a \xi_b c_1(s,t,\xi) \dif \xi| \nonumber \\
	& \leq C |s - t|^{-2-\tau} |\int  \agl[\xi]^{-m+2-2-\tau} \dif \xi|
	\leq C |s - t|^{-2-\tau}, \label{eq:hotF1F1J3-MLSchroEqu2018}
	\end{align}
	where the constant $C$ is independent of the indices $a$ and $b$.
	Combining \eqref{eq:I.inter2-MLSchroEqu2018}, \eqref{eq:hotF1F1J1-MLSchroEqu2018}, \eqref{eq:hotF1F1J2-MLSchroEqu2018} and \eqref{eq:hotF1F1J3-MLSchroEqu2018}, we can rewrite \eqref{eq:I.inter2-MLSchroEqu2018} as
	\begin{align}
	k_1 k_2 |I(z,y)|
	& \lesssim \iint_{{D_f} \times {D_f}} \big[ |s-y|^{-2} |t-z|^{-2} |s - t|^{-\tau} + |s-y|^{-2} |t-z|^{-1} |s - t|^{-1-\tau} \nonumber \\
	& \hspace*{1cm} + |s-y|^{-1} |t-z|^{-2} |s - t|^{-1-\tau} + |s-y|^{-1} |t-z|^{-1} |s - t|^{-2-\tau} \big] \dif s \dif t \nonumber \\
	& =: \mathbb I_1 + \mathbb I_2 + \mathbb I_3 + \mathbb I_4. \label{eq:bbI1234-MLSchroEqu2018}
	\end{align}
	{Denote $\mathbf D := \{x + x', x - x' \,;\, x, x' \in \widetilde{\mathcal D} \}$.
		Then we apply Lemma \ref{lem:intbdd-MLSchroEqu2018} to estimate $\mathbb I_j~(j = 1,2,3,4)$ as follows,}
	\begin{align}
	\mathbb I_1
	& = \iint_{{D_f} \times {D_f}} |s-y|^{-2} |t-z|^{-2} |s - t|^{-\tau} \dif s \dif t \nonumber \\
	& \leq \int_{\mathbf D} |s|^{-2} \big( \int_{\mathbf D} |t|^{-2} |t - (s+y-z)|^{-\tau} \dif t \big) \dif s \nonumber \\
	& \lesssim \int_{\mathbf D} |s|^{-2} [|s - (z-y)|^{3-2-\tau} + (\diam \mathbf D)^{3-2-\tau}] \dif s \nonumber \\
	& = C_{f} + \int_{\mathbf D} |s|^{-2} |s - (z-y)|^{-(\tau-1)} \dif s \nonumber \\
	& \lesssim C_{f} + |z-y|^{2-\tau} + (\diam \mathbf D)^{2-\tau} \nonumber \\
	& \simeq |z-y|^{2-\tau} + C_{f}.
	\label{eq:bbI1-MLSchroEqu2018}
	\end{align}
	Note that in \eqref{eq:bbI1-MLSchroEqu2018} we used Lemma \ref{lem:intbdd-MLSchroEqu2018} twice.
	Similarly,
	\begin{equation} \label{eq:bbI2-MLSchroEqu2018}
	\mathbb I_2,\, \mathbb I_3,\, \mathbb I_4
	\lesssim |z-y|^{2-\tau} + C_{f}.
	\end{equation}
	Recall that $\tau \in (0,1)$. By \eqref{eq:bbI1234-MLSchroEqu2018}, \eqref{eq:bbI1-MLSchroEqu2018} and \eqref{eq:bbI2-MLSchroEqu2018}, we arrive at
	\begin{align}
	|I(z,y)|
	& \leq C k_1^{-1} k_2^{-1} (|z-y|^{2-\tau} + C), \label{eq:I.inter3-MLSchroEqu2018}
	\end{align}
	where the constant $C$ is independent of $y$, $z$ and $k$.
	
	\smallskip

	Recall $V \in L_{3/2+\epsilon}^2(\R^3)$ stipulated in \eqref{eq:asp-MLSchroEqu2018}, so it follows $\nrm[L_{1/2+\epsilon}^2(\R^3)]{V} < +\infty$.
	This will be used in the next computation.
	Combining \eqref{eq:hotGjGl-MLSchroEqu2018} and \eqref{eq:I.inter3-MLSchroEqu2018} and (without loss of generality) assuming $\ell \geq 2$, one can compute
	\begin{align*}
		& \ |\mathbb{E} \big( G_j(k_1,\hat x_1) \cdot \overline{G_\ell(k_2,\hat x_2)} \,\big)| \\
		= & \ \big| \int e^{ik_2 \hat x_2 \cdot z} \bigg\{ (V \mathcal{R}_{k_2})^{\ell-1} \Big( \int e^{-ik_1 \hat x_1 \cdot y} \big[ (V \mathcal{R}_{k_1})^{j-1} (V(1) \overline V (2) I(2,1)) \big] (y) \dif{y} \Big) \bigg\}(z) \dif{z} \big| \\
		\leq & \ C_V \Nrm[L_{-1/2-\epsilon}^2(\R^3;2)]{\mathcal R_{k_2} (V \mathcal{R}_{k_2})^{\ell-2} \Big( \int e^{-ik_1 \hat x_1 \cdot y} \big[ (V \mathcal{R}_{k_1})^{j-1} (V(1) \overline V (2) I(2,1)) \big] (y) \dif{y} \Big)} \\
		\leq & \ C_{V} k_2^{-\ell+1} \Nrm[L_{1/2 + \epsilon}^2(\R^3;2)]{ \nrm[L_{-1/2-\epsilon}^2(\R^3;1)]{\mathcal R_{k_1} (V \mathcal{R}_{k_1})^{j-2} (V(1) \overline V (2) I(2,1))} } \\
		\lesssim & \ k_2^{-\ell+1} k_1^{-j+1} \Nrm[L_{1/2 + \epsilon}^2(\R^3;2)]{ \nrm[L_{1/2 + \epsilon}^2(\R^3;1)]{(V(1) \overline V (2) I(2,1))} } \\
	\end{align*}
	By substituting \eqref{eq:I.inter3-MLSchroEqu2018} into the computation above, we can continue
	\begin{align*}
		& \ |\mathbb{E} \big( G_j(k_1,\hat x_1) \cdot \overline{G_\ell(k_2,\hat x_2)} \,\big)| \\
		\lesssim & \ k_2^{-\ell+1} k_1^{-j+1} \Big( \iint \agl[y]^{1+2\epsilon} \agl[z]^{1+2\epsilon} |V(y) V(z) I(z,y)|^2 \dif y \dif z \Big)^{1/2} \\
		\leq & \ k_2^{-\ell} k_1^{-j} \Big( \iint \agl[y]^{1+2\epsilon} \agl[z]^{1+2\epsilon} |V(y) V(z)|^2 (|z-y|^{2-\tau} + C) \dif y \dif z \Big)^{1/2} \quad (\text{by~} \eqref{eq:I.inter3-MLSchroEqu2018}) \\
		\leq & \ k_2^{-\ell} k_1^{-j} \Big( \iint \agl[y]^{3+2\epsilon} \agl[z]^{3+2\epsilon} |V(y) V(z)|^2 \dif y \dif z + C \nrm[L_{1/2+\epsilon}^2(\R^3)]{V}^2 \Big)^{1/2} \\
		= & \ C k_2^{-\ell} k_1^{-j} \nrm[L_{3/2+\epsilon}^2(\R^3)]{V}
		< C k_2^{-\ell} k_1^{-j},
	\end{align*}
	where in the last inequality we used $\nrm[L_{3/2+\epsilon}^2(\R^3)]{V} < +\infty$ guaranteed by \eqref{eq:asp-MLSchroEqu2018}.
	We also used $|z-y| \leq \agl[z-y] \leq \agl[z] \agl[y]$.
	Therefore,
	\begin{align}
	& \Big|\sum_{j+\ell \geq 3,\,j,\ell \geq 1} \mathbb{E} \big( G_j(k_1,\hat x_1) \cdot \overline{G_\ell(k_2,\hat x_2)} \big) \Big| \nonumber\\
	\lesssim & \sum_{j = 1, \,\ell \geq 2} k_2^{-\ell} k_1^{-j} + \sum_{j \geq 2} \sum_{\ell \geq 1} k_2^{-\ell} k_1^{-j}
	\lesssim k_2^{-2} k_1^{-1} + \sum_{j \geq 2} \sum_{\ell \geq 1} k_2^{-2} k_1^{-j}
	\lesssim k^{-3}, \quad k \to +\infty. \label{eq:hotGjGlest-MLSchroEqu2018}
	\end{align}
	
	Finally, by combining \eqref{eq:F1F1G1GjGl-MLSchroEqu2018}, \eqref{eq:hotG1G1-MLSchroEqu2018} and \eqref{eq:hotGjGlest-MLSchroEqu2018}, we conclude \eqref{eq:hotF1F1-MLSchroEqu2018}, which completes the proof. 
\end{proof}

The following lemma is the ergodic version of Lemmas \ref{lemma:HOT0-MLSchroEqu2018} and \ref{lemma:HOT1-MLSchroEqu2018}.
\begin{lem} \label{lemma:HOTErgo-MLSchroEqu2018}
	Define $F_j(k,\hat x) ~(j=0,1)$ as in \eqref{eq:Fjkx-MLSchroEqu2018}. Write
	\begin{align*}
	X_{p,q}(K,\tau,\hat x) & = \frac 1 K \int_K^{2K} k^m \overline{F_q(k,\hat x)} \cdot F_p(k+\tau,\hat x) \dif{k}, \ \text{for} \ (p,q) \in \{ (0,1), (1,0), (1,1) \}.
	\end{align*}
	Then for any $\hat x \in \mathbb{S}^2$ and any $\tau \geq 0$, when $K \to +\infty$, we have the following estimates:
	{\small \begin{align} 
		\big| \mathbb{E} (X_{p,q}(K,\tau,\hat x)) \big| & = \mathcal{O}(K^{-1}), \quad \big| \mathbb{E} (|X_{p,q}(K,\tau,\hat x)|^2) \big| = \mathcal{O}(K^{-3/2}), \quad (p,q) \in \{ (0,1), (1,0) \}, \label{eq:hotF0F1Ergo-MLSchroEqu2018} \\
		\big| \mathbb{E} (X_{1,1}(K,\tau,\hat x)) \big| & = \mathcal{O}(K^{m-3}), \quad \big| \mathbb{E} (|X_{1,1}(K,\tau,\hat x)|^2) \big| = \mathcal{O}(K^{2(m-3)}). \label{eq:hotF1F1Ergo-MLSchroEqu2018}
		\end{align}}
	Let $\{K_j\} \in P \big( \max\{2/3,\, (3-m)^{-1}/2 \} + \gamma \big)$, then for any $\tau \geq 0$, we have
	\begin{equation} \label{eq:HOTErgoToZero-MLSchroEqu2018}
	\lim_{j \to +\infty} X_{p,q}(K_j,\tau,\hat x) = 0 \quad \ass,
	\end{equation}
	for every $(p,q) \in \{ (0,1), (1,0), (1,1) \}$.
\end{lem}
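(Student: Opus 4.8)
The plan is to establish the two moment estimates \eqref{eq:hotF0F1Ergo-MLSchroEqu2018}--\eqref{eq:hotF1F1Ergo-MLSchroEqu2018} first, and then to deduce the almost sure convergence \eqref{eq:HOTErgoToZero-MLSchroEqu2018} from them by the same Chebyshev--Borel--Cantelli scheme already employed in the proof of Lemma~\ref{lemma:LeadingTermErgo-MLSchroEqu2018}. The expectation bounds are immediate upon substituting the pointwise covariance estimates of Lemmas~\ref{lemma:HOT0-MLSchroEqu2018} and \ref{lemma:HOT1-MLSchroEqu2018} into the definition of $X_{p,q}$ and integrating: for $(p,q)\in\{(0,1),(1,0)\}$ one gets $|\mathbb{E}(X_{p,q})| \leq \frac 1 K \int_K^{2K} k^m\,\mathcal{O}(k^{-m-1})\dif{k} = \mathcal{O}(K^{-1})$, while for $(p,q)=(1,1)$ one gets $|\mathbb{E}(X_{1,1})| \leq \frac 1 K \int_K^{2K} k^m\,\mathcal{O}(k^{-3})\dif{k} = \mathcal{O}(K^{m-3})$, where the hypothesis $m>2$ guarantees that the last integral is controlled by its upper endpoint.

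For the second-moment bounds I would expand $\mathbb{E}(|X_{p,q}|^2)$ as the double integral
\begin{equation*}
\frac 1 {K^2}\int_K^{2K}\int_K^{2K} k_1^m k_2^m\, \mathbb{E}\big(\overline{F_q(k_1,\hat x)} F_p(k_1+\tau,\hat x) F_q(k_2,\hat x)\overline{F_p(k_2+\tau,\hat x)}\big)\dif{k_1}\dif{k_2},
\end{equation*}
and, since the $F_j$ are zero-mean (complex) Gaussian functionals of $f-\mathbb{E}f$, apply Isserlis' theorem to split the fourth moment into the three pairwise products of covariances. Each covariance is then estimated by the appropriate building block: the mixed $F_0F_1$ covariances by Lemma~\ref{lemma:HOT0-MLSchroEqu2018}, the $F_1F_1$ covariances by Lemma~\ref{lemma:HOT1-MLSchroEqu2018}, and the pure $F_0F_0$ covariances by the leading-order formula \eqref{eq:I0-MLSchroEqu2018}, which carries the factor $\widehat{\mu}((k_1-k_2)\hat x)$. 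The decisive feature is that this factor is concentrated on the diagonal $k_1=k_2$, so that integrating it over $[K,2K]^2$ costs only one power of $K$ rather than two, exactly as quantified in Lemma~\ref{lemma:LeadingTermTechnical-MLSchroEqu2018}. Combining the polynomial weight $k_1^m k_2^m$, the covariance decay rates and this diagonal gain is what produces $\mathbb{E}(|X_{p,q}|^2)=\mathcal{O}(K^{-3/2})$ for $(p,q)\in\{(0,1),(1,0)\}$ and $\mathcal{O}(K^{2(m-3)})$ for $(p,q)=(1,1)$.

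With the moment bounds in hand, the almost sure statement follows verbatim as in Lemma~\ref{lemma:LeadingTermErgo-MLSchroEqu2018}. For $\{K_j\}\in P(t)$ with $t=\max\{2/3,(3-m)^{-1}/2\}+\gamma$, Chebyshev's inequality gives $\mathbb{P}(|X_{p,q}(K_j)|\geq\epsilon)\leq \epsilon^{-2}\mathbb{E}(|X_{p,q}(K_j)|^2)$, and the specific value of $t$ is calibrated precisely so that the resulting series is summable: for $(0,1),(1,0)$ one has $\sum_j K_j^{-3/2}\lesssim \sum_j j^{-3t/2}<\infty$ because $t>2/3$, and for $(1,1)$ one has $\sum_j K_j^{2(m-3)}\lesssim\sum_j j^{2t(m-3)}<\infty$ because $t>(3-m)^{-1}/2$ together with $m<3$. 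An application of \cite[Lemma 3.2]{HLSM2019determining} (equivalently the Borel--Cantelli lemma) then yields $X_{p,q}(K_j,\tau,\hat x)\to 0$ almost surely for each of the three index pairs, which is \eqref{eq:HOTErgoToZero-MLSchroEqu2018}.

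The main obstacle is the second-moment computation, and within it the cross (off-diagonal) Isserlis pairings, where a genuine $F_0F_0$ covariance is multiplied by an $F_1F_1$ or $F_0F_1$ covariance evaluated at two distinct wavenumbers $k_1\neq k_2$. Here one cannot merely multiply worst-case pointwise bounds, since the weight $k_1^m k_2^m$ grows like $K^{2m}$ and must be defeated by the covariance decay \emph{together with} the diagonal concentration of $\widehat{\mu}$; the two-sided constraint $2<m<3$ enters exactly in keeping this balance closed (the upper end $m\to 3$ is the worst case, as it is there that the $F_1F_1$-type contributions are least integrable). Tracking all these factors carefully to land on the sharp exponents $-3/2$ and $2(m-3)$ is the delicate part of the argument. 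Throughout, the uniformity in $\hat x$ requires that the constants furnished by Lemmas~\ref{lemma:HOT0-MLSchroEqu2018}--\ref{lemma:HOT1-MLSchroEqu2018} and by the Schwartz decay of $\widehat{\mu}$ be independent of $\hat x$, which is the case.
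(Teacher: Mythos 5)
Your proposal is correct and takes essentially the same route as the paper's own proof: the first-moment bounds by integrating the pointwise covariance estimates of Lemmas~\ref{lemma:HOT0-MLSchroEqu2018} and \ref{lemma:HOT1-MLSchroEqu2018} against the weight $k^m$, the second-moment bounds by Isserlis' theorem with the cross pairings controlled through the diagonal concentration of $\widehat{\mu}$ (Lemma~\ref{lemma:LeadingTermTechnical-MLSchroEqu2018} via H\"older), and the almost sure limit \eqref{eq:HOTErgoToZero-MLSchroEqu2018} by Chebyshev plus Borel--Cantelli, with the exponent in $P\big(\max\{2/3,(3-m)^{-1}/2\}+\gamma\big)$ calibrated exactly as you describe. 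The paper's proof is this same argument written out in detail, including the identical treatment of the $(0,1)$, $(1,0)$ and $(1,1)$ cases.
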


We may denote $X_{p,q}(K,\tau,\hat x)$ as $X_{p,q}$ for short if it is clear in the context.

\begin{proof}[Proof of Lemma \ref{lemma:HOTErgo-MLSchroEqu2018}] According to Lemmas \ref{lemma:HOT0-MLSchroEqu2018} and \ref{lemma:HOT1-MLSchroEqu2018}, we have
	\begin{align} 
	\mathbb{E} (X_{0,1}) & = \frac 1 K \int_K^{2K} k^m \mathbb{E} \big( \overline{F_1(k,\hat x)} \cdot F_0(k+\tau,\hat x) \big) \dif{k} = \frac 1 K \int_K^{2K} \mathcal{O}(k^{-1}) \dif{k} \nonumber\\
	& = \mathcal{O}(K^{-1}), \quad K \to +\infty. \label{eq:hotF0F1Ergo1-MLSchroEqu2018}
	\end{align}
	By formula \eqref{eq:I0EstE-MLSchroEqu2018}, Isserlis' Theorem and Lemma \ref{lemma:LeadingTermTechnical-MLSchroEqu2018}, we compute the secondary moment of $X_{0,1}$,
	{\small \begin{align}
		& \mathbb{E} \big( | X_{0,1} |^2 \big) \nonumber \\
		= & \ \mathbb{E} \Big( \frac 1 K \int_K^{2K} k_1^m F_0(k_1+\tau,\hat x) \cdot \overline{F_1(k_1,\hat x)} \dif{k_1} \cdot \frac 1 K \int_K^{2K} k_2^m \overline{ F_0(k_2+\tau,\hat x) } \cdot F_1(k_2,\hat x) \dif{k_2} \Big) \nonumber \\
		= & \ \frac 1 {K^2} \int_K^{2K} \int_K^{2K} [\mathcal{O}(K^{-2}) + (2\pi)^{3/2} \widehat{\mu} ((k_1-k_2) \hat x) \cdot \mathcal{O}(K^{-1}) + \mathcal{O}(K^{-2})] \dif{k_1} \dif{k_2} \nonumber\\
		= & \ \frac 1 {K^2} \int_K^{2K} \int_K^{2K} (2\pi)^{3/2} \widehat{\mu} ((k_1-k_2) \hat x) \dif{k_1} \dif{k_2} \cdot \mathcal{O}(K^{-1}) + \mathcal{O}(K^{-2}) \nonumber\\
		= & \ \mathcal{O}(K^{-1/2}) \cdot \mathcal{O}(K^{-1}) + \mathcal{O}(K^{-2}) \quad(\text{H\"older ineq. and } \eqref{eq:F0F0TermOne-MLSchroEqu2018}) \nonumber\\
		= & \ \mathcal{O}(K^{-3/2}), \quad K \to +\infty. \label{eq:hotF0F1Ergo2-MLSchroEqu2018}
		\end{align}}
	From \eqref{eq:hotF0F1Ergo1-MLSchroEqu2018} and \eqref{eq:hotF0F1Ergo2-MLSchroEqu2018} we obtain \eqref{eq:hotF0F1Ergo-MLSchroEqu2018} for $(p,q) = (0,1)$. Similarly, formula \eqref{eq:hotF0F1Ergo-MLSchroEqu2018} for $(p,q) = (1,0)$ can be proved and we skip the details.
	
	By Chebyshev's inequality and \eqref{eq:hotF0F1Ergo2-MLSchroEqu2018}, for any $\epsilon > 0$, we have
	\begin{align}
	& P \big( \bigcup_{j \geq K_0} \{ |X_{0,1}(K_j, \tau, \hat x) - 0| \geq \epsilon \} \big) \leq \frac C {\epsilon^2} \sum_{j \geq K_0} K_j^{-3/2} \leq \frac C {\epsilon^2} \sum_{j \geq K_0} j^{-1-3\gamma/2} \nonumber\\
	\leq & \frac C {\epsilon^2} \int_{K_0}^{+\infty} (t-1)^{-1-3\gamma/2} \dif{t} \to 0, \quad K_0 \to +\infty. \label{eq:X01Ergo-MLSchroEqu2018}
	\end{align}
	According to \cite[Lemma 3.3]{HLSM2019determining}, \eqref{eq:X01Ergo-MLSchroEqu2018} implies \eqref{eq:HOTErgoToZero-MLSchroEqu2018} for $(p,q) = (0,1)$. Similarly, formula \eqref{eq:HOTErgoToZero-MLSchroEqu2018} for $(p,q) = (1,0)$ can be proved.
	
	Now we prove \eqref{eq:hotF1F1Ergo-MLSchroEqu2018}. We have:
	{\small \begin{equation} \label{eq:hotF1F1Ergo1-MLSchroEqu2018}
		\mathbb{E} \big( X_{1,1} \big) = \frac 1 K \int_K^{2K} k^m \mathbb{E} \big( \overline{F_1(k,\hat x)} \cdot F_1(k+\tau,\hat x) \big) \dif{k}	= \frac 1 K \int_K^{2K} \mathcal{O}(K^{m-3}) \dif{k} = \mathcal{O}(K^{m-3}).
		\end{equation}}
	Compute the secondary moment:
	{\small \begin{align}
		& \mathbb{E} \big( | X_{1,1} |^2 \big) = \mathbb{E} \big( \frac 1 K \int_K^{2K} k_1^m F_1(k_1+\tau,\hat x) \cdot \overline{F_1(k_1,\hat x)} \dif{k_1} \cdot \frac 1 K \int_K^{2K} k_2^m \overline{ F_1(k_2+\tau,\hat x) } \cdot F_1(k_2,\hat x) \dif{k_2} \big) \nonumber\\
		= & \frac 1 {K^2} \int_K^{2K} \int_K^{2K} \mathcal{O}(K^{m-3}) \cdot \mathcal{O}(K^{m-3}) \dif{k_1} \dif{k_2} \quad (\text{Lemmas } \ref{lemma:HOT0-MLSchroEqu2018},\, \ref{lemma:HOT1-MLSchroEqu2018}) \nonumber\\
		= & \mathcal{O}(K^{2(m-3)}), \quad K \to +\infty. \label{eq:hotF1F1Ergo2-MLSchroEqu2018}
		\end{align}}
	Formulae \eqref{eq:hotF1F1Ergo1-MLSchroEqu2018} and \eqref{eq:hotF1F1Ergo2-MLSchroEqu2018} gives \eqref{eq:hotF1F1Ergo-MLSchroEqu2018}.
	
	By Chebyshev's inequality and \eqref{eq:hotF1F1Ergo2-MLSchroEqu2018}, for any $\epsilon > 0$, we have
	\begin{align}
	& P \big( \bigcup_{j \geq K_0} \{ |X_{1,1} - 0| \geq \epsilon \} \big) \leq \frac C {\epsilon^2} \sum_{j \geq K_0} K_j^{2(m-3)} \leq \frac C {\epsilon^2} \sum_{j \geq K_0} j^{-1-\gamma'} \nonumber\\
	\leq & \frac C {\epsilon^2} \int_{K_0}^{+\infty} (t-1)^{-1-\gamma'} \dif{t} \to 0, \quad K_0 \to +\infty., \label{eq:X11Ergo-MLSchroEqu2018}
	\end{align}
	where $\gamma'$ is some positive constant depending on $m$.
	According to \cite[Lemma 3.3]{HLSM2019determining}, \eqref{eq:X11Ergo-MLSchroEqu2018} implies \eqref{eq:HOTErgoToZero-MLSchroEqu2018} for $(p,q) = (1,1)$.
	The proof is complete.
\end{proof}

\section{The recovery of the rough strength} \label{sec:VarRec-MLSchroEqu2018}

In this section we focus on the recovery of the rough strength $\mu(x)$ of the random source. We employ only a single-realisation of the passive scattering measurement, namely the random sample $\omega$ is fixed. 
The data set $\{ u^\infty(\hat x, k, \omega) \,\big|\, \hat x \in \mathbb{S}^2, k \in \R_+ \}$ is utilized to achieve the unique recovery result. 
In what follows, we present the main results of recovering $\mu(x)$ in Section \ref{subsec:MainSteps-MLSchroEqu2018}, and put the corresponding proofs in Section \ref{subsec:ProofsToMainSteps-MLSchroEqu2018}. The auxiliary lemmas derived in Section \ref{subsec:AsympHighOrder-MLSchroEqu2018} shall play a key role to the proofs in Section \ref{subsec:ProofsToMainSteps-MLSchroEqu2018}.

\subsection{Main unique recovery results} \label{subsec:MainSteps-MLSchroEqu2018}

The first main recovery result is given as follows.

\begin{thm} \label{lem:sigmaHatRec-MLSchroEqu2018}
	We have the following asymptotic identity,
	\begin{equation} \label{eq:sigmaHatRec-MLSchroEqu2018}
	4\sqrt{2\pi} \lim_{k \to +\infty} \mathbb{E} \big( k^m \big[\, \overline{ u^\infty(\hat x,k) } -  \overline{ \mathbb{E} u^\infty(\hat x,k) }\, \big] \cdot \big[ u^\infty(\hat x,k+\tau) - \mathbb{E} u^\infty(\hat x,k+\tau) \big] \big) = \widehat{\mu}(\tau \hat x),
	\end{equation}
	where $\tau \geq 0,~ \hat x \in \mathbb{S}^2$, and $\widehat u$ is the Fourier transform of $\mu$.
\end{thm}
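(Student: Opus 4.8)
The plan is to substitute the decomposition \eqref{eq:u1InftyDefn-MLSchroEqu2018} into the left-hand side of \eqref{eq:sigmaHatRec-MLSchroEqu2018} and then read off each contribution from the asymptotic estimates already assembled in Section \ref{sec:AsyEst-MLSchroEqu2018}. Writing $u^\infty - \mathbb E u^\infty = \tfrac{-1}{4\pi}(F_0 + F_1)$ with $F_0, F_1$ as in \eqref{eq:Fjkx-MLSchroEqu2018}, one has
\[
\overline{[u^\infty(\hat x,k) - \mathbb E u^\infty(\hat x,k)]} \cdot [u^\infty(\hat x,k+\tau) - \mathbb E u^\infty(\hat x,k+\tau)] = \frac{1}{16\pi^2} \sum_{p,q \in \{0,1\}} \overline{F_q(k,\hat x)}\, F_p(k+\tau,\hat x).
\]
After taking the expectation and multiplying by $k^m$, the theorem reduces to identifying $\lim_{k\to+\infty} k^m\, \mathbb E\big(\overline{F_q(k)}\,F_p(k+\tau)\big)$ for each of the four index pairs, and then checking that the single surviving term produces the claimed constant.

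For the leading piece $(p,q)=(0,0)$ I would invoke the identity \eqref{eq:I0-MLSchroEqu2018}, whose remainder symbol $a(y,k\hat x)$ is compactly supported in $y$ and satisfies $|a(y,k\hat x)| \lesssim k^{-m-1}$. This yields
\[
k^m\, \mathbb E \big( \overline{F_0(k,\hat x)}\, F_0(k+\tau,\hat x) \big) = (2\pi)^{3/2}\, \widehat\mu(\tau\hat x) + \mathcal O(k^{-1}), \quad k\to+\infty,
\]
so this is the only piece that contributes in the limit. For the two crossover pieces $(p,q)\in\{(0,1),(1,0)\}$ I would apply Lemma \ref{lemma:HOT0-MLSchroEqu2018}, taking $(k_1,k_2)$ to be $k$ and $k+\tau$ in either order and using $\overline{\mathbb E(\overline{F_0}\,F_1)} = \mathbb E(F_0\,\overline{F_1})$ to cover both orderings; this gives $\mathbb E\big(\overline{F_q(k)}\,F_p(k+\tau)\big) = \mathcal O(k^{-m-1})$, hence $k^m$ times these terms is $\mathcal O(k^{-1}) \to 0$.

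The remaining piece $(p,q)=(1,1)$ is handled by Lemma \ref{lemma:HOT1-MLSchroEqu2018}, which gives $\mathbb E\big(\overline{F_1(k)}\,F_1(k+\tau)\big) = \mathcal O(k^{-3})$, so that $k^m$ times this term is $\mathcal O(k^{m-3})$. This is the only place where the upper bound $m<3$ enters decisively: it forces $k^{m-3}\to 0$, making the higher-order self-interaction of the potential asymptotically negligible. Collecting the four contributions yields
\[
\lim_{k\to+\infty} k^m\, \mathbb E\big( \overline{[u^\infty - \mathbb E u^\infty]}\,[u^\infty - \mathbb E u^\infty]\big) = \frac{(2\pi)^{3/2}}{16\pi^2}\,\widehat\mu(\tau\hat x),
\]
and since the normalising constant satisfies $4\sqrt{2\pi}\cdot \frac{(2\pi)^{3/2}}{16\pi^2} = \frac{4\cdot 2\pi}{8\pi} = 1$, this is precisely \eqref{eq:sigmaHatRec-MLSchroEqu2018}.

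Since the heavy analytic machinery---the stationary-phase and pseudodifferential estimates---is already encapsulated in Lemmas \ref{lemma:HOT0-MLSchroEqu2018} and \ref{lemma:HOT1-MLSchroEqu2018} and in the identity \eqref{eq:I0-MLSchroEqu2018}, the main obstacle here is not a fresh estimate but careful bookkeeping: confirming that each error term genuinely vanishes after multiplication by $k^m$, and tracking the normalisation so the prefactor collapses to $1$. The delicate point to make explicit is the interplay of the assumption $2<m<3$: the lower bound $m>2$ is what guarantees the decay of the remainders in \eqref{eq:I0-MLSchroEqu2018} and in the crossover estimates, whereas the upper bound $m<3$ is exactly what defeats the $\mathcal O(k^{m-3})$ contribution from the $\overline{F_1}F_1$ term. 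I would therefore display the four estimates side by side and flag the role of both bounds before passing to the limit.
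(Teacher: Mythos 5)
Your proposal is correct and follows essentially the same route as the paper's own proof: decompose $u^\infty-\mathbb{E}u^\infty=\tfrac{-1}{4\pi}(F_0+F_1)$, kill the crossover terms via Lemma \ref{lemma:HOT0-MLSchroEqu2018} and the $\overline{F_1}F_1$ term via Lemma \ref{lemma:HOT1-MLSchroEqu2018} (using $m<3$), extract the leading term from \eqref{eq:I0-MLSchroEqu2018} with its $\mathcal{O}(k^{-m-1})$ symbol remainder, and verify $4\sqrt{2\pi}\cdot(2\pi)^{3/2}/(16\pi^2)=1$. The only (immaterial) difference is bookkeeping: you keep the crossover errors as $\mathcal{O}(k^{-1})$ after multiplying by $k^m$, while the paper lumps all three non-leading terms into a single $\mathcal{O}(k^{m-3})$ before concluding.
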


Theorem \ref{lem:sigmaHatRec-MLSchroEqu2018} clearly yields a recovery formula for the rough strength $\mu$. However, it requires many realizations and is lack of practical usefulness. The result in Theorem \ref{lem:sigmaHatRec-MLSchroEqu2018} can be improved by using the ergodicity as follows.


\begin{thm} \label{lem:sigmaHatRecErgo-MLSchroEqu2018}
	Let $m^* = \max \{2/3,\, (3-m)^{-1}/2 \}$. 
	Assume that $\{K_j\} \in P(m^* + \gamma)$. 
	Then  $\exists\, \Omega_0 \subset \Omega \colon \mathbb{P}(\Omega_0) = 0$, $\Omega_0$ depending only on $\{K_j\}_{j \in \mathbb{N}}$, such that for any $\omega \in \Omega \backslash \Omega_0$, there exists $S_\omega \subset \R^3 \colon |S_\omega| = 0$, it holds that for $\forall \tau \in \R_+$ and $\forall \hat x \in \mathbb{S}^2$ satisfying $\tau \hat x \in \R^3 \backslash S_\omega$,
	{\small \begin{align} 
		& 4\sqrt{2\pi} \lim_{j \to +\infty} \frac 1 {K_j} \int_{K_j}^{2K_j} k^m 
		\big[\, \overline{u^\infty(\hat x,k,\omega)} - \overline{\mathbb{E} u^\infty(\hat x,k)} \,\big]
		\cdot
		\big[ u^\infty(\hat x,k+\tau,\omega) - \mathbb{E} u^\infty(\hat x,k+\tau) \big] 
		\dif{k} \nonumber\\
		& = \widehat{\mu} (\tau \hat x). \label{eq:SecondOrderErgo-MLSchroEqu2018}
		\end{align}}
\end{thm}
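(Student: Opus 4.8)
The plan is to reduce \eqref{eq:SecondOrderErgo-MLSchroEqu2018} to the ergodic Lemmas \ref{lemma:LeadingTermErgo-MLSchroEqu2018} and \ref{lemma:HOTErgo-MLSchroEqu2018} for each fixed pair $(\tau,\hat x)$, and then to upgrade the resulting pointwise almost-sure convergence to the form required in the theorem on a single null event by a Fubini--Tonelli argument. Substituting the Neumann expansion \eqref{eq:u1InftyDefn-MLSchroEqu2018}, namely $u^\infty(\hat x,k,\omega)-\mathbb{E}(u^\infty(\hat x,k))=\frac{-1}{4\pi}[F_0(k,\hat x)+F_1(k,\hat x)]$, into the frequency average and multiplying the bracket at $k$ against the bracket at $k+\tau$, I obtain
\[
\frac1{K_j}\int_{K_j}^{2K_j}k^m\big[\,\overline{u^\infty}-\overline{\mathbb{E}u^\infty}\,\big]\big[u^\infty-\mathbb{E}u^\infty\big]\dif k=\frac1{(4\pi)^2}\big(X_{0,0}+X_{1,0}+X_{0,1}+X_{1,1}\big),
\]
with the $X_{p,q}(K_j,\tau,\hat x)$ exactly as defined in Lemmas \ref{lemma:LeadingTermErgo-MLSchroEqu2018} and \ref{lemma:HOTErgo-MLSchroEqu2018}. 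Fixing $\tau\ge0$ and $\hat x\in\mathbb{S}^2$, the hypothesis $\{K_j\}\in P(m^*+\gamma)$ is precisely what makes both lemmas applicable along $\{K_j\}$, so that $X_{0,0}\to(2\pi)^{3/2}\widehat\mu(\tau\hat x)$ and $X_{1,0},X_{0,1},X_{1,1}\to0$ almost surely. Hence the frequency average converges almost surely to $\frac1{(4\pi)^2}(2\pi)^{3/2}\widehat\mu(\tau\hat x)$, and since $4\sqrt{2\pi}\,(2\pi)^{3/2}=4(2\pi)^2=(4\pi)^2$, multiplication by $4\sqrt{2\pi}$ cancels the prefactor and yields $\widehat\mu(\tau\hat x)$, establishing \eqref{eq:SecondOrderErgo-MLSchroEqu2018} for each fixed $(\tau,\hat x)$.

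The step I expect to be the main obstacle is converting this statement — in which the exceptional null event depends on $(\tau,\hat x)$ — into the claim of the theorem, where a single $\Omega_0$, depending only on $\{K_j\}$, must serve simultaneously for almost every $\tau\hat x\in\R^3$; an uncountable union of the individual null events need not be null. To handle this, writing $Y_j(\omega,\tau,\hat x)$ for the quantity under the limit in \eqref{eq:SecondOrderErgo-MLSchroEqu2018}, I would consider the bad set
\[
B:=\big\{(\omega,\tau,\hat x)\in\Omega\times\R_+\times\mathbb{S}^2\ ;\ Y_j(\omega,\tau,\hat x)\not\to\widehat\mu(\tau\hat x)\ \text{as}\ j\to+\infty\big\}.
\]
For each fixed $(\omega,j)$ the maps $(\tau,\hat x)\mapsto X_{p,q}(K_j,\tau,\hat x)$ and $(\tau,\hat x)\mapsto\widehat\mu(\tau\hat x)$ are continuous, while for each fixed $(\tau,\hat x)$ they are $\mathcal F$-measurable in $\omega$; these Carath\'eodory functions have jointly measurable $\limsup$ and $\liminf$ over $j$, so $B$ is a measurable subset of the product. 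By the first paragraph every $(\tau,\hat x)$-slice of $B$ is $\mathbb{P}$-null, whence Tonelli's theorem gives $(\mathbb{P}\times d\tau\times dS)(B)=0$.

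Slicing $B$ in the complementary direction, there is a $\mathbb{P}$-null event $\Omega_0$, depending only on $\{K_j\}$, such that for every $\omega\notin\Omega_0$ the fibre $B_\omega:=\{(\tau,\hat x):(\omega,\tau,\hat x)\in B\}$ has $(d\tau\,dS)$-measure zero. Finally I would transport this null set to $\R^3$ through the polar parametrization $(\tau,\hat x)\mapsto\tau\hat x$, a diffeomorphism of $(0,\infty)\times\mathbb{S}^2$ onto $\R^3\setminus\{0\}$ with Jacobian $\tau^2$; setting $S_\omega$ to be the image of $B_\omega$ together with the origin, the change-of-variables formula gives $|S_\omega|=0$. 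For $\omega\notin\Omega_0$ and $\tau\hat x\in\R^3\setminus S_\omega$ the identity \eqref{eq:SecondOrderErgo-MLSchroEqu2018} then holds, which completes the argument.
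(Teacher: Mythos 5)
Your proposal is correct and follows essentially the same route as the paper: it substitutes the Neumann expansion to decompose the frequency average into the terms $X_{p,q}$, invokes Lemmas \ref{lemma:LeadingTermErgo-MLSchroEqu2018} and \ref{lemma:HOTErgo-MLSchroEqu2018} at each fixed $(\tau,\hat x)$, checks the constant $4\sqrt{2\pi}\,(2\pi)^{3/2}=(4\pi)^2$, and then upgrades the $(\tau,\hat x)$-pointwise almost-sure convergence to a single null event $\Omega_0$ via a Fubini--Tonelli exchange on a product measure space. The only cosmetic differences are that the paper runs Fubini directly on $\R^3\times\Omega$ with the function $Z(y,\omega)$ (so no polar-coordinate transport is needed), while your version works on $\Omega\times\R_+\times\mathbb{S}^2$ and pushes the null fibre forward under $(\tau,\hat x)\mapsto\tau\hat x$, and your explicit verification of joint measurability of the bad set is a point the paper leaves implicit.
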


The recovery formula presented in \eqref{eq:SecondOrderErgo-MLSchroEqu2018} still involves all the realizations of the random sample $\omega$ due to the presence of the term $\mathbb E (u^\infty\hat x,k))$. To recover $\mu(x)$ by only one realization of the passive scattering measurement, the $\mathbb{E}(u^\infty(\hat x,k))$  should be further relaxed in \eqref{eq:SecondOrderErgo-MLSchroEqu2018}, and this is done by Theorem \ref{lem:sigmaHatRecSingle-MLSchroEqu2018} in the following.

\begin{thm} \label{lem:sigmaHatRecSingle-MLSchroEqu2018}
	Under the same condition as in Theorem \ref{lem:sigmaHatRecErgo-MLSchroEqu2018}, we have
	\begin{equation} \label{eq:sigmaHatRecSingle-MLSchroEqu2018}
	4\sqrt{2\pi} \lim_{j \to +\infty} \frac 1 {K_j} \int_{K_j}^{2K_j} k^m \overline{u^\infty(\hat x,k,\omega)} \cdot u^\infty(\hat x,k+\tau,\omega) \dif{k} = \widehat{\mu} (\tau \hat x),
	\end{equation}
	holds for $\forall \tau \in \R_+$ and $\forall \hat x \in \mathbb{S}^2$ satisfying $\tau \hat x \in \R^3 \backslash S_\omega$.
\end{thm}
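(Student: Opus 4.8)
The plan is to reduce the single-realisation formula \eqref{eq:sigmaHatRecSingle-MLSchroEqu2018} to the already-established centered recovery \eqref{eq:SecondOrderErgo-MLSchroEqu2018} of Theorem~\ref{lem:sigmaHatRecErgo-MLSchroEqu2018}, by showing that dropping the subtraction of $\mathbb{E}(u^\infty(\hat x,k))$ changes the $k$-average only by terms that vanish as $j\to+\infty$. Writing for brevity $v(\hat x,k,\omega):=u^\infty(\hat x,k,\omega)-\mathbb{E}(u^\infty(\hat x,k))$ and $e(\hat x,k):=\mathbb{E}(u^\infty(\hat x,k))$, so that $u^\infty=v+e$, I would expand
\begin{align*}
\overline{u^\infty(\hat x,k,\omega)}\,u^\infty(\hat x,k+\tau,\omega)
&= \overline{v(\hat x,k,\omega)}\,v(\hat x,k+\tau,\omega)
 + \overline{v(\hat x,k,\omega)}\,e(\hat x,k+\tau) \\
&\quad + \overline{e(\hat x,k)}\,v(\hat x,k+\tau,\omega)
 + \overline{e(\hat x,k)}\,e(\hat x,k+\tau).
\end{align*}
After multiplying by $k^m$, averaging over $k\in[K_j,2K_j]$, and letting $j\to+\infty$, the first term is exactly the left-hand side of \eqref{eq:SecondOrderErgo-MLSchroEqu2018} and therefore converges to $\widehat{\mu}(\tau\hat x)/(4\sqrt{2\pi})$ almost surely on $\R^3\setminus S_\omega$. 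It then suffices to prove that the averages of the remaining three terms tend to $0$.

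The decay of the mean field is the decisive input. By Lemma~\ref{lemma:FarFieldGoToZero-MLSchroEqu2018} we have $|e(\hat x,k)|\leq Ck^{-2}$ uniformly in $\hat x$, see \eqref{eq:estn1}. For the pure-mean term this gives
\[
\Big|\frac1{K_j}\int_{K_j}^{2K_j}k^m\,\overline{e(\hat x,k)}\,e(\hat x,k+\tau)\dif{k}\Big|
\leq \frac{C}{K_j}\int_{K_j}^{2K_j}k^{m-4}\dif{k}\lesssim K_j^{m-4}\to 0,
\]
since $m<3<4$. For the two cross terms I would split the weight $k^m=k^{m/2}\cdot k^{m/2}$ and apply the Cauchy--Schwarz inequality in $k$; for instance,
\[
\Big|\frac1{K_j}\int_{K_j}^{2K_j}k^m\,\overline{v(\hat x,k,\omega)}\,e(\hat x,k+\tau)\dif{k}\Big|
\leq \Big(\frac1{K_j}\int_{K_j}^{2K_j}k^m|v(\hat x,k,\omega)|^2\dif{k}\Big)^{1/2}
\Big(\frac1{K_j}\int_{K_j}^{2K_j}k^m|e(\hat x,k+\tau)|^2\dif{k}\Big)^{1/2}.
\]
The second factor is $\leq (CK_j^{m-4})^{1/2}\to0$ by \eqref{eq:estn1}, while the first factor is the $\tau=0$ instance of the self-energy average. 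Using $v=-(F_0+F_1)/(4\pi)$ from \eqref{eq:u1InftyDefn-MLSchroEqu2018}, this average equals $(X_{0,0}+X_{0,1}+X_{1,0}+X_{1,1})/(16\pi^2)$ evaluated at $\tau=0$, which converges almost surely to the finite constant $(2\pi)^{3/2}\widehat{\mu}(0)/(16\pi^2)$ by Lemmas~\ref{lemma:LeadingTermErgo-MLSchroEqu2018} and~\ref{lemma:HOTErgo-MLSchroEqu2018}, hence is bounded in $j$. Thus this cross term vanishes almost surely, and the term $\overline{e(\hat x,k)}\,v(\hat x,k+\tau,\omega)$ is treated symmetrically.

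The main technical point I anticipate is the boundedness of the \emph{shifted} self-energy average $\frac1{K_j}\int_{K_j}^{2K_j}k^m|v(\hat x,k+\tau,\omega)|^2\dif{k}$ entering the symmetric cross term: unlike the $\tau=0$ case, its leading part pairs the weight $k^m$ with $F_0(k+\tau)$ rather than $F_0(k)$, so it is not literally one of the $X_{p,q}(K_j,0,\hat x)$. I would dispose of this either by the elementary comparison $k^m\leq(k+\tau)^m$ followed by the change of variables $s=k+\tau$, or, more cleanly, by rerunning the second-moment plus Chebyshev--Borel--Cantelli argument of Lemmas~\ref{lemma:LeadingTermErgo-MLSchroEqu2018} and~\ref{lemma:HOTErgo-MLSchroEqu2018}, whose asymptotic estimates (Lemmas~\ref{lemma:HOT0-MLSchroEqu2018} and~\ref{lemma:HOT1-MLSchroEqu2018}) are uniform in the spectral shift; both routes rely on $m<3$ to keep the mean-field contribution subcritical. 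Since every exceptional event above is controlled along the \emph{same} sequence $\{K_j\}\in P(m^*+\gamma)$ and on the same null set $S_\omega$ supplied by Theorem~\ref{lem:sigmaHatRecErgo-MLSchroEqu2018}, assembling the four pieces yields \eqref{eq:sigmaHatRecSingle-MLSchroEqu2018} for every $\tau\hat x\in\R^3\setminus S_\omega$, completing the argument.
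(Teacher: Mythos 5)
Your proposal follows essentially the same route as the paper's proof: the same decomposition $u^\infty = (u^\infty - \mathbb{E}u^\infty) + \mathbb{E}u^\infty$ into four averaged terms, identification of the centered term with Theorem~\ref{lem:sigmaHatRecErgo-MLSchroEqu2018}, Cauchy--Schwarz on the cross terms combined with the mean-field decay of Lemma~\ref{lemma:FarFieldGoToZero-MLSchroEqu2018}, and a direct $\mathcal{O}(K_j^{m-4})$ estimate on the pure-mean term. The only notable difference is that you explicitly flag and resolve the shifted self-energy average arising in the term $\overline{e(\hat x,k)}\,v(\hat x,k+\tau,\omega)$, a point the paper dismisses with ``the analysis to $J_{1,0}$ is similar, so we skip the details''; both of your proposed fixes for it are sound.
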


Now Theorem \ref{thm:Unisigma-MLSchroEqu2018} becomes a direct consequence of Theorem \ref{lem:sigmaHatRecSingle-MLSchroEqu2018}. 

\begin{proof}[Proof of Theorem \ref{thm:Unisigma-MLSchroEqu2018}]
	Theorem \ref{lem:sigmaHatRecSingle-MLSchroEqu2018} provides a recovery formula for the local strength $\mu$ by the far-field data $\{ u^{\infty}(\hat{x},k,\omega); \Forall \hat{x} \in \mathbb{S}^2, \Forall k \in \R_+ \}$ with a single fixed $\omega\in\Omega$.
\end{proof}


\subsection{Proofs of the main theorems} \label{subsec:ProofsToMainSteps-MLSchroEqu2018}

In this subsection, we present the proofs of Theorems \ref{lem:sigmaHatRec-MLSchroEqu2018}, \ref{lem:sigmaHatRecErgo-MLSchroEqu2018} and \ref{lem:sigmaHatRecSingle-MLSchroEqu2018}.

\begin{proof}[Proof of Theorem \ref{lem:sigmaHatRec-MLSchroEqu2018}]
	Let $k$ be large enough s.t.~$(I - \Rk V)^{-1} = \sum_{j=0}^{+\infty} (\Rk V)^j$, and let $\tau \in \R_+$. According to the analysis at the beginning of Section \ref{sec:AsyEst-MLSchroEqu2018}, one can compute
	\begin{align}
	& 16\pi^2  \mathbb{E} \big( [ \overline{u^\infty(\hat x,k) - \mathbb{E}u^\infty(\hat x,k)} ] [ u^\infty(\hat x,k+\tau) - u^\infty(\hat x,k)] \big) \nonumber\\
	= & \ \sum_{j,\ell = 0,1} \mathbb{E} \big( \overline{F_\ell(k,\hat x)}  F_j(k+\tau,\hat x) \big) =: I_{0,0} + I_{0,1} + I_{1,0} + I_{1,1}. \label{eq:Thm1uI-MLSchroEqu2018}
	\end{align}
	From Lemmas \ref{lemma:HOT0-MLSchroEqu2018} and \ref{lemma:HOT1-MLSchroEqu2018}, we have that
	$I_{0,1}$, $I_{1,0}$, $I_{1,1}$ are all of the order no less than $k^{-3}$, and hence
	\begin{equation} \label{eq:u1Infty-MLSchroEqu2018}
	16\pi^2  \mathbb{E} \big( [ \overline{u^\infty(\hat x,k) - \mathbb{E}u^\infty(\hat x,k)} ] [ u^\infty(\hat x,k+\tau) - u^\infty(\hat x,k)] \big) = k^m I_{0,0} + \mathcal{O}(k^{m-3}),
	\end{equation}
	as $k$ goes to infinity.
	Then, \eqref{eq:I0-MLSchroEqu2018} gives
	\begin{align*}
	I_{0,0} & = \mathbb{E} \big( \overline{F_0(k,\hat x)} F_0(k+\tau,\hat x) \big)
	= (2\pi)^{3/2}\, \widehat{\mu} (\tau \hat x) k^{-m} + \int_{\mathcal D} a(y,k \hat x) e^{i\tau \hat x \cdot y} \dif{y}.
	\end{align*}
	The symbol $a$ is of order $-m-1$, and thus
	\begin{equation} \label{eq:aEst-MLSchroEqu2018}
		\big| \int_{\mathcal D} a(y,k \hat x) e^{i\tau \hat x \cdot y} \dif{y} \big| \leq |\mathcal D| \cdot |a(y,k \hat x)| \leq |\mathcal D| C \agl[k \hat x]^{-m-1} = |\mathcal D| C \agl[k]^{-m-1}.
	\end{equation}
	From \eqref{eq:aEst-MLSchroEqu2018} we obtain
	\begin{equation} \label{eq:I0Est-MLSchroEqu2018}
	k^m I_{0,0} = \mathbb{E} \big( k^m \overline{F_0(k,\hat x)} F_0(k+\tau,\hat x) \big) = (2\pi)^{3/2}\, \widehat{\mu} (\tau \hat x) + \mathcal{O}(k^{-1}), \quad k \to +\infty.
	\end{equation}
	Formulae \eqref{eq:u1Infty-MLSchroEqu2018} and \eqref{eq:I0Est-MLSchroEqu2018} give 
	{\small\begin{equation} \label{eq:sigmaHatRecZeroMeanInter-MLSchroEqu2018}
		4 \sqrt{2\pi} \mathbb{E} \big( [ \overline{u^\infty(\hat x,k) - \mathbb{E}u^\infty(\hat x,k)} ] [ u^\infty(\hat x,k+\tau) - u^\infty(\hat x,k)] \big) = \widehat{\mu}(\tau \hat x) + \mathcal{O}(k^{m-3}) + \mathcal{O}(k^{-1}),
		\end{equation}}
	as $k$ goes to infinity.
	Noting that $m \in (1,3)$, \eqref{eq:sigmaHatRecZeroMeanInter-MLSchroEqu2018} immediately implies \eqref{eq:sigmaHatRec-MLSchroEqu2018}.
\end{proof}

\begin{proof}[Proof of Theorem \ref{lem:sigmaHatRecErgo-MLSchroEqu2018}]
	For convenience, we denote the averaging operation with respect to $k$ as $\mathcal{E}_k$, i.e.~${{\mathcal{E}_k f} = \frac 1 K \int_K^{2K} f(k) \dif{k}}$. 
	Similar to \eqref{eq:Thm1uI-MLSchroEqu2018}, we have
	\begin{align}
	& 16\pi^2 \mathcal{E}_k \big( k^m [ \overline{u^\infty(\hat x,k) - \mathbb{E}u^\infty(\hat x,k+\tau)} ] [ u^\infty(\hat x,k+\tau) - \mathbb{E} u^\infty(\hat x,k+\tau) ] \big) \nonumber\\
	= & \sum_{j,\ell = 0,1} \mathcal{E}_k \big( k^m \overline{F_\ell(k,\hat x)} F_j(k+\tau,\hat x) \big) =: X_{0,0} + X_{0,1}+ X_{1,0} + X_{1,1}. \label{eq:Thm2uX-MLSchroEqu2018}
	\end{align}
	Recall that $\{K_j\} \in P(m^*+\gamma)$.
	For $\forall \tau \geq 0$ and $\forall \hat x \in \mathbb{S}^2$, Lemma \ref{lemma:LeadingTermErgo-MLSchroEqu2018} implies that $\exists \Omega_{\tau,\hat x}^{0,0} \subset \Omega \colon \mathbb{P}(\Omega_{\tau,\hat x}^{0,0}) = 0$, $\Omega_{\tau,\hat x}^{0,0}$ depending on $\tau$ and $\hat x$, such that
	\begin{equation} \label{eq:Thm2X00-MLSchroEqu2018}
	\lim_{j \to +\infty} X_{0,0}(K_j,\tau,\hat x) = (2\pi)^{3/2} \widehat{\mu} (\tau \hat x), \quad \forall \omega \in \Omega \backslash \Omega_{\tau,\hat x}^{0,0}.
	\end{equation}
	Lemma \ref{lemma:HOTErgo-MLSchroEqu2018} implies the existence of the sets $\Omega_{\tau,\hat x}^{p,q} ~\big( (p,q) \in \{ (0,1),\, (1,0),\, (1,1) \} \big)$ with zero probability measures such that $\forall \tau \geq 0$ and $\forall \hat x \in \mathbb{S}^2$,
	\begin{equation} \label{eq:Thm2Xpq-MLSchroEqu2018}
	\lim_{j \to +\infty} X_{p,q}(K_j,\tau,\hat x) = 0, \quad \forall \omega \in \Omega \backslash \Omega_{\tau,\hat x}^{p,q}.
	\end{equation}
	for all $(p,q) \in \{ (0,1),\, (1,0),\, (1,1) \}$. Write $\Omega_{\tau,\hat x} = \bigcup_{p,q = 0,1} \Omega_{\tau,\hat x}^{p,q}$\,, then $\mathbb{P} (\Omega_{\tau,\hat x}) = 0$. 
	From Lemmas \ref{lemma:LeadingTermErgo-MLSchroEqu2018} and \ref{lemma:HOTErgo-MLSchroEqu2018} we note that $\Omega_{\tau,\hat x}^{p,q}$ also depends on $K_j$, so does $\Omega_{\tau,\hat x}$, but we omit this dependence in the notation. Write
	$$Z(\tau\hat x,\omega) := \lim_{j \to +\infty}  \frac {16\pi^2} {K_j} \int_{K_j}^{2K_j} k^m \overline{u^\infty(\hat x,k)} u^\infty(\hat x,k+\tau) \dif{k} - (2\pi)^{3/2} \widehat{\mu} (\tau \hat x)$$
	for short. 
	By \eqref{eq:Thm2uX-MLSchroEqu2018}, \eqref{eq:Thm2X00-MLSchroEqu2018} and \eqref{eq:Thm2Xpq-MLSchroEqu2018}, we conclude that
	\begin{equation} \label{eq:SecondOrderErgo2-MLSchroEqu2018}
	\Forall y \in \R^3, \Exists \Omega_y \subset \Omega \colon \mathbb P (\Omega_y) = 0, \st \forall\, \omega \in \Omega \backslash \Omega_y,\, Z(y,\omega) = 0.
	\end{equation}
	
	To conclude \eqref{eq:SecondOrderErgo-MLSchroEqu2018} from \eqref{eq:SecondOrderErgo2-MLSchroEqu2018}, we need to exchange the order between $y$ and $\omega$.
	To achieve this, we utilize the Fubini's Theorem.
	Denote the usual Lebesgue measure on $\R^3$ as $\mathbb L$ and the product measure $\mathbb L \times \mathbb P$ as $\mu$, and construct the product measure space $\mathbb M := (\R^3 \times \Omega, \mathcal G, \mu)$ in the canonical way, where $\mathcal G$ is the corresponding complete $\sigma$-algebra.
	Write
	\[
	\mathcal{A} := \{ (y,\omega) \in \R^3 \times \Omega \,;\, Z(y, \omega) \neq 0 \}.
	\]
	Set $\chi_\mathcal{A}$ as the characteristic function of $\mathcal{A}$ in $\mathbb M$. By \eqref{eq:SecondOrderErgo2-MLSchroEqu2018} we obtain
	\begin{equation} \label{eq:FubiniEq0-MLSchroEqu2018}
	\int_{\R^3} \big( \int_\Omega \chi_{\mathcal A}(y,\omega) \dif{\mathbb P(\omega)} \big) \dif{\mathbb L(y)} = 0.
	\end{equation}
	By \eqref{eq:FubiniEq0-MLSchroEqu2018} and \cite[Corollary 7 in Section 20.1]{royden2000real}, we obtain
	\begin{equation} \label{eq:FubiniEq1-MLSchroEqu2018}
	\int_{\mathbb M} \chi_{\mathcal A}(y,\omega) \dif{\mathbb \mu} = \int_\Omega \big( \int_{\R^3} \chi_{\mathcal A}(y,\omega) \dif{\mathbb L(y)} \big) \dif{\mathbb P(\omega)} = 0.
	\end{equation}
	Because $\chi_{\mathcal A}(y,\omega)$ is nonnegative, \eqref{eq:FubiniEq1-MLSchroEqu2018} implies
	\begin{equation} \label{eq:FubiniEq2-MLSchroEqu2018}
	\Exists \Omega_0 \colon \mathbb P (\Omega_0) = 0, \st \forall\, \omega \in \Omega \backslash \Omega_0,\, \int_{\R^3} \chi_{\mathcal A}(y,\omega) \dif{\mathbb L(y)} = 0.
	\end{equation}
	Formula \eqref{eq:FubiniEq2-MLSchroEqu2018} further implies for every $\omega \in \Omega \backslash \Omega_0$,
	\begin{equation} \label{eq:FubiniEq3-MLSchroEqu2018}
	\Exists S_\omega \subset \R^3 \colon \mathbb L (S_\omega) = 0, \st \forall\, y \in \R^3 \backslash S_\omega,\, Z(y,\omega) = 0.
	\end{equation}
	This is \eqref{eq:SecondOrderErgo-MLSchroEqu2018}.
	The proof is complete.
\end{proof}

\begin{proof}[Proof of Theorem \ref{lem:sigmaHatRecSingle-MLSchroEqu2018}]
	Let $\mathcal{E}_k$ be the averaging operator as defined in the proof of Theorem \ref{lem:sigmaHatRecErgo-MLSchroEqu2018}. 
	For convenience, we denote $u_0^\infty(\hat x,k) = u^\infty(\hat x,k) - \mathbb{E}u^\infty(\hat x,k)$, we write $u_1^\infty(\hat x,k) = \mathbb{E}u^\infty(\hat x,k)$, thus $u^\infty = u_0^\infty + u_1^\infty$.
	And we have
	\begin{align*}
	& \ 16\pi^2 \mathcal{E}_k \big( k^m \overline{u^\infty(\hat x,k)} u^\infty(\hat x,k+\tau) \big) = 16\pi^2 \sum_{p,q = 0,1} \mathcal{E}_k \big( k^m \overline{u_p^\infty(\hat x,k)} u_q^\infty(\hat x,k+\tau) \big) \\
	=: & \ J_{0,0} + J_{0,1} + J_{1,0} + J_{1,1}.
	\end{align*}
	From Theorem \ref{lem:sigmaHatRecErgo-MLSchroEqu2018} we obtain
	\begin{equation} \label{eq:J0-MLSchroEqu2018}
	\begin{split}
	& \lim_{j \to +\infty} J_{0,0} = 16\pi^2 \lim_{j \to +\infty} \int_{K_j}^{2K_j} k^m \overline{u_0^\infty(\hat x,k)} \cdot u_0^\infty(\hat x,k+\tau) \dif{k} = (2\pi)^{3/2} \widehat{\mu} (\tau \hat x), \\
	& \quad \tau \hat x \aee \! \in \R^3, \quad \omega \ass \! \in \Omega.
	\end{split}
	\end{equation}
	
	Then we study $J_{0,1}$,
	\begin{align}
	|J_{0,1}|^2
	& \simeq \big| \mathcal{E}_k \big( k^m \overline{u_0^\infty(\hat x,k)} u_1^\infty(\hat x,k+\tau) \big) \big|^2
	= \big| \frac 1 {K_j} \int_{K_j}^{2K_j} k^m \overline{u_0^\infty(\hat x,k)} u_1^\infty(\hat x,k+\tau) \dif{k} \big|^2 \nonumber\\
	& \leq \frac 1 {K_j} \int_{K_j}^{2K_j} k^m |u_0^\infty(\hat x,k)|^2 \dif{k} \cdot \frac 1 {K_j} \int_{K_j}^{2K_j} k^m |u_1^\infty(\hat x,k+\tau)|^2 \dif{k}. \label{eq:J1One-MLSchroEqu2018}
	\end{align}
	Combining \eqref{eq:J1One-MLSchroEqu2018} with Theorem \ref{lem:sigmaHatRecErgo-MLSchroEqu2018} and Lemma \ref{lemma:FarFieldGoToZero-MLSchroEqu2018}, we obtain
	\begin{equation} \label{eq:J1-MLSchroEqu2018}
	|J_{1,2}|^2 \lesssim (\widehat{\sigma^2}(0) + o(1)) \cdot \mathcal O(k^{m-4}) = o(1) \to 0, \quad j \to +\infty.
	\end{equation}
	The analysis to $J_{1,0}$ is similar to that of $J_{0,1}$, so we skip the details.
	
	Finally we analyze $J_{1,1}$. By Lemma \ref{lemma:FarFieldGoToZero-MLSchroEqu2018}, we have
	\begin{align}
	|J_{1,1}|^2
	& \simeq \big| \mathcal{E}_k \big( k^m \overline{u_1^\infty(\hat x,k)} u_1^\infty(\hat x,k+\tau) \big) \big|^2
	= \big| \frac 1 {K_j} \int_{K_j}^{2K_j} k^m  \overline{u_1^\infty(\hat x,k)} u_1^\infty(\hat x,k+\tau) \dif{k} \big|^2 \nonumber\\
	& \leq \frac 1 {K_j} \int_{K_j}^{2K_j} k^m |u_1^\infty(\hat x,k)|^2 \dif{k} \cdot \frac 1 {K_j} \int_{K_j}^{2K_j} k^m |u_1^\infty(\hat x,k+\tau)|^2 \dif{k} \nonumber\\
	& \leq \frac 1 {K_j} \int_{K_j}^{2K_j} k^m \sup_{\kappa \geq K_j} \big| u_1^\infty(\hat x,\kappa) \big|^2 \dif{k} \cdot \frac 1 {K_j} \int_{K_j}^{2K_j} k^m \sup_{\kappa \geq K_j+\tau} \big| u_1^\infty(\hat x,\kappa) \big|^2 \dif{k} \nonumber\\
	& = (2 K_j)^m \sup_{\kappa \geq K_j} |u_1^\infty(\hat x,\kappa)|^2 \cdot \sup_{\kappa \geq K_j+\tau} |u_1^\infty(\hat x,\kappa)|^2 \to 0, \quad j \to +\infty. \label{eq:J3-MLSchroEqu2018}
	\end{align}
	Combining \eqref{eq:J0-MLSchroEqu2018}, \eqref{eq:J1-MLSchroEqu2018} and \eqref{eq:J3-MLSchroEqu2018}, we can conclude \eqref{eq:sigmaHatRecSingle-MLSchroEqu2018}.
	The proof is complete. 
\end{proof}

%
%

\section*{Acknowledgements}
The work of H.~Liu was supported by Hong Kong RGC general research funds (projects 11311122, 12301420 and 11300821).

\section*{Statements and Declarations}

There are no competing interests of the authors to disclose. The authors contribute equally to this piece of theoretical work.




{

\begin{bibdiv}
	\begin{biblist}
		
		\bib{bao2016inverse}{article}{
			author={Bao, Gang},
			author={Chen, Chuchu},
			author={Li, Peijun},
			title={Inverse random source scattering problems in several dimensions},
			date={2016},
			journal={SIAM/ASA J. Uncertain. Quantif.},
			volume={4},
			number={1},
			pages={1263\ndash 1287},
		}
		
		\bib{Bsource}{article}{
			author={Bl{\aa}sten, Eemeli},
			title={Nonradiating sources and transmission eigenfunctions vanish at
				corners and edges},
			date={2018},
			journal={SIAM J. Math. Anal.},
			volume={50},
			number={6},
			pages={6255\ndash 6270},
		}
		
		\bib{BL2018}{article}{
			author={Bl{\aa}sten, Emilia},
			author={Liu, Hongyu},
			title={Scattering by curvatures, radiationless sources, transmission
				eigenfunctions, and inverse scattering problems},
			date={2021},
			ISSN={0036-1410},
			journal={SIAM J. Math. Anal.},
			volume={53},
			number={4},
			pages={3801\ndash 3837},
			url={https://doi.org/10.1137/20M1384002},
			review={\MR{4283699}},
		}
		
		\bib{blomgren2002}{article}{
			author={Blomgren, Peter},
			author={Papanicolaou, George},
			author={Zhao, Hongkai},
			title={Super-resolution in time-reversal acoustics},
			date={2002},
			journal={The Journal of the Acoustical Society of America},
			volume={111},
			number={1},
			pages={230\ndash 248},
		}
		
		\bib{borcea2006}{article}{
			author={Borcea, Liliana},
			author={Papanicolaou, George},
			author={Tsogka, Chrysoula},
			title={Adaptive interferometric imaging in clutter and optimal
				illumination},
			date={2006},
			journal={Inverse Probl.},
			volume={22},
			number={4},
			pages={1405},
		}
		
		\bib{borcea2002}{article}{
			author={Borcea, Liliana},
			author={Papanicolaou, George},
			author={Tsogka, Chrysoula},
			author={Berryman, James},
			title={Imaging and time reversal in random media},
			date={2002},
			journal={Inverse Probl.},
			volume={18},
			number={5},
			pages={1247},
		}
		
		\bib{caro2016inverse}{article}{
			author={Caro, Pedro},
			author={Helin, Tapio},
			author={Lassas, Matti},
			title={Inverse scattering for a random potential},
			date={2019},
			journal={Anal. Appl.},
			volume={17},
			number={4},
			pages={513\ndash 567},
		}
		
		\bib{ClaKli}{article}{
			author={Clason, Christian},
			author={Klibanov, Michael~V},
			title={The quasi-reversibility method for thermoacoustic tomography in a
				heterogeneous medium},
			date={2007},
			journal={SIAM J. Sci. Comput.},
			volume={30},
			number={1},
			pages={1\ndash 23},
		}
		
		\bib{colton2012inverse}{book}{
			author={Colton, David},
			author={Kress, Rainer},
			title={Inverse acoustic and electromagnetic scattering theory},
			series={Applied Mathematical Sciences},
			publisher={Springer, Cham},
			date={[2019] \copyright 2019},
			volume={93},
			ISBN={978-3-030-30350-1; 978-3-030-30351-8},
			url={https://doi.org/10.1007/978-3-030-30351-8},
			note={Fourth edition of [ MR1183732]},
			review={\MR{3971246}},
		}
		
		\bib{Deng2019onident}{article}{
			author={Deng, Youjun},
			author={Li, Jinhong},
			author={Liu, Hongyu},
			title={On identifying magnetized anomalies using geomagnetic
				monitoring},
			date={2019},
			ISSN={0003-9527},
			journal={Arch. Ration. Mech. Anal.},
			volume={231},
			number={1},
			pages={153\ndash 187},
			url={https://doi.org/10.1007/s00205-018-1276-7},
		}
		
		\bib{Deng2020onident}{article}{
			author={Deng, Youjun},
			author={Li, Jinhong},
			author={Liu, Hongyu},
			title={On identifying magnetized anomalies using geomagnetic monitoring
				within a magnetohydrodynamic model},
			date={2020},
			ISSN={0003-9527},
			journal={Arch. Ration. Mech. Anal.},
			volume={235},
			number={1},
			pages={691\ndash 721},
			url={https://doi.org/10.1007/s00205-019-01429-x},
		}
		
		\bib{Deng2019Onan}{article}{
			author={Deng, Youjun},
			author={Liu, Hongyu},
			author={Uhlmann, Gunther},
			title={On an inverse boundary problem arising in brain imaging},
			date={2019},
			ISSN={0022-0396},
			journal={J. Differential Equations},
			volume={267},
			number={4},
			pages={2471\ndash 2502},
			url={https://doi.org/10.1016/j.jde.2019.03.019},
		}
		
		\bib{eskin2011lectures}{book}{
			author={Eskin, Gregory},
			title={Lectures on linear partial differential equations},
			series={Graduate Studies in Mathematics},
			publisher={American Mathematical Society, Providence, RI},
			date={2011},
			volume={123},
			ISBN={978-0-8218-5284-2},
			url={https://doi.org/10.1090/gsm/123},
			review={\MR{2809923}},
		}
		
		\bib{griffiths2016introduction}{book}{
			author={Griffiths, D.~J.},
			title={Introduction to quantum mechanics},
			publisher={Cambridge Univ. Press},
			address={Cambridge},
			date={2016},
		}
		
		\bib{hormander1985analysisI}{book}{
			author={H\"{o}rmander, Lars},
			title={The analysis of linear partial differential operators. {I}},
			series={Classics in Mathematics},
			publisher={Springer-Verlag, Berlin},
			date={2003},
			ISBN={3-540-00662-1},
			url={https://doi.org/10.1007/978-3-642-61497-2},
			note={Distribution theory and Fourier analysis, Reprint of the second
				(1990) edition [Springer, Berlin; MR1065993 (91m:35001a)]},
			review={\MR{1996773}},
		}
		
		\bib{hormander1985analysisIII}{book}{
			author={H\"{o}rmander, Lars},
			title={The analysis of linear partial differential operators. {III}},
			series={Classics in Mathematics},
			publisher={Springer, Berlin},
			date={2007},
			ISBN={978-3-540-49937-4},
			url={https://doi.org/10.1007/978-3-540-49938-1},
			note={Pseudo-differential operators, Reprint of the 1994 edition},
			review={\MR{2304165}},
		}
		
		\bib{Klibanov2013}{article}{
			author={Klibanov, M.},
			title={Thermoacoustic tomography with an arbitrary elliptic operator},
			date={2013},
			journal={Inverse Probl.},
			volume={29},
			pages={025014},
		}
		
		\bib{KM1}{article}{
			author={Knox, Christina},
			author={Moradifam, Amir},
			title={Determining both the source of a wave and its speed in a medium
				from boundary measurements},
			date={2020},
			ISSN={0266-5611},
			journal={Inverse Probl.},
			volume={36},
			number={2},
			pages={025002, 15},
			url={https://doi.org/10.1088/1361-6420/ab53fc},
			review={\MR{4063195}},
		}
		
		\bib{LassasA}{article}{
			author={Lassas, M.},
			author={P{\"{a}}iv{\"{a}}rinta, L.},
			author={Saksman, E.},
			title={Inverse problem for a random potential},
			date={2004},
			journal={Contemp. Math. Amer. Math. Soc., Providence, RI.},
			volume={362},
		}
		
		\bib{Lassas2008}{article}{
			author={Lassas, M.},
			author={P{\"{a}}iv{\"{a}}rinta, L.},
			author={Saksman, E.},
			title={Inverse scattering problem for a two dimensional random
				potential},
			date={2008},
			journal={Comm. Math. Phys.},
			volume={279},
			pages={669\ndash 703},
		}
		
		\bib{LiHelinLiinverse2018}{article}{
			author={Li, Jianliang},
			author={Helin, Tapio},
			author={Li, Peijun},
			title={Inverse random source problems for time-harmonic acoustic and
				elastic waves},
			date={2020},
			ISSN={0360-5302},
			journal={Comm. Partial Differential Equations},
			volume={45},
			number={10},
			pages={1335\ndash 1380},
			url={https://doi.org/10.1080/03605302.2020.1774895},
			review={\MR{4160439}},
		}
		
		\bib{LiLiinverse2018}{article}{
			author={Li, Jianliang},
			author={Li, Peijun},
			title={Inverse elastic scattering for a random source},
			date={2019},
			ISSN={0036-1410},
			journal={SIAM J. Math. Anal.},
			volume={51},
			number={6},
			pages={4570\ndash 4603},
			url={https://doi.org/10.1137/18M1235119},
			review={\MR{4029816}},
		}
		
		\bib{HLSM2019determining}{article}{
			author={Li, Jingzhi},
			author={Liu, Hongyu},
			author={Ma, Shiqi},
			title={Determining a random {S}chr\"{o}dinger equation with unknown
				source and potential},
			date={2019},
			ISSN={0036-1410},
			journal={SIAM J. Math. Anal.},
			volume={51},
			number={4},
			pages={3465\ndash 3491},
			url={https://doi.org/10.1137/18M1225276},
			review={\MR{3995040}},
		}
		
		\bib{HLSM2019both}{article}{
			author={Li, Jingzhi},
			author={Liu, Hongyu},
			author={Ma, Shiqi},
			title={Determining a {R}andom {S}chr\"{o}dinger {O}perator: {B}oth
				{P}otential and {S}ource are {R}andom},
			date={2021},
			ISSN={0010-3616},
			journal={Comm. Math. Phys.},
			volume={381},
			number={2},
			pages={527\ndash 556},
			url={https://doi.org/10.1007/s00220-020-03889-9},
			review={\MR{4207450}},
		}
		
		\bib{liu2015determining}{article}{
			author={Liu, Hongyu},
			author={Uhlmann, Gunther},
			title={Determining both sound speed and internal source in thermo-and
				photo-acoustic tomography},
			date={2015},
			journal={Inverse Probl.},
			volume={31},
			number={10},
			pages={105005},
		}
		
		\bib{Lu1}{article}{
			author={L\"u, Q.},
			author={Zhang, X},
			title={Global uniqueness for an inverse stochastic hyperbolic problem
				with three unknowns},
			date={2015},
			journal={Comm. Pure Appl. Math.},
			volume={68},
			pages={948\ndash 963},
		}
		
		\bib{SM2020onrecent}{article}{
			author={Ma, Shiqi},
			title={On recent progress of single-realization recoveries of random
				{S}chr\"{o}dinger systems},
			date={2021},
			journal={Electron. Res. Arch.},
			volume={29},
			number={3},
			pages={2391\ndash 2415},
			url={https://doi.org/10.3934/era.2020121},
			review={\MR{4256480}},
		}
		
		\bib{royden2000real}{book}{
			author={Royden, H.~L.},
			author={Fitzpatrick, P.~M.},
			title={Real analysis},
			publisher={Prentice Hall},
			date={2010},
			ISBN={9780131437470},
		}
		
		\bib{uhlmann2013inverse}{book}{
			editor={Uhlmann, Gunther},
			title={Inverse problems and applications: inside out. {II}},
			series={Mathematical Sciences Research Institute Publications},
			publisher={Cambridge University Press, Cambridge},
			date={2013},
			volume={60},
			ISBN={978-1-107-03201-9},
		}
		
		\bib{WangGuo17}{article}{
			author={Wang, X.},
			author={Guo, Y.},
			author={Zhang, D.},
			author={Liu, H.},
			title={Fourier method for recovering acoustic sources from
				multi-frequency far-field data},
			date={2017},
			journal={Inverse Probl.},
			volume={33},
			pages={035001},
		}
		
		\bib{wong2014pdo}{book}{
			author={Wong, M.~W.},
			title={An introduction to pseudo-differential operators},
			edition={Third},
			series={Series on Analysis, Applications and Computation},
			publisher={World Scientific Publishing Co. Pte. Ltd., Hackensack, NJ},
			date={2014},
			volume={6},
			ISBN={978-981-4583-08-4},
			url={https://doi.org/10.1142/9074},
			review={\MR{3222682}},
		}
		
		\bib{Yuan1}{article}{
			author={Yuan, Ganghua},
			title={Determination of two kinds of sources simultaneously for a
				stochastic wave equation},
			date={2015},
			ISSN={0266-5611},
			journal={Inverse Probl.},
			volume={31},
			number={8},
			pages={085003, 13},
			url={https://doi.org/10.1088/0266-5611/31/8/085003},
			review={\MR{3377100}},
		}
		
		\bib{Zhang2015}{article}{
			author={Zhang, Deyue},
			author={Guo, Yukun},
			title={Fourier method for solving the multi-frequency inverse source
				problem for the {H}elmholtz equation},
			date={2015},
			ISSN={0266-5611},
			journal={Inverse Probl.},
			volume={31},
			number={3},
			pages={035007, 30},
			url={https://doi.org/10.1088/0266-5611/31/3/035007},
			review={\MR{3319373}},
		}
		
		\bib{zw2012semi}{book}{
			author={Zworski, Maciej},
			title={Semiclassical analysis},
			series={Graduate Studies in Mathematics},
			publisher={American Mathematical Society, Providence, RI},
			date={2012},
			volume={138},
			ISBN={978-0-8218-8320-4},
			url={https://doi.org/10.1090/gsm/138},
			review={\MR{2952218}},
		}
		
	\end{biblist}
\end{bibdiv}

}


\end{document}